\newtheorem{thm}{Theorem}
\newtheorem{defn}{Definition}
\newtheorem{fact}{Fact}
\newtheorem{lem}[thm]{Lemma}
\newtheorem{prop}[thm]{Proposition}
\newtheorem{cor}[thm]{Corollary}
\newtheorem{question}{Question}
\newtheorem*{kthm}{Kronecker's Theorem}
\newcommand{\up}{\upharpoonright}
\newcommand{\ra}{\rightarrow}
\newcommand{\mc}{\mathcal}
\newcommand{\ms}{\mathscr}
\newcommand{\leqnomode}{\tagsleft@true}
\newcommand{\reqnomode}{\tagsleft@false}
\begin{document}

  \title{L vector spaces and L fields}
  \author{Yinhe Peng}
  \address{Academy of Mathematics and Systems Science, Chinese Academy of Sciences\\ East Zhong Guan Cun Road No. 55\\Beijing 100190\\China}
\email{pengyinhe@amss.ac.cn}

  \author{Liuzhen Wu}
  \address{Academy of Mathematics and Systems Science, Chinese Academy of Sciences\\ East Zhong Guan Cun Road No. 55\\Beijing 100190\\China}
\email{lzwu@math.ac.cn}

\thanks{Peng was partially supported by NSFC No. 11901562 and a program of the Chinese Academy of Sciences. Wu was partially supported by NSFC No. 11871464.}

\subjclass[2010]{54H13, 54D20, 03E02, 03E75}
\keywords{minimal walk,  topological vector space, topological field,  L space, L vector space, L field}
  \maketitle
  \begin{abstract}
  We construct in ZFC an L topological vector space---a topological vector space that is an L space---and an L field---a topological field that is an L space. This generalizes  results in \cite{moore06} and \cite{pw}.

  \end{abstract}
  \section{Introduction}
  One   important and interesting problem in set-theoretic topology is the topological basis problem.
  \begin{question}\label{basisp}
  Which class of topological spaces has a 3-element basis?
  \end{question}
  
  Moore constructed in \cite{moore06} an \emph{L space}---a hereditarily Lindel\"of regular space that is not separable---and showed that Question \ref{basisp} has a negative answer in the class of regular spaces. We constructed in \cite{pw} an \emph{L group}---a topological group that is an L space---and showed that Question \ref{basisp} has a negative answer in the class of topological groups.
  
 It is natural to expect that some strengthened   algebraic property may lead to a positive answer to Question \ref{basisp}. E.g., does Question \ref{basisp} have a positive answer in the class of topological fields or topological vector spaces? 
 
 One example that stronger algebraic property makes differences is the metrization of pseudo-compact spaces.  Shakhmatov proved that every pesudo-compact subspace of a topological field is metrizable. But the situation for topological groups is different since every completely regular space can be embedded into a topological group---$\mathbb{R}^\kappa$.\medskip

  The importance of stronger algebraic properties has been realized quite early. For example,   the following  are Question 3.7 and Question 3.8 in \cite{shak}.
    \begin{question}[\cite{shak}]\label{q1}
    Let $\mc{P}$ be one of the following properties. Is there a topological vector space $L$ with property $\mc{P}$ such that $L\times L$ does not have property $\mc{P}$? Or is there such a topological field?
  \begin{enumerate}[(i)]
  \item normality;
  \item paracompactness;
  \item Lindel\"ofness.
  \end{enumerate}
  \end{question}

  \begin{question}[\cite{shak}]\label{q2}
  \begin{enumerate}[(i)]
  \item Is there a Lindel\"of topological vector space $L$ such that $L^2$ is not normal?
  \item Is there a Lindel\"of topological field $P$ such that $P^2$ is not normal?
  \end{enumerate}
  \end{question}
  
  We answer Question \ref{q1} and Question \ref{q2} positively by constructing  examples in ZFC   that are in addition  L spaces. These examples also answer Question \ref{basisp} negatively in the class of topological vector spaces and in the class of topological fields.
  
  \begin{thm}\label{thm1}
  There is an L vector space whose square is neither normal nor weakly paracompact.
  \end{thm}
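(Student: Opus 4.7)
The plan is to equip a suitable $\mathbb{Q}$-vector space with a topological vector space topology derived from the minimal-walk machinery underlying Moore's L space \cite{moore06} and its group analogue \cite{pw}. I would take $V=\bigoplus_{\alpha<\omega_1}\mathbb{Q}\cdot e_\alpha$, the free $\mathbb{Q}$-vector space on $\omega_1$, fix a $C$-sequence on $\omega_1$ together with Todor\v{c}evi\'{c}'s oscillation map, and use these to select a decreasing sequence $(F_n)_{n<\omega}$ of ``admissible'' finite-support vectors whose supports are sufficiently spread out with respect to the walk. Then $U_n$ is defined as the balanced, $\mathbb{Q}$-convex, absorbing hull of $F_n$, arranged so that $U_{n+1}+U_{n+1}\subseteq U_n$ and $\bigcap_n U_n=\{0\}$; by the standard recipe these sets form a neighborhood base at $0$ for a Hausdorff vector topology $\tau$ on $V$.

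To show that $(V,\tau)$ is an L space, I would verify hereditary Lindel\"ofness and non-separability combinatorially. Non-separability is easier: for any countable $D\subseteq V$ the supports of elements of $D$ are bounded in $\omega_1$, so by the choice of the $F_n$ one obtains $e_\alpha\notin\overline{D}$ for cofinally many $\alpha$. Hereditary Lindel\"ofness reduces to showing that for every uncountable $X\subseteq V$ and every $n$, some $x\in X$ has $x+U_n$ meeting $X$ uncountably often. Applying a $\Delta$-system lemma to the supports of vectors in $X$ converts this into an oscillation statement that can be handled by the core combinatorial lemma of \cite{moore06}, now lifted to the vector setting.

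For the square, I would find in $V\times V$ a configuration drawn from an uncountable diagonal together with a suitably ``shifted'' uncountable set whose oscillation pattern forbids separation. Rectangle-style oscillation arguments should then produce two disjoint uncountable closed subsets of $V\times V$ admitting no separating pair of disjoint open sets, yielding non-normality, while essentially the same configuration furnishes an open cover of $V\times V$ with no point-finite open refinement, yielding failure of weak paracompactness. The principal obstacle throughout is that continuity of scalar multiplication forces the $U_n$ to be balanced and absorbing, which inflates them considerably relative to the group neighborhoods of \cite{pw}. Ensuring simultaneously that these enlarged sets still witness non-separability of $\{e_\alpha:\alpha<\omega_1\}$, and that the oscillation information survives the balanced--convex--absorbing closure, is the main technical challenge.
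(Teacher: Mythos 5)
Your plan diverges from the paper's construction in a way that is not merely cosmetic but fatal.  The paper does not build a neighborhood base at $0$ from scratch.  Instead it fixes rationally independent reals $\{\theta_\alpha:\alpha<\omega_1\}$, defines for each $\beta<\omega_1$ a point $w_\beta\in\mathbb{R}^{\omega_1}$ by $w_\beta(\alpha)=f\bigl(frac(\theta_\alpha\cdot osc(\alpha,\beta)+\theta_\beta)\bigr)$ for $\alpha<\beta$ (with $f(x)=\tfrac1x\sin\tfrac1x$, $w_\beta(\beta)=\theta_\beta$, $w_\beta(\alpha)=0$ above $\beta$), and then takes $vec(\mathscr{L})$ to be the linear span of $\mathscr{L}=\{w_\beta:\beta<\omega_1\}$ inside $\mathbb{R}^{\omega_1}$ with the product topology.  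Because $\mathbb{R}^{\omega_1}$ with coordinatewise operations is already a topological vector space over $\mathbb{R}$, the subspace $vec(\mathscr{L})$ inherits a TVS structure with no further work, and all of the combinatorics goes into showing that this particular uncountable family is hereditarily Lindel\"of, non-separable, and has a bad square.  This is also the scheme used for the L group in the earlier paper you cite, so your reading of that prior work is off as well.

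The gap in your proposal is structural and cannot be repaired within your framework.  You propose a decreasing sequence $U_0\supset U_1\supset\cdots$ of balanced, convex, absorbing sets with $U_{n+1}+U_{n+1}\subset U_n$ and $\bigcap_n U_n=\{0\}$, forming a neighborhood base at $0$.  But a Hausdorff topological group with a countable neighborhood base at the identity is metrizable (Birkhoff--Kakutani), and a metrizable space is hereditarily Lindel\"of if and only if it is second countable if and only if it is separable.  So any topology you produce by this recipe is automatically metrizable and therefore cannot be an L space; non-separability and hereditary Lindel\"ofness exclude one another in your setting before any oscillation argument even enters.  (The same obstruction applies whether the scalar field is $\mathbb{Q}$ or $\mathbb{R}$; note also that the paper works over $\mathbb{R}$, whereas you switch to $\mathbb{Q}$ without justification, which would answer a weaker question.)  The way the paper avoids metrizability is precisely by not giving the point $0$ a countable base: a basic neighborhood in $\mathbb{R}^{\omega_1}$ constrains only finitely many coordinates, and no countable family of such constraints suffices, so the product topology restricted to $vec(\mathscr{L})$ is not first countable at $0$.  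Your subsequent $\Delta$-system and oscillation arguments are in the right spirit (Proposition \ref{prop1}, Lemma \ref{moore}, and Lemma \ref{lem20} in the paper run along similar lines, plus an essential appeal to Kronecker's theorem that your sketch does not mention), but they cannot salvage a construction whose ambient topology is already too small.
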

  An \emph{L vector space} is a topological vector space that is also an L space.  Throughout the paper, all topological vector spaces are over the same scalar field $\mathbb{R}$.
  
  \begin{thm}\label{thm2}
  There is an L field whose square is neither normal nor weakly paracompact.
  \end{thm}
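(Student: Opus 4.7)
The plan is to bootstrap from the L vector space $V$ constructed in Theorem \ref{thm1}. Since that construction is driven by minimal walks on $\omega_1$ and produces a topological $\mathbb{R}$-vector space whose topology is controlled by a Hamel basis $\{e_\alpha:\alpha<\omega_1\}$, I would promote the vector-space structure to a field structure while retaining the walk-based topology. Concretely, form the symmetric algebra $A=\mathbb{R}[e_\alpha:\alpha<\omega_1]$ (a commutative $\mathbb{R}$-algebra whose additive group naturally contains $V$), and let $K=\operatorname{Frac}(A)$ be its field of fractions; $K$ will be the candidate L field.

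The central task is to define a field topology on $K$ whose restriction to $V$ recovers the L topology and which makes multiplication and inversion continuous. I would specify a translation-invariant neighborhood base at $0$ indexed by finite subsets of $\omega_1$ together with a parameter $\varepsilon>0$, refining the base on $V$ by further imposing a submultiplicative bound using the walk function. The subadditivity of the walk function is what ordinarily yields continuity of addition in the vector-space setting; the analogous submultiplicative control must be strong enough that $U\cdot U\subseteq U'$ holds between sufficiently small base neighborhoods and that inversion is continuous off a basic neighborhood of $0$. Once $K$ is verified to be a topological field and $V$ is shown to embed as a closed topological subspace, the L-space properties should transfer: hereditary Lindel\"ofness reduces via a walk/$\Delta$-system argument to a combinatorial statement on $\omega_1$ closely parallel to the one used for $V$ in Theorem \ref{thm1}; regularity is immediate from translation invariance; non-separability is inherited from $V\subseteq K$. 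Since $V\times V$ is then closed in $K\times K$, the failure of normality and of weak paracompactness on $K^2$ follows from Theorem \ref{thm1} and the hereditariness of these properties to closed subspaces.

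The main obstacle is the construction of the field topology itself. Multiplication introduces cross-terms, and inversion can blow up small elements, so the neighborhood base must be defined tightly enough that $U\cdot U\subseteq U'$ holds and reciprocals of elements bounded away from $0$ remain controlled, yet loosely enough that the combinatorial apparatus needed for the hereditary Lindel\"of argument still applies. The walk machinery from \cite{moore06} and \cite{pw} should supply most of the combinatorial leverage needed, but providing quantitative submultiplicative and inversion estimates compatible with that machinery is the genuinely new ingredient, and one may need to restrict to a well-chosen dense subfield of $K$ rather than all of $\operatorname{Frac}(A)$ in order to keep these three demands compatible.
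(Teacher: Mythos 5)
Your proposal identifies the right goal (extend the vector-space construction to a field) but takes a fundamentally different and, as stated, non-functional route from the paper, and the central difficulty you flag---constructing the field topology---is precisely the step that collapses.

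The paper does not extend $vec(\mathscr{L})$ at all. It builds a new family $\mathcal{L}=\{v_\beta:\beta<\omega_1\}\subset\mathbb{C}^{\omega_1}$, with $v_\beta(\alpha)=\theta_\beta+\theta_\beta' i\neq 0$ for $\alpha>\beta$ (not $0$ as for $w_\beta$), and lets $field(\mathcal{L})$ be the subfield of $\mathbb{C}^{\omega_1}$ generated by $\mathcal{L}$ under coordinatewise operations, with the inherited product topology. This sidesteps your main obstacle entirely: addition, multiplication, and inversion are automatically continuous on a subfield of $\mathbb{C}^{\omega_1}$ because they are continuous coordinatewise; nothing needs to be engineered. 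What needs care instead is purely algebraic: one must arrange (Lemma 6 and property $(NR)$) that a non-constant rational expression in the $v_\beta(\alpha)$'s never evaluates to a real number (hence never to $0$) at any coordinate, so the generated object is genuinely a field inside $\mathbb{C}^{\omega_1}$. And the move to complex values is forced, not cosmetic: the hereditary Lindel\"ofness argument (Proposition 23) needs the Fundamental Theorem of Algebra to ensure a non-constant rational function of the $v_\beta(a(s))$'s can land in any prescribed open set; over $\mathbb{R}$ this fails. The function $f(x)=\frac{1}{x}\sin\frac{1}{x}$ is replaced by a $\mathbb{C}$-valued $g$ for the same reason.

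Your proposal has concrete gaps beyond ``define the topology somehow.'' First, you cannot keep the $w_\beta$'s as generators: $w_\beta$ is eventually $0$, so it cannot sit in any subfield of a product ring, and there is no natural way to make it invertible while retaining the walk-driven coordinates---the paper had to change the tail values. Second, $osc$ has no submultiplicativity, so the analogy with the subadditivity used for addition breaks down; no estimate of the form $U\cdot U\subseteq U'$ is available from the walk machinery, and none is used in the paper. Third, non-separability of $K$ does not follow from non-separability of a subspace $V\subseteq K$ unless $V$ is dense or $K$ is hereditarily separable, which is exactly what you cannot assume; the paper proves non-separability of $field(\mathcal{L})$ directly (using that an $x$ eventually has $x(\beta)=x(\beta+1)$ while some $y\in\mathcal{L}$ does not). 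Fourth, the hereditary Lindel\"ofness proof is a new argument (Proposition 23) that replaces linear combinations by arbitrary rational functions of the basis elements, requiring both $(NR)$-type algebraic independence and the complex analysis step; it does not follow formally from the vector-space case. So the proposal as written contains the statement of the problem rather than its solution, and the specific plan (symmetric algebra, field of fractions, handcrafted neighborhood base) would have to be abandoned in favor of the ambient-product-field trick that the paper actually uses.
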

  An \emph{L field} is a topological field that is also an L space. The L field in Theorem \ref{thm2}   has characteristic 0.
  
  We will construct examples for Theorem \ref{thm1} and Theorem \ref{thm2} in Section 3 and Section 4 respectively. The construction is based on Todorcevic's powerful method of \emph{minimal walks} in \cite{st87}, Moore's ingenious construction of a new combinatorial function---$osc$---in \cite{moore06} and more combinatorial properties of $osc$ found in our earlier investigation in \cite{pw}. This construction and earlier results, e.g., \cite{moore06}, \cite{moore08}, \cite{p}, \cite{pw}, suggest that Moore's idea and more importantly, Todorcevic's technique of minimal walk may have potential to reveal more phenomenon.  Further applications probably rely  on discoveries of further combinatorial properties of, e.g., $osc$. So in Section 5, we investigate further combinatorial properties of $osc$  and prove in Theorem \ref{**consistency} and Theorem \ref{t11} that a combinatorial property $(**)$ is independent of \rm{ZFC}. This indicates a limitation of \rm{ZFC} construction using $osc$.  
  
  The paper is organized as follows.  In Section 3, we construct an L vector space and prove Theorem \ref{thm1}. In Section 4, we construct an L field and prove Theorem \ref{thm2}. In Section 5, we investigate further combinatorial properties of the oscillation map $osc$. Readers who are interested in corresponding topic can go to corresponding section directly.

  \section{preliminary}
In this section, we introduce some standard definitions and facts.
We start with the following notations associated to Todorcevic's minimal walks. Most originate in \cite{st87} and \cite{st07} except for $L$ which originates in \cite{moore06}. For a modern and encyclopedic exposition of minimal walks, see \cite{st07}. For the following sections, we assume that readers are   familiar with the combinatorial properties of $osc$ in \cite{moore06} and \cite{pw}.
\begin{defn}
  \begin{enumerate}
    \item $[X]^\kappa$ is the set of all subsets of $X$ of size $\kappa$. Moreover, if $X$ is a set of ordinals, $k<\omega$ and $b\in [X]^k$, then $b(0),b(1),...,b(k-1)$ is the increasing enumeration of $b$ and
    throughout this paper, we will use $(b(0),...,b(k-1))$ to denote $b$.
    \item Suppose that $a$, $b$ are both finite sets of ordinals but neither is an ordinal. Say $a<b$ if $\max a<\min b$. For ordinals $\alpha,\beta$, $\alpha<b$ denotes $\{\alpha\}< b$, $a<\beta$ denotes $a<\{\beta\}$ and $\alpha\leq b$ denotes $\alpha\leq \min b$.
  \end{enumerate}
\end{defn}

\begin{defn}

 A \emph{C-sequence} is a sequence $\langle C_\alpha : \alpha< \omega_1\rangle$ such that
  $C_{\alpha+1}=\{\alpha\}$ and $C_\alpha$ is a cofinal
subset of $\alpha$ of order type $\omega$ for limit $\alpha$'s.
\end{defn}

Roughly speaking, the \emph{minimal walk} from $\beta$ towards a smaller ordinal $\alpha$ is the sequence $\beta=\beta_0>\beta_1>...>\beta_n=\alpha$ such that for each $i<n$,
$\beta_{i+1}=\min C_{\beta_i}\setminus \alpha$.  Here the \emph{weight} of the step from $\beta_i$ to $\alpha$ is $|C_{\beta_i}\cap \alpha|$.

\begin{defn}[\cite{st87}]
For a $C$-sequence, the \emph{maximal weight} of the walk is the function $\varrho_1 : [\omega_1]^2\rightarrow \omega$,
defined recursively by $\varrho_1(\alpha,\beta)=\max \{|C_\beta\cap \alpha|, \varrho_1(\alpha, \min (C_\beta\setminus \alpha))\}$
with boundary value $\varrho_1(\alpha,\alpha)=0$. $\varrho_{1\beta}: \beta\rightarrow \omega$ is defined by $\varrho_{1\beta}(\alpha)=\varrho_1(\alpha,\beta)$ for $\alpha<\beta$.
\end{defn}

Intuitively, the function $\varrho_1$ is the function constructed from the C-sequence which records the maximal weight of all steps in one walk.

We also need the following splitting function.
\begin{defn}
For $\alpha<\beta<\omega_1$, $\Delta(\alpha,\beta)=\min (\{\xi<\alpha: \varrho_1(\xi,\alpha)\neq \varrho_1(\xi,\beta)\}\cup \{\alpha\})$.
\end{defn}

The following two trace functions will be needed.
\begin{defn}[\cite{st07}]
For a given C-sequence, the \emph{upper trace} $Tr: [\omega_1]^2\rightarrow [\omega_1]^{<\omega}$
is recursively defined for $\alpha\leq \beta<\omega_1$ as follows:
\begin{itemize}
  \item $Tr(\alpha,\alpha)=\{\alpha\}$;
  \item $Tr(\alpha,\beta)=(Tr(\alpha, \min (C_\beta\setminus \alpha))\cup \{\beta\})$.
\end{itemize}
\end{defn}

\begin{defn}[\cite{moore06}]
For a given C-sequence, the \emph{lower trace} $L: [\omega_1]^2\rightarrow [\omega_1]^{<\omega}$
is recursively defined for $\alpha\leq \beta<\omega_1$ as follows:
\begin{itemize}
  \item $L(\alpha,\alpha)=\emptyset$;
  \item $L(\alpha,\beta)=(L(\alpha, \min (C_\beta\setminus \alpha))\cup \{\max (C_\beta\cap \alpha)\})\setminus \max (C_\beta\cap \alpha)$.
\end{itemize}
\end{defn}

We recall the following properties of these functions:
\begin{defn}
  A function $a: [\omega_1]^2\rightarrow \omega$ (or a sequence $\langle a_\beta:\beta<\omega_1\rangle$ where $a_\beta: \beta\rightarrow \omega$) is coherent if for any $\alpha<\beta<\omega_1$,
  $\{\xi<\alpha: a(\xi,\alpha)\neq a(\xi,\beta)\}$ is finite.
\end{defn}

\begin{fact}[\cite{st87}]\label{f0}
$\varrho_1$ is coherent and finite-to-one.
\end{fact}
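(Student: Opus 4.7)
The plan is a simultaneous transfinite induction on $\beta < \omega_1$ of both assertions: that $\varrho_{1\beta}$ is finite-to-one and that, for every $\alpha < \beta$, the set $\{\xi < \alpha : \varrho_1(\xi, \alpha) \neq \varrho_1(\xi, \beta)\}$ is finite. Successor steps are immediate since $C_{\gamma+1} = \{\gamma\}$ forces $\varrho_1(\xi, \gamma+1) = \varrho_1(\xi, \gamma)$ for every $\xi < \gamma$, so I focus on limit $\beta$ and enumerate $C_\beta = \{\gamma_n : n < \omega\}$ increasingly. Observe first that the defining recursion yields $\varrho_1(\xi, \beta) \geq |C_\beta \cap \xi|$, which will drive both halves.

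For finite-to-oneness, the preceding bound forces $\{\xi : \varrho_1(\xi, \beta) \leq k\} \subseteq [0, \gamma_k]$; on each of the finitely many open intervals $(\gamma_{i-1}, \gamma_i)$ with $i \leq k$ (setting $\gamma_{-1} = -1$) the recursion reads $\varrho_1(\xi, \beta) = \max\{i, \varrho_1(\xi, \gamma_i)\}$, so the preimage in such an interval lies inside $\varrho_{1\gamma_i}^{-1}(\{0,\dots,k\})$, finite by the inductive hypothesis for $\gamma_i < \beta$. A finite union of finite sets, together with the finitely many singletons $\gamma_0, \dots, \gamma_k$, is finite.

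For coherence, fix $\alpha < \beta$, let $n = |C_\beta \cap \alpha|$ and $\beta^* = \min(C_\beta \setminus \alpha) = \gamma_n$, and decompose $\alpha$ into the singletons $\{\gamma_j\}$ for $j < n$ together with the open intervals $(\gamma_{j-1}, \gamma_j)$ for $j \leq n$. On a lower interval $(\gamma_{j-1}, \gamma_j)$ with $j < n$ the recursion gives $\varrho_1(\xi, \beta) = \max\{j, \varrho_1(\xi, \gamma_j)\}$; I invoke inductive coherence for $(\gamma_j, \alpha)$ (both below $\beta$) to replace $\varrho_1(\xi, \gamma_j)$ by $\varrho_1(\xi, \alpha)$ off a finite set, and then finite-to-oneness of $\varrho_{1\alpha}$ forces $\varrho_1(\xi, \alpha) \geq j$ off another finite set, delivering the required equation. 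On the top interval $(\gamma_{n-1}, \alpha)$ either $\beta^* = \alpha$, where the recursion already reads $\max\{n, \varrho_1(\xi, \alpha)\}$ and only the finite-to-one step is needed, or $\beta^* > \alpha$ with $\beta^* < \beta$, in which case inductive coherence for $(\alpha, \beta^*)$ first replaces $\varrho_1(\xi, \beta^*)$ by $\varrho_1(\xi, \alpha)$ and reduces to the same pattern.

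The main obstacle is purely organizational: coherence at $\beta$ calls on finite-to-oneness at every $\alpha < \beta$, so the two clauses must travel together through the same induction; and the coherence step is naturally piecewise, with the inductive call going to $(\gamma_j, \alpha)$ when $\xi$ lies below $\gamma_{n-1}$ and to $(\alpha, \beta^*)$ when $\xi$ lies above. Once this bookkeeping is in place, no single computation is hard.
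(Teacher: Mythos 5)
Your proof is correct. The paper itself does not prove this statement; it simply cites Todorcevic's original work \cite{st87}, so there is no internal proof to compare against. Your simultaneous transfinite induction is the standard argument (compare, e.g., the treatment of $\varrho_1$ in \cite{st07}): the key observation $\varrho_1(\xi,\beta)\geq|C_\beta\cap\xi|$ drives finite-to-oneness by trapping the preimage of $\{0,\dots,k\}$ below the $k$-th element of $C_\beta$, and then the identity $\varrho_1(\xi,\beta)=\max\{j,\varrho_1(\xi,\gamma_j)\}$ on the interval $(\gamma_{j-1},\gamma_j)$ lets you reduce both halves to the inductive hypothesis at $\gamma_j$. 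The only point that deserves a careful reading is the one you flag yourself: coherence at stage $\beta$ for the pair $(\alpha,\beta)$ needs finite-to-oneness of $\varrho_{1\alpha}$ and coherence for the pairs $(\gamma_j,\alpha)$ and $(\alpha,\beta^*)$, all of which belong to strictly earlier stages (either $\alpha<\beta$ or $\beta^*<\beta$), so the two clauses genuinely must be proved together. You have organized that correctly, and the decomposition into the singletons $\gamma_j$ and the finitely many intervals, with the top interval truncated at $\alpha$ and split on whether $\beta^*=\alpha$ or $\beta^*>\alpha$, covers all cases.
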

\begin{fact}[\cite{moore06}]\label{f1}
(1) For limit ordinal $\beta>0$, $\underset{\alpha\rightarrow \beta}{lim}\ \min L(\alpha,\beta)=\beta$.

    (2)  For $\alpha<\beta<\gamma$, if $L(\beta,\gamma)<L(\alpha,\beta)$, then $L(\alpha,\gamma)=L(\alpha,\beta)\cup L(\beta,\gamma)$. 
  \end{fact}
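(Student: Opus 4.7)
The plan is to establish both parts by direct unfolding of the recursive definition of $L$, supplemented by induction on $\gamma$ for (2).

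For (1), the key step is an auxiliary identity: $\min L(\alpha,\beta)=\max(C_\beta\cap\alpha)$, valid whenever $\alpha$ is large enough that $C_\beta\cap\alpha\ne\emptyset$. Writing $m=\max(C_\beta\cap\alpha)$ and $\beta'=\min(C_\beta\setminus\alpha)$, the recursion reads $L(\alpha,\beta)=(L(\alpha,\beta')\cup\{m\})\setminus m$, where $\setminus m$ removes the ordinals below $m$. So $m\in L(\alpha,\beta)$ and every other element is $\geq m$. For limit $\beta$, cofinality of $C_\beta$ in $\beta$ forces $\max(C_\beta\cap\alpha)\to\beta$ as $\alpha\to\beta$, which is exactly (1).

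For (2), I would induct on $\gamma$. The base case $\gamma=\beta$ is trivial since $L(\beta,\beta)=\emptyset$. For the inductive step, set $\gamma'=\min(C_\gamma\setminus\alpha)$ and $m=\max(C_\gamma\cap\alpha)$. I first claim $C_\gamma\cap[\alpha,\beta)=\emptyset$: otherwise $\max(C_\gamma\cap\beta)\geq\alpha$ and by the identity from (1) $\max(C_\gamma\cap\beta)=\min L(\beta,\gamma)$, while $\min L(\alpha,\beta)=\max(C_\beta\cap\alpha)<\alpha$, so $\max L(\beta,\gamma)\geq\alpha>\min L(\alpha,\beta)$, contradicting $L(\beta,\gamma)<L(\alpha,\beta)$. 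Consequently $\gamma'\geq\beta$ and $\max(C_\gamma\cap\alpha)=\max(C_\gamma\cap\beta)=m$, so the recursion yields $L(\alpha,\gamma)=(L(\alpha,\gamma')\cup\{m\})\setminus m$ and $L(\beta,\gamma)=(L(\beta,\gamma')\cup\{m\})\setminus m$. When $\gamma'=\beta$ the conclusion is immediate, using $m<\min L(\alpha,\beta)$ to ensure that no element of $L(\alpha,\beta)$ is erased by $\setminus m$. When $\gamma'>\beta$ one first checks $L(\beta,\gamma')<L(\alpha,\beta)$ (elements of $L(\beta,\gamma')$ at least $m$ lie in $L(\beta,\gamma)$ by the recursion, while those below $m$ are below $\min L(\alpha,\beta)$ a fortiori), applies the inductive hypothesis to get $L(\alpha,\gamma')=L(\alpha,\beta)\cup L(\beta,\gamma')$, and substitutes this back into the recursion for $L(\alpha,\gamma)$ to finish.

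The main obstacle is the careful bookkeeping in (2): keeping track of which elements of each $L$-set survive the successive truncations $\setminus m$, and verifying that the separation hypothesis at level $\gamma$ transfers to level $\gamma'$ so that induction can be applied. Part (1) is essentially immediate once the identity $\min L(\alpha,\beta)=\max(C_\beta\cap\alpha)$ is extracted.
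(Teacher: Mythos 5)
The paper states this as a Fact cited from \cite{moore06} and supplies no proof of its own, so the only comparison available is with Moore's original argument --- and your proof is essentially that argument: the identity $\min L(\alpha,\beta)=\max(C_\beta\cap\alpha)$ (valid once $C_\beta\cap\alpha\neq\emptyset$) drives part (1) directly, and part (2) is an induction on $\gamma$ through the recursion $L(\alpha,\gamma)=(L(\alpha,\gamma')\cup\{\max(C_\gamma\cap\alpha)\})\setminus\max(C_\gamma\cap\alpha)$ with $\gamma'=\min(C_\gamma\setminus\alpha)$. Your reasoning is correct; I verified the two nontrivial points, namely that $L(\beta,\gamma)<L(\alpha,\beta)$ forces $C_\gamma\cap[\alpha,\beta)=\emptyset$ (hence $\gamma'\geq\beta$ and $\max(C_\gamma\cap\alpha)=\max(C_\gamma\cap\beta)$), and that the separation hypothesis $L(\beta,\gamma')<L(\alpha,\beta)$ passes to the next level so the induction closes, and both go through exactly as you describe.

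One small matter worth a sentence in a polished write-up: the recursion as stated presupposes $C_\beta\cap\alpha\neq\emptyset$, so both the identity $\min L(\alpha,\beta)=\max(C_\beta\cap\alpha)$ and the inductive step need the convention that when $C_\gamma\cap\alpha=\emptyset$ (equivalently $C_\gamma\cap\beta=\emptyset$, given $C_\gamma\cap[\alpha,\beta)=\emptyset$) the $\max$ term is simply omitted, giving $L(\alpha,\gamma)=L(\alpha,\gamma')$ and $L(\beta,\gamma)=L(\beta,\gamma')$; in that degenerate case the desired equality is inherited without change, and the base case $\gamma'=\beta$ with $L(\beta,\gamma)=\emptyset$ is also covered. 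You flag these truncation issues as the main bookkeeping burden, which is accurate, and they all resolve in the expected way.
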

  
  The oscillation of two finite functions is well-known. Suppose that $s$ and $t$ are two functions defined on a common
finite set of ordinals $F$. $Osc(s, t; F)$ is the set of all $\xi$ in $F \setminus \{\min F\}$ such that
$s(\xi^-) \leq t(\xi^-)$ and $s(\xi) > t(\xi)$ where $\xi^-$ is the greatest element of $F$ less than $\xi$.

The $osc$ map is then induced from the functions $\varrho_1$ and $L$.
\begin{defn}[\cite{moore06}]
For $\alpha<\beta<\omega_1$, $Osc(\alpha,\beta)$ denotes $Osc(\varrho_{1\alpha},\varrho_{1\beta}; L(\alpha,\beta))$ and
$osc(\alpha,\beta)=|Osc(\alpha,\beta)|$ denotes the cardinality of $Osc(\alpha,\beta)$.
\end{defn}



  We will also need Kronecker's Theorem.\footnote{A proof is available in \cite{jwsc}.}
\begin{kthm}[\cite{lk}]
  Let $A$ be a real $m\times n$ matrix and assume that $\{\textbf{z}\in \mathbb{Q}^m: A^T \textbf{z}\in \mathbb{Q}^n\}=\{\textbf{0}\}$.
  Then for any $\epsilon>0$, for any $b_0,...,b_{m-1}\in\mathbb{R}$, there exist $p_0,...,p_{m-1}\in \mathbb{Z}$, $\textbf{q}\in \mathbb{Z}^n$ such that
  $|A_i \textbf{q}-p_i-b_i|<\epsilon$ for all $i<m$ where $A_i$ is the $i$th row of $A$.
\end{kthm}
 We will also need the following combinatorial property \cite[Theorem 4.3]{moore06} of $osc$ and its generalization. 
 \begin{thm}[\cite{moore06}]
 For every $\mathscr{A}\subset [\omega_1]^k$ and $\mathscr{B}\subset [\omega_1]^l$ which are uncountable families
of pairwise disjoint sets and every $n < \omega$, there are $a$ in $\mathscr{A}$ and $b_m$ $(m<n)$ in $\mathscr{B}$
such that for all $ i < k, j < l$, and $m < n$:
$a < b_m$ and

$osc(a(i),b_m(j))=osc(a(i),b_0(j))+m$.
 \end{thm}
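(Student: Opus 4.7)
The plan is to prove this by (i) thinning $\mathscr{A}$ and $\mathscr{B}$ to uncountable homogeneous subfamilies, (ii) fixing a single $a$ with controlled lower-trace behaviour, and (iii) inductively selecting $b_0,\dots,b_{n-1}$ from $\mathscr{B}$ so that each new $b_m$ adds exactly one oscillation point to each comparison of $\varrho_{1 a(i)}$ with $\varrho_{1 b_m(j)}$ relative to $b_0$.

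First I would apply standard $\Delta$-system and pressing-down arguments, combined with the coherence of $\varrho_1$ (Fact \ref{f0}), to pass to uncountable subfamilies with empty $\Delta$-system root, constant within-tuple spacings, and ``coherently constant'' restrictions of $\varrho_{1 a(i)}$ and $\varrho_{1 b(j)}$ below any prescribed countable $\xi$ as $a,b$ vary above $\xi$. The finite-to-one property of $\varrho_1$ lets me further normalise the finitely many disagreements and their locations. I then fix $a$ in the thinned family and prune $\mathscr{B}$ using Fact \ref{f1}(1) so that for every surviving $b$ and all $i,j$ the lower trace $L(a(i),b(j))$ sits inside a region where $\varrho_{1 b(j)}$ is already coherently aligned across different $b$; this makes $Osc(a(i),b(j))$ effectively a function of the ``above-$a$'' part of the walk.

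The main step is the inductive choice of $b_m$. Given $b_0,\dots,b_{m-1}$, I would select $b_m$ so that, for each $j$, the walk from $b_m(j)$ down to $a(i)$ first visits a ``bump'' ordinal $\gamma_j^m$ lying strictly between $\max a$ and $b_m(j)$, whose weight $|C_{\gamma_j^m}\cap a(i)|$ is tuned to introduce exactly one additional oscillation point in $Osc(\varrho_{1 a(i)},\varrho_{1 b_m(j)};L(a(i),b_m(j)))$ compared with that of $b_0$. Fact \ref{f1}(2) allows me to decompose $L(a(i),b_m(j))$ as $L(a(i),\gamma_j^m)\cup L(\gamma_j^m,b_m(j))$ in the configurations I engineer, so the extra oscillation sits precisely at the junction between the two pieces. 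The existence of such a $b_m$ inside the still-uncountable leftover of $\mathscr{B}$ follows from one further pressing-down step applied to the map sending $b$ to its relevant finite pattern of walk data.

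The main obstacle is the uniformity demanded by the theorem: a single $b_m$ must produce exactly $m$ additional oscillations for \emph{every} pair $(i,j)$ simultaneously. This is where the initial homogenisation pays off---once the bump mechanism is analysed for one $(i,j)$, coherence forces the same net effect at the other pairs, modulo the pre-normalised finite error that has already been absorbed. A subtler point is ensuring that the ``new'' oscillation points contributed at successive stages are genuinely fresh and do not coincidentally cancel existing ones; this is controlled by pushing each $\gamma_j^m$ strictly above the bumps used at stages $<m$ and appealing once more to the finite-to-one property of $\varrho_1$ to rule out accidental alignments in the oscillation comparison.
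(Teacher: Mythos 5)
Your outline captures some of the right skeleton (thinning via $\Delta$-systems, decomposing the lower trace at an intermediate point, inductively incrementing the oscillation), but the central combinatorial engine is missing and the order of quantification makes the decomposition fail. The key input in Moore's proof of this theorem is his Lemma~4.4 (reproduced, in generalized form, in this paper's Lemma~\ref{lem28}): the set of $\beta$ below which, for every $\varepsilon$, every $n$, and every $\xi\geq\beta$, there is $a\in\mathscr{A}$ above $\varepsilon$ with $\varrho_1(\xi,a(i))>n$ for all $i<k$, is uncountable. This is what supplies the ordinals at which $\varrho_{1a(i)}$ overshoots $\varrho_{1b(j)}$ and thereby manufactures an oscillation; you cannot ``tune'' the weight $|C_{\gamma^m_j}\cap a(i)|$, because the $C$-sequence is fixed once and for all, and a pressing-down argument applied to $b\mapsto$ (finite walk data) only yields uncountably many $b$ with the \emph{same} pattern, not $b_0,\dots,b_{n-1}$ whose patterns strictly increment. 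Without Lemma~\ref{lem28} (or Moore's Lemma~4.4) there is nothing driving the induction.

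The second, equally serious, problem is the order in which you fix $a$ and the $b_m$. For Fact~\ref{f1}(2) to give $L(a(i),b_m(j))=L(a(i),\gamma^m_j)\cup L(\gamma^m_j,b_m(j))$ you need $L(\gamma^m_j,b_m(j))<L(a(i),\gamma^m_j)$, i.e., the entire lower trace from $b_m(j)$ down to $\gamma^m_j$ must lie below $\min L(a(i),\gamma^m_j)$. With $a$ fixed first, $\min L(a(i),\gamma^m_j)$ is a fixed ordinal below $a(i)$, and there is no reason at all that $L(\gamma^m_j,b_m(j))$ should sit beneath it as $b_m$ is sought; Fact~\ref{f1}(1) controls $\min L(\alpha,\beta)$ as $\alpha\to\beta$ for \emph{fixed} $\beta$, not the behaviour of $L(\gamma,\beta)$ as $\beta$ varies above a fixed $\gamma$, so ``pruning $\mathscr{B}$ using Fact~\ref{f1}(1)'' does not do what you need. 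Moore's proof reverses the order: fix a countable $M\prec H(\aleph_2)$, set $\delta=M\cap\omega_1$, select the $b_m$ above $\delta$, and \emph{only then} pick $a\in\mathscr{A}\cap M$ with $\min a$ above $\max_{m,j}\max L(\delta,b_m(j))$ and $L(a(i),\delta)$ pushed up close to $\delta$ using Fact~\ref{f1}(1); that coupling of $a$ to the already-fixed walk data is precisely what you forfeit by choosing $a$ first. Finally, the simultaneous increment across all pairs $(i,j)$ is the genuinely hard part; it is handled in Moore's argument (and in this paper's Lemmas~\ref{lem28} and~\ref{l14}) by interleaving $=$ and $>$ comparisons of $\varrho_1$-values at a descending chain of elementary submodels, not by the appeal to coherence you sketch, which by itself controls only finitely many disagreement points and says nothing about how the oscillation counts line up across different $j$.
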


 The arguments in \cite{moore06} can easily be adapted to show  that in the proof of    \cite[Theorem 4.3]{moore06}, for any possible candidates of $b_m (m<n)$, there are unbounded many $a\in \mathscr{A}\cap M$  satisfying the requirement of the Theorem. So we have the following (see also    \cite[Lemma 6]{pw}).
\begin{lem}[\cite{moore06}]\label{moore}
For every $\mathscr{A}\subset [\omega_1]^k$ and $\mathscr{B}\subset [\omega_1]^l$ which are uncountable families
of pairwise disjoint sets and every $n < \omega$, there are $\{a_p: p<\omega\}\subset \mathscr{A}$ and $\{b_m: m < n\}\subset\mathscr{B}$
such that for all $p<\omega, i < k, j < l$, and $m < n$:
$a_p < b_m$ and

$osc(a_p(i),b_m(j))=osc(a_p(i),b_0(j))+m$.
\end{lem}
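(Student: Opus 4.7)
The plan is to run Moore's original proof of Theorem 4.3 in \cite{moore06} inside a countable elementary submodel and to observe that the condition imposed on the ``inner'' coordinate $a$ is definable from parameters in the submodel, so unboundedly many $a$'s in $\mathscr{A}$ work against any single admissible choice of $b_0,\ldots,b_{n-1}$. Countably many such $a_p$ can then be read off.

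Fix a sufficiently large $\theta$ and a countable elementary submodel $M\prec H(\theta)$ with $\mathscr{A},\mathscr{B},n$ and the underlying $C$-sequence in $M$, and let $\delta=M\cap\omega_1$. Following Moore, select $b_0,\ldots,b_{n-1}\in\mathscr{B}$ above $\delta$ whose walks down across $\delta$ share a common stem $F\subset\delta$ and meet Moore's genericity requirements producing the additive oscillation pattern. Moore's verification of $osc(a(i),b_m(j))=osc(a(i),b_0(j))+m$ for a candidate $a\in\mathscr{A}\cap M$ uses only: (i) the position of $a$ above $F$; and (ii) finitely many values of $\varrho_1$ together with finite initial segments of the lower traces $L(a(i),\cdot)$ computed against ordinals in $M$. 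By coherence of $\varrho_1$ (Fact \ref{f0}) and the coherence-type properties of $L$ (Fact \ref{f1}), the data about $b_m(j)$ used by the verification on the part below $\delta$ agree, up to a finite correction absorbable into parameters, with the corresponding data for a single ordinal $\eta\in M$ above $F$. Hence ``$a$ is suitable'' can be expressed as a condition $\varphi(a)$ whose parameters all lie in $M$.

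Since Moore's theorem provides at least one $a\in\mathscr{A}$ with $\varphi(a)$, elementarity of $M$ implies that $\{a\in\mathscr{A}:\varphi(a)\}$ is uncountable in $M$, and in particular unbounded below $\delta$. Choose $\{a_p:p<\omega\}$ from this set, cofinal in $\delta$; then each $a_p<b_m$ because $a_p<\delta<b_m$, and the required oscillation equations hold by the choice of $\varphi$. The main obstacle is the coherence reduction in (ii): one must show that the apparent dependence of Moore's verification on the $b_m$'s factors through finitely many parameters in $M$, so that the suitability of $a$ becomes genuinely $M$-definable. This is a bookkeeping of which walk-quantities in the proof are determined by ordinals in $M$ and which by the $b_m$'s, and parallels the analysis carried out in \cite[Lemma 6]{pw}.
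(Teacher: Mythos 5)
Your approach matches the paper's: adapt Moore's proof of Theorem 4.3 in \cite{moore06}, observe that once $b_0,\ldots,b_{n-1}$ are fixed the suitability condition on $a$ is expressible using parameters below $\delta=M\cap\omega_1$, and extract countably many $a$'s from the set of suitable ones. The paper states exactly this (``there are unbounded many $a\in\mathscr{A}\cap M$'') and, like you, defers the coherence bookkeeping to \cite[Lemma 6]{pw}.

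One step in your sketch is misphrased, though. You write that since Moore's theorem provides at least one $a$ with $\varphi(a)$, elementarity of $M$ implies $\{a\in\mathscr{A}:\varphi(a)\}$ is uncountable. Elementarity does not upgrade a nonempty $M$-definable set to an uncountable one, and Moore's argument produces its witness $a$ \emph{inside} $M$, so that observation alone gives only nonemptiness (it would suffice if the witness lay outside $M$, but that is not what Moore's proof gives you). What you actually need, and what the paper asserts, is that the set of suitable $a$'s is unbounded below $\delta$: beyond the Kronecker-type residue conditions, the only constraint on $a$ is that it lie above some threshold $\tau<\delta$ determined by the walks of the $b_m$'s down past $\delta$, and Moore's density argument works uniformly for every such $\tau<\delta$. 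Once unboundedness below $\delta$ is in hand, the set is $M$-definable and unbounded in $\delta=M\cap\omega_1$, hence uncountable, and you can read off $\{a_p:p<\omega\}$ as you describe.
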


\section{Topological vector space}

A \emph{topological vector space} is a vector space endowed with a topology with respect to which the vector operations are continuous. We only consider vector spaces over $\mathbb{R}$.

 In this section we will   construct an \emph{L vector space}---a topological vector space that is an L space---whose square is neither normal nor weakly paracompact. A collection $\mathscr{U}$ of subsets of a space $X$ is \emph{point finite} if every $x\in X$ is in at most finitely many sets in $\mathscr{U}$. A space $X$ is \emph{weakly paracompact} or \emph{metacompact} if every open cover has a point finite open refinement.\medskip
 
   Let us first fix some notations.
  \begin{defn}
\begin{enumerate}
  \item For a real number $x$, $[x]$ is the greatest integer less than or equal to $x$. $frac(x)=x-[x]$ and $frac(X)=\{frac(x): x\in X\}$ for $X\subset \mathbb{R}$.


  \item For $X\subset \mathbb{R}^n$ and $x\in \mathbb{R}^n$, $x$ is a \emph{complete accumulation point} of $X$ if for any open neighbourhood $O$ of $x$, $|O\cap X|=|X|$.
  \end{enumerate}
\end{defn}
Now   fix a rationally independent set of reals $\{\theta_\alpha:\alpha<\omega_1\}$.
  \begin{defn}
  Throughout this section, denote $f(x)=\frac{1}{x}\sin\frac{1}{x}$ for $x\in \mathbb{R}\setminus\{0\}$.
  \begin{enumerate}
\item $\mathscr{L}=\{w_\beta: \beta<\omega_1\}$ where
\begin{displaymath}
   w_\beta(\alpha) = \left\{
     \begin{array}{lr}
       f(frac(\theta_\alpha osc(\alpha,\beta)+\theta_\beta))  & : \alpha<\beta\\
       \theta_\beta & : \alpha=\beta\\
       0 & : \alpha>\beta .
     \end{array}
   \right.
\end{displaymath}
We view $\mathscr{L}$ as a subspace of $\mathbb{R}^{\omega_1}$, equipped with the product topology.
\item $vec(\mathscr{L})$ is the topological vector subspace of $(\mathbb{R}^{\omega_1},+, \cdot)$ generated by $\mathscr{L}$, where the vector addition $+$ and scalar multiplication $\cdot$ are coordinatewise addition and coordinatewise multiplication.
\end{enumerate}
  \end{defn} 
   We will show that $vec(\mathscr{L})$ is an L vector space. We first prove the following property which is stronger than  hereditary Lindel\"ofness.
  
  \begin{prop}\label{prop1}
  Let $\mathscr{A}\subset [\omega_1]^k$ and
    $\mathscr{B}\subset [\omega_1]^l$ be uncountable families of pairwise disjoint sets.  Then for every sequence of non-empty open sets $\langle U_i\subset\mathbb{R}: i<k \rangle$ and every assignment $\{\langle r_{b(j)}\in \mathbb{R}\setminus \{0\}: j<l\rangle: b\in \mathscr{B}\}$, there are $a\in \mathscr{A}$ and $ b\in \mathscr{B}$ such that $a<b$ and  for all $i<k$, $\underset{j<l}{\sum} r_{b(j)}w_{b(j)}(a(i))\in U_i$.
\end{prop}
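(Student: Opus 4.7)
The plan is to combine pigeonhole refinements of $\mathscr{A}$ and $\mathscr{B}$, Lemma \ref{moore}, Kronecker's Theorem, and the wild oscillation of $f(x)=\frac{1}{x}\sin\frac{1}{x}$ near $0$. The decisive analytic fact is that $f((0,\varepsilon))=\mathbb{R}$ for every $\varepsilon>0$, so in any one-sided neighborhood of a singularity the sum $S_i$ can, via one dominant term, be steered to any prescribed real value.

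First I would refine $\mathscr{B}$: by successively pigeonholing $r_{b(j)}$ into bounded dyadic annuli of $\mathbb{R}\setminus\{0\}$ and $\theta_{b(j)} \bmod 1$ into small subintervals of $[0,1)$, pass to an uncountable $\mathscr{B}_1\subset\mathscr{B}$ and fix $r_j^*\in\mathbb{R}\setminus\{0\}$, $s_j^*\in[0,1)$ such that, for every $\eta>0$, an uncountable $\mathscr{B}_1^\eta\subset\mathscr{B}_1$ has $|r_{b(j)}-r_j^*|<\eta$ and $|\theta_{b(j)} \bmod 1-s_j^*|<\eta$. An analogous refinement of $\mathscr{A}$ to $\mathscr{A}_1$ clusters the tuples $(\theta_{a(i)} \bmod 1)_{i<k}$ near a fixed point $(\tau_i^*)_{i<k}\in[0,1)^k$. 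Then apply Lemma \ref{moore} to $\mathscr{A}_1,\mathscr{B}_1^\eta$ with a parameter $n$ to be chosen, obtaining $\{a_p:p<\omega\}$ and $\{b_m:m<n\}$ with $osc(a_p(i),b_m(j))=osc(a_p(i),b_0(j))+m$. By a further pigeonhole on the $\omega$-valued data $osc(a_p(i),b_0(j))$ across the countably many $p$'s, restrict to an infinite subsequence on which $n_{i,j}:=osc(a_p(i),b_0(j))$ is constant in $p$.

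Fix any $a=a_p$ from this subsequence and write $\beta_{i,j}=n_{i,j}\theta_{a(i)}+s_j^*$. A direct computation using the osc-increment property shows that, up to an $\eta$-perturbation controlled by the modulus of continuity of $f$ at its continuity points,
\[
S_i(a,b_m)\approx g_i(m\theta_{a(i)} \bmod 1), \qquad g_i(t):=\sum_{j<l} r_j^* f(frac(t+\beta_{i,j})).
\]
For each $i$ pick a dominant index $j_i^*$; as $t$ approaches $-\beta_{i,j_i^*} \bmod 1$ from one side, the term $r_{j_i^*}^* f(frac(t+\beta_{i,j_i^*}))$ sweeps out $\mathbb{R}$ by wild oscillation while the remaining terms stay bounded, provided the $\beta_{i,j}$ are pairwise distinct modulo $1$ (ensurable by a further generic refinement). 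Hence there is an open $V_i\subset[0,1)$, consisting of continuity points of $g_i$ at positive distance from its singularities, with $g_i(V_i)\subset U_i$. By rational independence of $\{\theta_\alpha\}$, Kronecker's Theorem applies to the $k\times 1$ matrix with rows $\theta_{a(i)}$, so $\{(m\theta_{a(i)})_{i<k} \bmod 1:m\in\mathbb{Z}\}$ is dense in $[0,1)^k$; choose $n$ large enough in advance that this orbit meets $\prod_i V_i$ for some $m^*<n$, and take $b:=b_{m^*}$.

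The principal obstacle is the order of quantifiers in the last step: $n$ must be fixed \emph{before} $a$ is produced, yet the Kronecker witness $m^*$ depends on $a$. This is resolved by the pre-refinement of $\mathscr{A}_1$ together with the constancy of the $n_{i,j}$'s, which make $V_i=V_i^a$ and the Kronecker orbit depend continuously on the $\theta_{a(i)}$'s; a uniform bound on the first hitting time across $a\in\mathscr{A}_1$, obtained by shrinking the cluster further if necessary, then permits $n$ to be fixed at the outset. A second delicate point is the interplay between the discontinuities of $f$ and the $\eta$-errors from Step 1: the continuity point $t_i^*\in V_i$ must be close enough to a singularity for the dominant term to span $\mathbb{R}$, yet far enough away that the $\eta$-perturbation in the argument of $f$ cannot push $g_i(t_i^*)$ out of $U_i$, which dictates how small $\eta$ and $\mathrm{dist}(t_i^*,\text{singularity})$ must be chosen.
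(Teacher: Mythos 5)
Your proposal identifies the right ingredients---Lemma~\ref{moore}, Kronecker's Theorem, pigeonhole refinement, and the fact that $f$ has full range on any one-sided neighborhood of $0$---but there is a genuine unresolved circularity in the order of quantifiers that you notice but do not close. The clustering tolerance $\eta$ must be fixed before you can invoke Lemma~\ref{moore} on $\mathscr{B}_1^\eta$; but the correct size of $\eta$ depends on the target windows $V_i$, which depend on the locations of the singularities $\beta_{i,j}=n_{i,j}\theta_{a(i)}+s_j^*$, and the integers $n_{i,j}$ are only extracted by a pigeonhole \emph{after} Lemma~\ref{moore} has already been applied. Similarly, $n$ must be supplied to Lemma~\ref{moore} before $a$ and the $V_i$'s that determine the Kronecker hitting time exist. ``Shrinking the cluster further if necessary'' does not untangle this, because the singularities of $g_i$ shift with the uncontrolled $n_{i,j}\theta_{a(i)}$. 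A second soft spot: the $\beta_{i,j}$ being pairwise distinct mod $1$ is not a consequence of rational independence of the $\theta$'s, since $s_j^*$ are accumulation points rather than actual $\theta_{b(j)}$'s, so ``ensurable by a further generic refinement'' is doing more work than it can carry.

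The paper breaks the circularity differently and cleanly, and it is worth seeing how. First it applies \cite[Theorem 7]{pw} to fix a \emph{single} integer pattern $c_{ij}$ with $c_{i0}=0$ valid uniformly across the families; this is absent from your plan and does more than the per-$p$ pigeonhole on $osc(a_p(i),b_0(j))$. Then it fixes a countable elementary submodel $M\prec H(\aleph_2)$ and specific $a,b\notin M$, and defines $F_i(x)=\sum_{j<l} r_{b(j)} f(frac(x+\theta_{a(i)}c_{ij}+\theta_{b(j)}))$ with the \emph{actual} $r_{b(j)},\theta_{b(j)}$, not accumulation-point surrogates. Because $c_{i0}=0$, near $x=-\theta_{b(0)}$ only the $j=0$ term blows up, and rational independence of $\{\theta_\alpha\}$ makes the other terms bounded automatically---no distinctness hypothesis on $\beta_{i,j}$'s to check, and no $\eta$-error to absorb. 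The open sets $O_i,O_i',O_{ij}'',O_{ij}'''$ are then chosen inside $M$, and elementarity makes the sets $\ms{A}',\ms{B}'$ defined by these neighborhoods uncountable (they are definable in $M$ and contain $a,b\notin M$). Only at this point are Kronecker plus compactness used to get a uniform $m$, and Lemma~\ref{moore} is the very last step with $n=m$ already in hand. That reordering---pattern first, elementary submodel second, Lemma~\ref{moore} last---is exactly what your approach is missing.
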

\begin{proof}
  Going to uncountable subfamilies we may assume, by \cite[Theorem 7]{pw} (see also Theorem \ref{pw} in Section 5), that there is a sequence $\langle c_{ij}: i<k,j<l\rangle \in \mathbb{Z}^{k\times l}$ such that for any $a\in \mathscr{A}$, for any
    $b\in \mathscr{B}$, if $a<b$, then $osc(a(i),b(j))=osc(a(i),b(0))+c_{ij}$ whenever $i<k,j<l$. Note that $c_{i0}=0$ for any $i<k$.\smallskip


Fix  a countable elementary submodel $M\prec H(\aleph_2)$ containing everything relevant.\footnote{See \cite[Section 3]{moore06} for basic facts about elementary submodels.} Fix for now $a\in \ms{A}\setminus M$ and $b\in \ms{B}\setminus M$.

    For each $i<k$, define $F_i(x)=\underset{j<l}{\sum} r_{b(j)}f(frac(x+\theta_{a(i)}c_{ij}+\theta_{b(j)}))$. Note that $\theta_{a(i)}$'s and $\theta_{b(j)}$'s are rationally independent. So $F_i(x)$ is defined and continuous on some small interval containing $-\theta_{b(0)}$. Hence
    \begin{eqnarray*}
    && \lim_{x\rightarrow -\theta_{b(0)}} \sum_{0<j<l} r_{b(j)}f(frac(x+\theta_{a(i)}c_{ij}+\theta_{b(j)}))\\
    &=& \sum_{0<j<l} r_{b(j)}f(frac(-\theta_{b(0)}+\theta_{a(i)}c_{ij}+\theta_{b(j)})).
    \end{eqnarray*}
    Moreover, the range of $f(frac(x+\theta_{b(0)}))$ is $\mathbb{R}$ on $(-\theta_{b(0)},-\theta_{b(0)}+\epsilon)$ for any $\epsilon>0$. Choose $\epsilon$ small enough such that $F_i$ is continuous on $(-\theta_{b(0)},-\theta_{b(0)}+\epsilon)$. Then the range of $F_i$ on $(-\theta_{b(0)},-\theta_{b(0)}+\epsilon)$ is $\mathbb{R}$. Now choose, for each $j<l$, open sets $O_i,O_i',O_{ij}'', O_{ij}'''$ in $M$  such that
    \begin{enumerate}
    \item $O_i\subset (0,1)$, $\theta_{a(i)}\in O_i'$, $\theta_{b(j)}\in O_{ij}''$ and $r_{b(j)}\in O_{ij}'''$;

     \item $\underset{j<l}{\sum} r_jf(frac(x+yc_{ij}+z_j))\in U_i$ for all $\langle r_j\in O_{ij}''': j<l\rangle$, $x\in O_i$, $y\in O_i'$ and $\langle z_j\in O_{ij}'': j<l\rangle$.
    \end{enumerate}
     For $j<l$, let $O_j''=\underset{i<k}{\bigcap} O_{ij}''$ and $O_j'''=\underset{i<k}{\bigcap} O_{ij}'''$. Observe that $\underset{i<k}{\prod} O_i'$,  $\underset{j<l}{\prod} O_j''$ and $\underset{j<l}{\prod} O_j'''$ are open neighbourhoods of $(\theta_{a(0)},...,\theta_{a(k-1)})$, $(\theta_{b(0)},...,\theta_{b(l-1)})$ and $(r_{b(0)},...,r_{b(l-1)})$  respectively.
    Now
    $$\ms{A}'=\{a'\in \ms{A}: (\theta_{a'(0)},...,\theta_{a'(k-1)})\in  \underset{i<k}{\prod} O_i'\},$$
    $$\ms{B}'=\{b'\in \ms{B}: (\theta_{b'(0)},...,\theta_{b'(k-1)})\in  \underset{j<l}{\prod} O_j''\text{ and } (r_{b(0)},...,r_{b(l-1)}) \in \underset{j<l}{\prod} O_j'''\}$$
    are in $M$ since they are definable using parameters in $M$. By elementarity,  $\ms{A}'$ and $\ms{B}'$ 
    are uncountable since they are in $M$ but not subsets of $M$.\smallskip


    Finally for each $a\in\ms{A}'$ and $t\in [0,1]^k$, by Kronecker's Theorem, there is a natural number $m_{a,t}$ such that   $frac(\theta_{a(i)}m_{a,t}+t(i))\in O_i$ whenever $i<k$. By compactness of $[0,1]^k$, there is a natural number $m_a$ such that for each $t$ in $[0,1]^k$ and hence $\mathbb{R}^k$, there is some $m'<m_a$,   $frac(\theta_{a(i)}m'+t(i))\in O_i$ whenever $i<k$.
    Choose an uncountable subfamily $\mathscr{A}''\subset \mathscr{A}'$ and a natural number $m$ such that  $m_a=m$ for any $a\in\ms{A}''$. 
    
    Then for each $a\in \mathscr{A}''$ and
    $t\in \mathbb{R}^k$, there is a $m'<m$ such that   $frac(\theta_{a(i)}m'+t(i))\in O_i$ whenever $i<k$. Then by Lemma \ref{moore}, there are $a\in \mathscr{A}''$ and $\{b_s\in \mathscr{B}': s<m\}$ such
    that for all $i<k$ and $s<m$, $osc(a(i),b_s(0))=osc(a(i),b_0(0))+s$. Now, by our choice of $\mathscr{A}''$ and $m$, fix $s<m$ such that  
    $frac(\theta_{a(i)}osc(a(i),b_s(0)))\in O_i$ whenever $i<k$.\footnote{See also the proof of Theorem 5.6 in \cite{moore06}.} Together with (2) and our choice of $\ms{A}', \ms{B}'$, for all $i<k$,
    \begin{eqnarray*}
 && \sum_{j<l} r_{b_s(j)}w_{b_s(j)}(a(i))\\
  &=&\sum_{j<l} r_{b_s(j)}f(frac(\theta_{a(i)}osc(a(i),b_s(j))+\theta_{b_s(j)}))\\
  &=&\sum_{j<l}r_{b_s(j)}f(frac(\theta_{a(i)}osc(a(i),b_s(0))+\theta_{a(i)}c_{ij}+\theta_{b_s(j)}))\in U_i
 \end{eqnarray*}
 This finishes the proof of the proposition.
\end{proof}
Note that the proof of above proposition actually shows the following stronger property.
\begin{enumerate}
\item[($1\ms{B}$)] For $\ms{A},\ms{B}$, $\langle U_i: i<k\rangle$ and $\{r_\alpha\in \mathbb{R}\setminus\{0\}: \alpha<\omega_1\}$ as in Proposition \ref{prop1}, there are uncountable sub-families $\ms{A}''\subset\ms{A}$, $\ms{B}'\subset \ms{B}$ and $m<\omega$ such that whenever we are given $a\in \ms{A}''$ and $\{b_s\in \ms{B}': s<m\}$ satisfying   for all $i<k$ and $s<m$, $a<b_s$ and $osc(a(i),b_s(0))=osc(a(i),b_0(0))+s$, there is $s<m$ satisfying  for all $i<k$, $\sum_{j<l} r_{b_s(j)}w_{b_s(j)}(a(i))\in U_i$.
\end{enumerate}
Inductively applying ($1\ms{B}$), we get the following.
  \begin{cor}\label{cor13}
  Let $k\in \omega\setminus \{0\}$,  $n\in \omega\setminus \{0\}$,   $\langle l_p\in \omega\setminus\{0\}: p<n\rangle$ and $\mathscr{A}\subset [\omega_1]^k$,
    $\mathscr{B}_p\subset [\omega_1]^{l_p}$ ($p<n$) be uncountable families of pairwise disjoint sets. Then  for every sequence of non-empty open sets $\langle U_{i,p}\subset\mathbb{R}: i<k, p<n \rangle$ and   every assignment $\{ r_{\alpha}\in \mathbb{R}\setminus \{0\}: \alpha< \omega_1\}$, there are $a\in \mathscr{A}$ and $ \langle b_p\in \mathscr{B}_p: p<n \rangle$ such that for all $i<k$ and $p<n$, $a< b_p$ and 
    $\underset{j<l_p}{\sum} r_{b_p(j)}w_{b_p(j)}(a(i))\in U_{i,p}$.
\end{cor}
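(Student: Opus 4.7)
The proof is by iterated application of property ($1\ms{B}$), combined with a multi-family extension of Lemma~\ref{moore}.

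First, iteratively shrink $\mathscr{A}$ using ($1\ms{B}$). Set $\mathscr{A}^{(-1)}=\mathscr{A}$. For $p=0,1,\ldots,n-1$ in order, apply ($1\ms{B}$) to the quadruple $(\mathscr{A}^{(p-1)},\mathscr{B}_p,\langle U_{i,p}:i<k\rangle,\{r_\alpha\})$ to obtain uncountable sub-families $\mathscr{A}^{(p)}\subset\mathscr{A}^{(p-1)}$ and $\mathscr{B}'_p\subset\mathscr{B}_p$ together with a constant $m_p<\omega$. Let $\mathscr{A}^{*}=\mathscr{A}^{(n-1)}$. Because the conclusion of ($1\ms{B}$) is universally quantified over $a$, it is preserved when $\mathscr{A}^{(p-1)}$ is replaced by any uncountable sub-family; hence ($1\ms{B}$) holds for each triple $(\mathscr{A}^{*},\mathscr{B}'_p,m_p)$ with respect to the open sets $\langle U_{i,p}:i<k\rangle$.

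Next, I would produce a single $a\in\mathscr{A}^{*}$ together with finite blocks $\{b_{p,s}\in\mathscr{B}'_p:s<m_p\}$ for each $p<n$ such that $a<b_{p,s}$ and $osc(a(i),b_{p,s}(0))=osc(a(i),b_{p,0}(0))+s$ for all $i<k$, $p<n$, $s<m_p$. This is a multi-family version of Lemma~\ref{moore} and can be established by a routine extension of the elementary submodel argument behind the original: fix a countable elementary submodel $M$ containing all the parameters, construct each block $\{b_{p,s}\}$ outside $M$ from the combinatorial properties of $osc$ used in the proof of Lemma~\ref{moore}, and then use the uniformity of the construction to find a single $a\in\mathscr{A}^{*}\cap M$ below every $b_{p,s}$ that witnesses the $osc$-increment conditions for all $n$ blocks simultaneously. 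The key point is that these conditions are independent across different $p$.

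Finally, for each $p<n$ independently, invoke ($1\ms{B}$) for the triple $(\mathscr{A}^{*},\mathscr{B}'_p,m_p)$ with the data $a$ and $\{b_{p,s}:s<m_p\}$; this yields some $s_p<m_p$ for which $\sum_{j<l_p}r_{b_{p,s_p}(j)}w_{b_{p,s_p}(j)}(a(i))\in U_{i,p}$ for all $i<k$, and setting $b_p:=b_{p,s_p}$ produces the required witnesses. The main obstacle is the middle step: although the multi-family extension of Lemma~\ref{moore} is straightforward in spirit, it requires revisiting the elementary submodel proof of the original lemma to ensure that the $osc$-increment data for the different families $\mathscr{B}'_p$ can be realised compatibly with a common $a$.
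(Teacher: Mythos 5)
Your overall structure matches the paper's: first iterate ($1\ms{B}$) over $p<n$ to shrink $\mathscr{A}$ and each $\mathscr{B}_p$; then find a single $a$ together with a length-$m$ block in each $\mathscr{B}'_p$ satisfying the $osc$-increment conditions; then apply the conclusion of ($1\ms{B}$) block-by-block to pick the $s_p$'s. The divergence, and the gap, is in the middle step. You assert a ``multi-family extension of Lemma~\ref{moore}'' and candidly flag it as the main obstacle, saying it ``requires revisiting the elementary submodel proof of the original lemma.'' That concession is where the proposal stops being a proof: you have not actually verified that a single $a\in\mathscr{A}^*\cap M$ can be chosen to witness the $osc$-increment conditions simultaneously for $n$ independently chosen blocks, one per $\mathscr{B}'_p$. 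The ``unboundedly many $a$'' guarantee in Lemma~\ref{moore} is stated for one family $\mathscr{B}$ and one block at a time; it is not immediate that the sets of good $a$'s for different blocks have a common element, and claiming the conditions are ``independent across different $p$'' does not establish this.

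The paper sidesteps the need for any new elementary-submodel argument with a short bundling trick, and this is exactly the observation you were missing. After unifying to a single $m$ (which is legitimate: if ($1\ms{B}$) holds with $m_p$, it holds with any $m\geq m_p$), choose an uncountable pairwise-disjoint family $\ms{B}'\subset[\omega_1]^{\sum_{p<n}l_p}$ whose elements decompose into consecutive blocks $\{b(j):\sum_{q<p}l_q\leq j<\sum_{q\leq p}l_q\}\in\ms{B}'_p$. A single application of Lemma~\ref{moore} to $\ms{A}'$ and $\ms{B}'$ produces one $a$ and $\{b_s:s<m\}\subset\ms{B}'$ with $osc(a(i),b_s(j))=osc(a(i),b_0(j))+s$ for \emph{every} coordinate $j<\sum_{p<n}l_p$; reading off the $p$-th block $b_{s,p}\in\ms{B}'_p$ then gives exactly the compatible $osc$-increment data for all $p$ at once, with a common $a$. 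This turns what you flagged as the ``main obstacle'' into a two-line reduction to the already-proved single-family lemma, rather than a revision of its proof.
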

\begin{proof}
Inductively applying ($1\ms{B}$) $n$ times and enlarging $m$ if necessary, we may assume that for some uncountable $\ms{A}'\subset\ms{A}$, $\{\ms{B}_p'\subset \ms{B}_p: p<n\}$ and $m<\omega$,
\begin{enumerate}
\item whenever we are given $a\in \ms{A}'$, $p<n$ and $\{b_{s,p}\in \ms{B}'_p: s<m\}$ satisfying   for all $i<k$ and $s<m$,  $a<b_{s,p}$ and $osc(a(i),b_{s,p}(0))=osc(a(i),b_{0,p}(0))+s$, there is $s<m$ satisfying for all $i<k$, $\sum_{j<l} r_{b_{s,p}(j)}w_{b_{s,p}(j)}(a(i))\in U_{i,p}$.
\end{enumerate}

Now choose an uncountable pairwise disjoint $\ms{B}'\subset [\omega_1]^{\sum_{p<n} l_p}$ such that 
\begin{enumerate}\setcounter{enumi}{1}
\item for each $b\in \ms{B}'$ and   $p<n$, $\{b(j): \sum_{q<p} l_q\leq j<\sum_{q\leq p} l_q\}\in \ms{B}'_p$.
\end{enumerate}
 Then by Lemma \ref{moore}, there are $a\in \mathscr{A}'$ and $\{b_s\in \mathscr{B}': s<m\}$ such
    that 
    \begin{enumerate}\setcounter{enumi}{2}
    \item  for all $i<k,j<\sum_{p<n} l_p$ and $s<m$, $a< b_s$ and $osc(a(i),b_s(j))=osc(a(i),b_0(j))+s$.
    \end{enumerate}
    Now fix $p<n$. By (2), for all $s<m$, $b_{s,p}\stackrel{\triangle}{=} \{b_s(j): \sum_{q<p} l_q\leq j<\sum_{q\leq p} l_q\}\in \ms{B}'_p$. By (3),   for all $i<k$ and $s<m$, $osc(a(i),b_{s,p}(0))=osc(a(i),b_{0,p}(0))+s$. By (1),   there is $s_p<m$ such that  for all $i<k$,  $\sum_{j<l} r_{b_{s_p,p}(j)}w_{b_{s_p,p}(j)}(a(i))\in U_{i,p}$. Now $a$ and $\langle b_{s_p,p}: p<n\rangle$ are as desired.
\end{proof}

 We now prove that the topological vector space $vec(\ms{L})$ is an L vector space.
 \begin{thm}
 $vec(\ms{L})$ is an L vector space.
 \end{thm}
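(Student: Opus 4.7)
The plan is to verify the four defining properties of an L vector space. Since $vec(\mathscr{L})$ is a linear subspace of the TVS $(\mathbb{R}^{\omega_1}, +, \cdot)$, it inherits the topological vector space structure and the regularity. For non-separability, suppose $D \subseteq vec(\mathscr{L})$ is countable. Each $d \in D$ is a finite linear combination of $w_\beta$'s, so $\bigcup_{d \in D} \mathrm{supp}(d)$ is bounded by some $\gamma_0 < \omega_1$; for $\gamma > \gamma_0$, $w_\gamma(\gamma) = \theta_\gamma \neq 0$ while $d(\gamma) = 0$ for each $d \in D$, and a basic neighbourhood of $w_\gamma$ at coordinate $\gamma$ within a small interval around $\theta_\gamma$ missing $0$ avoids $D$.

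The main task is hereditary Lindel\"ofness, which I will establish by showing no uncountable subset is right-separated. Suppose for contradiction that $\{v_\beta : \beta < \omega_1\}$ is right-separated by basic neighbourhoods $V_\beta$, where $V_\beta$ is determined by $a_\beta \in [\omega_1]^{k_\beta}$ and opens $U_{\beta, i} \ni v_\beta(a_\beta(i))$. The aim is to find $\alpha < \beta$ in the enumeration with $v_\beta \in V_\alpha$. The first step is a standard normalisation: pass to an uncountable subfamily in which the supports have common size $l$ forming a $\Delta$-system with root $R$ and pairwise disjoint variable parts $b_\beta \in [\omega_1]^{l'}$; $k_\beta = k$ is constant; each $U_{\beta, i}$ is replaced by a smaller basic open from a countable base of $\mathbb{R}$ and pigeonholed to a common $U_i$; the designated coordinates decompose as $a_\beta = a_\beta^R \sqcup a_\beta^b \sqcup a_\beta^\circ$ with $a_\beta^R \subseteq R$ fixed ordinalwise, $a_\beta^b \subseteq b_\beta$ in fixed positional correspondence within $b_\beta$, and the $a_\beta^\circ$'s pairwise disjoint and disjoint from every $R \cup b_{\beta'}$; and the family is spread out with $\max(a_\alpha^\circ \cup b_\alpha) < \min(a_\beta^\circ \cup b_\beta)$ and $\min(a_\beta^\circ \cup b_\beta) > \max R$ whenever $\alpha < \beta$.

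With this in place, the condition $v_\beta(a_\alpha(p)) \in U_p$ splits by position. When $a_\alpha(p) \in R$ the coordinate coincides with $a_\beta(p)$, so $v_\beta(a_\alpha(p)) = v_\beta(a_\beta(p)) \in U_p$ automatically. When $a_\alpha(p) \notin R$, spread-out forces $a_\alpha(p) > \max R$, so every $w_{\gamma'}(a_\alpha(p)) = 0$ for $\gamma' \in R$, reducing $v_\beta(a_\alpha(p))$ to $\sum_{j < l'} r^\beta_{b_\beta(j)} w_{b_\beta(j)}(a_\alpha(p))$. Setting $\tilde a_\alpha := a_\alpha^b \sqcup a_\alpha^\circ$ gives a pairwise disjoint uncountable family, and I apply the strengthened property $(1\mathscr{B})$ of Proposition \ref{prop1} to $\mathscr{A} = \{\tilde a_\alpha\}$, $\mathscr{B} = \{b_\beta\}$, the opens $(U_{p_i})$ indexed by the non-root positions, and coefficients $r_\gamma := r^\beta_\gamma$ for $\gamma \in b_\beta$ (well-defined by disjointness of the $b_\beta$'s). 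Combining with Lemma \ref{moore} for $n = m$ produces countably many $\tilde a_{\alpha_p}$'s and $m$ many $b_{\beta_s}$'s with the osc-offset pattern; pigeonholing the witness $s_p < m$ across the $\tilde a_{\alpha_p}$'s yields a common $s^*$, and the constraint $\tilde a_{\alpha_p} < b_{\beta_{s^*}}$ combined with spread-out forces $\alpha_p \leq \beta_{s^*}$, so infinitely many distinct $\alpha_p$ give at least one with $\alpha_p < \beta_{s^*}$. This $\alpha_p$ satisfies $v_{\beta_{s^*}} \in V_{\alpha_p}$, contradicting right-separation.

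The chief obstacle is the bookkeeping in the refinement step: since $a_\beta$ may intersect both the root $R$ and the variable support $b_\beta$, it must be split into three pieces and each kind of intersection treated separately. Root intersections are neutralised by the constant-$U_i$ refinement, which makes the corresponding evaluations trivially land in the right opens, while variable-part intersections are absorbed into the pairwise disjoint family $\{\tilde a_\alpha\}$ on which Proposition \ref{prop1} operates cleanly.
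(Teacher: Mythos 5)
Your proof is correct and follows essentially the same route as the paper: normalize the right-separated family via $\Delta$-systems on both the checking coordinates $a_\beta$ and the supports, reduce to the variable parts where only Kronecker/osc control is needed, and invoke Proposition~\ref{prop1} (via $(1\ms{B})$ and Lemma~\ref{moore}) to produce the needed pair $\alpha<\beta$. Your explicit pigeonholing of the witness $s_p$ across infinitely many $\tilde a_{\alpha_p}$ to force $\alpha_p<\beta_{s^*}$ is a clean way to make precise the order of indices that the paper leaves implicit. The non-separability argument is the paper's.

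One bookkeeping correction: the partition $a_\beta = a_\beta^R\sqcup a_\beta^b\sqcup a_\beta^\circ$ with $a_\beta^R\subseteq R$ fixed, $a_\beta^b\subseteq b_\beta$, and $a_\beta^\circ$ pairwise disjoint and avoiding every $R\cup b_{\beta'}$ silently assumes that the $\Delta$-system root of $\{a_\beta\}$ lies inside $R$. That need not hold: if some fixed ordinal $\gamma\in\bigcap_\beta a_\beta$ lies outside $R$, then (after discarding the at most one $\beta$ with $\gamma\in b_\beta$) $\gamma$ sits in $a_\beta^\circ$ for every $\beta$, destroying the pairwise disjointness you later need when applying $(1\ms{B})$. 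Such positions are nonetheless handled exactly like the $R$-positions, since $a_\alpha(p)=a_\beta(p)$ gives $v_\beta(a_\alpha(p))=v_\beta(a_\beta(p))\in U_p$ for free. So the natural grouping is simply ``fixed'' versus ``variable'' positions of $a_\beta$ --- precisely the paper's split into $a^r$ and $a_\alpha\setminus a^r$ --- and the finer three-way split buys nothing. With that adjustment the rest of your argument goes through unchanged.
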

 \begin{proof}
 We first show that $vec(\ms{L})$ is hereditarily Lindel\"of.  It suffices to prove that for any $\{(x_\alpha, O_\alpha): \alpha<\omega_1\}$ such that  each $O_\alpha$ is an open neighborhood of $x_\alpha$,   there are $\alpha<\beta$ such that $x_\beta\in O_\alpha$.
 
 Going to uncountable subset and shrinking neighborhoods, we may assume that there are $k, l<\omega$ and a sequence of rational intervals $\langle U_i: i<k\rangle$  such that for any $\alpha<\omega_1$, 
 $$x_\alpha=\sum_{j<l} r_\alpha(j) w_{b_\alpha(j)} \text{ and}$$
 $$O_\alpha=\{x: x(a_\alpha(i))\in U_i \text{ for any } i<k\}$$
 for some $a_\alpha\in [\omega_1]^k$, $b_\alpha\in [\omega_1]^l$ and $r_\alpha\in (\mathbb{R}\setminus\{0\})^l$.  Going to uncountable $\Gamma\subset \omega_1$, we may assume that 
 $$\ms{A}=\{a_\alpha: \alpha\in \Gamma\}, \ms{B}=\{b_\alpha: \alpha\in \Gamma\}$$ 
  are $\Delta$-systems with roots $a^r, b^r$ respectively and for any $\alpha\in \Gamma$, $a^r<a_\alpha\setminus a^r$, $b^r< b_\alpha\setminus b^r$.

  Applying Proposition \ref{prop1} to $\{a\setminus a^r: a\in \ms{A}\}, \{b\setminus b^r: b\in \ms{B}\}, \langle U_i: |a^r|\leq i<k\rangle $ and assignment $\{r'_{b_\alpha(j)}=r_\alpha(j): \alpha\in \Gamma, |b^r|\leq j <l\}$, we can find $\alpha<\beta$ in $\Gamma$ such that $b^r< a_\alpha\setminus a^r$ and for all $|a^r|\leq i<k$, 
$$\sum_{|b^r|\leq j<l} r_{\beta}(j)w_{b_\beta(j)}(a_\alpha(i)) \in U_i.$$
Now we check that $x_\beta\in O_\alpha$.

Fix $i<k$. If $i< |a^r|$, then $x_\beta(a_\alpha(i))=x_\beta(a_\beta(i))\in U_i$. If $i\geq |a^r|$, then $w_{b_\beta(j)}(a_\alpha(i))=0$ for $j< |b^r|$ by the fact that $b^r<a_\alpha\setminus a^r$. So 
$$x_\beta(a_\alpha(i))=\sum_{|b^r|\leq j<l} r_{\beta}(j)w_{b_\beta(j)}(a_\alpha(i)) \in U_i.$$

We then show that $vec(\ms{L})$ is not separable. Fix $X\in [vec(\ms{L})]^\omega$. Note for some $\alpha<\omega_1$, $x(\beta)=0$  whenever  $x\in X$ and $\beta>\alpha$. Then $w_\beta$ is not in the closure of $X$ whenever $\beta>\alpha$ since $w_\beta(\beta)=\theta_\beta\neq 0$.
 \end{proof}
 
 We now analyze the topological properties of $vec(\ms{L})^2$. We first get the following combinatorial property which follows from the proof of    \cite[Lemma 10]{pw}.
 \begin{prop}\label{prop2}
 For any $\Gamma\in [\omega_1]^{\omega_1}$, there are $A\in [\Gamma]^{\omega_1}$ and uncountable pairwise disjoint $\ms{B}\subset [\Gamma]^2$ such that for any $\alpha\in A$, for any $b\in \ms{B}\cap [\omega_1\setminus \alpha]^2$,
 $$w_{b(j)}(\alpha)>\sin 3  \text{ for some } j<2.$$
 \end{prop}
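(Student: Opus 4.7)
The plan is to follow the strategy of \cite[Lemma 10]{pw}, adapted to the current function $f(x) = \frac{1}{x}\sin\frac{1}{x}$ and the threshold $\sin 3$. I first pick an open interval $I \subset (0,1)$ on which $f(x) > \sin 3$. Since $f(1/3) = 3\sin 3 > \sin 3$ and $f$ is continuous at $1/3$, a small neighbourhood of $1/3$ works. The task then reduces to producing $A \in [\Gamma]^{\omega_1}$ and uncountable pairwise disjoint $\ms{B} \subset [\Gamma]^2$ such that for every $\alpha \in A$ and $b \in \ms{B}$ with $\alpha < b$, the residue $frac(\theta_\alpha \cdot osc(\alpha, b(j)) + \theta_{b(j)})$ lies in $I$ for some $j < 2$.

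By a preliminary pigeonhole, I would thin $\Gamma$ so that $\{\theta_\alpha : \alpha \in \Gamma\}$ has sufficiently uniform behaviour for Kronecker's Theorem to yield a single natural number $N$ with the property: for every $\alpha$ in the refined $\Gamma$ and every real $c$, some $k \in \{0, 1, \ldots, N - 1\}$ satisfies $frac(\theta_\alpha k + c) \in I$. Then fix a countable elementary submodel $M \prec H(\aleph_2)$ containing $\Gamma$, $\{\theta_\alpha : \alpha < \omega_1\}$, $I$, $N$, and all other relevant parameters.

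For each $\alpha^* \in \Gamma \setminus M$, apply Lemma \ref{moore} with $k = 1$, $l = 2$, and $n = N$ to an uncountable pairwise disjoint $\ms{B}_0 \in M$ of pairs: this yields $\{a_p : p < \omega\} \subset \Gamma$ and $\{b_m : m < N\} \subset \ms{B}_0$ (all above $\alpha^*$) such that $osc(a_p, b_m(j)) = osc(a_p, b_0(j)) + m$. Since the values $osc(a_p, b_m(j))$ sweep through $N$ consecutive integers as $m$ varies, the choice of $N$ forces some $m < N$ to give a residue in $I$ for $\alpha^*$. A reflection argument analogous to that of Proposition \ref{prop1} transfers this existence down to an uncountable definable subfamily of $M$; iterating through a chain of countable elementary submodels and diagonalising then produces $A$ and $\ms{B}$ with the required universal property.

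The main obstacle is the universal quantifier over both $A$ and $\ms{B}$: a priori the set of "bad" $\alpha$'s for a given pair $b \in \ms{B}$ could be uncountable, so that no uncountable $A$ avoids all bad sets simultaneously. The argument, as in \cite[Lemma 10]{pw}, overcomes this by leveraging the uniform Kronecker bound $N$ together with the combinatorial richness of $osc$ provided by Lemma \ref{moore}: these force the bad set for each $b \in \ms{B}$ to be contained in (essentially) $M \cap \omega_1$, after which any $A \in [\Gamma \setminus M]^{\omega_1}$ avoids all the bad sets.
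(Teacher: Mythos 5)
There is a genuine gap, and it is located exactly where you flag the ``main obstacle.'' Lemma~\ref{moore} is an \emph{existential} statement: given families $\ms{A},\ms{B}$, it produces \emph{some} $\{a_p\}$ and $\{b_m : m<n\}$ with the $osc$-shift property. Proposition~\ref{prop2} demands a \emph{universal} conclusion: there must be uncountable $A,\ms{B}$ so that \emph{every} $\alpha\in A$ and \emph{every} $b\in\ms{B}$ above $\alpha$ satisfy the bound. Passing from the existence of good configurations to a uniform universal statement is exactly the content of \cite[Theorem~9]{pw} (Theorem~\ref{pw1} of this paper), and your sketch simply asserts, without argument, that ``the bad set for each $b\in\ms{B}$ is contained in $M\cap\omega_1$.'' Nothing in Kronecker's Theorem or Lemma~\ref{moore} forces the bad $\alpha$'s for a fixed $b\in\ms{B}\setminus M$ to be bounded by $M\cap\omega_1$; those $\alpha$'s could a priori be cofinal below $b(0)$. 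The ``reflection and diagonalisation'' step would have to reprove a version of Theorem~\ref{pw1} to close this, and that derivation is not sketched.

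A second, related issue is that Kronecker's Theorem is the wrong tool here and its use forces you into the quantifier trap above. The paper avoids Kronecker entirely: it picks $\eta$ so $\theta_\eta$ is a complete accumulation point of $\{\theta_\xi:\xi\in\Gamma\}$, chooses a single integer $p$ with $\tfrac13<frac(p\theta_\eta)<\tfrac12$, and thins to $\Gamma'=\{\alpha:\theta_\alpha\in O\}$ for a suitable neighbourhood $O$ of $\theta_\eta$. Theorem~\ref{pw1} with pattern $(c_{00},c_{01})=(0,p)$ then yields uncountable $A\subset\Gamma'$ and $\ms{B}\subset[\Gamma']^2$ with $osc(\alpha,b(1))=osc(\alpha,b(0))+p$ for \emph{all} relevant $\alpha,b$. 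The punchline is purely algebraic: if $c:=frac(\theta_\alpha osc(\alpha,b(0))+\theta_{b(0)})\le\tfrac13$, then adding $p\theta_\alpha+\theta_{b(1)}-\theta_{b(0)}$ (whose fractional part lies in $(\tfrac13,\tfrac12)$ by the choice of $O$) lands the $j=1$ residue in $(\tfrac13,\tfrac56)$. Either way some $j<2$ has residue in $(\tfrac13,1)$, where $f>\sin 3$. Note that the full width of $(\tfrac13,1)$ is used here; your proposal to hit a small neighbourhood of $\tfrac13$ would not survive this shift argument, and you would have to fall back on Kronecker, which reintroduces the unproved universal quantification over $\alpha$.

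To repair the proof, replace the invocation of Lemma~\ref{moore} with Theorem~\ref{pw1} and drop the Kronecker step entirely, following the complete-accumulation-point argument above.
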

 \begin{proof}
Fix $\eta\in \Gamma$ so that $\theta_\eta$ is a complete accumulation point of $\{\theta_\xi:\xi\in \Gamma\}$. Fix $p<\omega$ such that
  $\frac{1}{3}<frac(p\theta_\eta )<\frac{1}{2}$. Choose an open neighbourhood $O$ of $\theta_\eta$ such that for any $x,y,z\in O$, $\frac{1}{3}<frac(px+y-z)<\frac{1}{2}$. Let $\Gamma'=\{\alpha\in \Gamma: \theta_\alpha\in O\}$.\medskip

By \cite[Theorem 9]{pw} (see also Theorem \ref{pw1} in Section 5), fix an uncountable set $A\subset \Gamma'$ and an uncountable pairwise disjoint family $\mathscr{B}\subset [\Gamma']^{2}$ such that for any $\alpha\in A$, for any $b\in \mathscr{B}\cap [\omega_1\setminus \alpha]^2$,
$$osc(\alpha,b(1))=osc(\alpha,b(0))+p.$$
 Then by our choice of $\Gamma'$, for any $\alpha\in A$, for any $b\in \mathscr{B}\cap [\omega_1 \setminus \alpha]^2$, there is some $j<2$ such that 
 $$frac(\theta_\alpha osc(\alpha,b(j))+\theta_{b(j)})\in (\frac{1}{3},1).$$
 To see this, assume that $c\stackrel{\triangle}{=}frac(\theta_\alpha osc(\alpha,b(0))+\theta_{b(0)})\leq \frac{1}{3}$. Then
 \begin{eqnarray*}
 && frac(\theta_\alpha osc(\alpha,b(1))+\theta_{b(1)})\\
  &=& frac(\theta_\alpha osc(\alpha,b(0))+\theta_{b(0)}+ p\theta_\alpha+\theta_{b(1)}-\theta_{b(0)})\\
 &= & frac(c+p\theta_\alpha+\theta_{b(1)}-\theta_{b(0)})\in (\frac{1}{3}, \frac{5}{6})
 \end{eqnarray*}
 
 Hence $w_{b(j)}(\alpha)> \sin 3$ for some $j<2$.
 \end{proof}

Recall  that the extent of a space $X$ is define as
$$e(X)=\sup\{|D|: D\subset X \text{ is closed and discrete } \}.$$
\begin{prop}\label{prop19}
  $e(vec(\mathscr{L})^2)=\omega_1$. Moreover, there is an uncountable subset $X\subset \mathscr{L}^2$ such that $X$ is closed discrete in $vec(\mathscr{L})^2$. In particular, $vec(\ms{L})^2$ is not Lindel\"of.
\end{prop}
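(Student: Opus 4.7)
The plan is to apply Proposition \ref{prop2} with $\Gamma = \omega_1$ to obtain an uncountable $A \subset \omega_1$ and an uncountable pairwise disjoint $\mathscr{B} \subset [\omega_1]^2$ exhibiting the $\sin 3$-separation property, and set
\[
X = \{(w_{b(0)}, w_{b(1)}) : b \in \mathscr{B}\} \subset \mathscr{L}^2.
\]
Pairwise disjointness of $\mathscr{B}$ combined with $w_\beta(\beta) = \theta_\beta \neq 0$ makes the assignment $b \mapsto (w_{b(0)}, w_{b(1)})$ injective, so $|X| = \aleph_1$. Once $X$ is shown to be closed discrete in $vec(\mathscr{L})^2$, the lower bound $e(vec(\mathscr{L})^2) \geq \aleph_1$ is immediate; the matching upper bound $e \leq \aleph_1$ follows from $e \leq w(vec(\mathscr{L})^2) \leq w(\mathbb{R}^{\omega_1}) = \aleph_1$; and non-Lindel\"ofness is automatic since a regular space containing an uncountable closed discrete subspace cannot be Lindel\"of.

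To verify closed discreteness, fix $(x, y) \in vec(\mathscr{L})^2$, write $x = \sum_i r_i w_{\gamma_i}$ and $y = \sum_j s_j w_{\delta_j}$, and let $\sigma < \omega_1$ exceed every $\gamma_i$ and $\delta_j$, so that $x(\mu) = y(\mu) = 0$ for all $\mu > \sigma$. Pick $\alpha \in A$ with $\alpha > \sigma$ (possible since $A$ is uncountable, hence cofinal in $\omega_1$); the basic open neighbourhood
\[
V_\alpha = \{(u, v) \in vec(\mathscr{L})^2 : |u(\alpha)| < \sin 3 \text{ and } |v(\alpha)| < \sin 3\}
\]
contains $(x, y)$. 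By Proposition \ref{prop2}, every $b \in \mathscr{B}$ with $\alpha \leq b(0)$ satisfies $w_{b(j)}(\alpha) > \sin 3$ for some $j < 2$, and hence $(w_{b(0)}, w_{b(1)}) \notin V_\alpha$. Consequently $V_\alpha \cap X$ lies in the countable set $R = \{(w_{b(0)}, w_{b(1)}) : b \in \mathscr{B},\ b(0) < \alpha\}$.

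The principal technical step is refining $V_\alpha$ to a basic open neighbourhood of $(x, y)$ meeting $X$ in at most one point, in spite of the possibly countably infinite residue $R$. My strategy is to fix $\tau < \omega_1$ exceeding every $b(1)$ that appears in $R$ (possible since $R$ is countable) and to work inside the countable-dimensional metrizable subspace $E_\tau \times E_\tau$, where $E_\tau = \mathrm{span}\{w_\beta : \beta \leq \tau\}$ with the topology induced from $\mathbb{R}^{[0,\tau]}$. Both $(x, y)$ and every element of $R$ lie in $E_\tau \times E_\tau$, and in this metrizable setting closure coincides with sequential closure, so it suffices to preclude any sequence $z_{b_n} \in R \setminus \{(x, y)\}$ from converging coordinatewise to $(x, y)$. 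After passing to a subsequence with $b_n(0)$ strictly increasing to some $\gamma^* \leq \alpha$, choose an auxiliary $\alpha' \in A \cap (\sigma, \gamma^*)$---possible provided $\alpha$ was originally selected so that $A$ has further elements between $\sigma$ and $\alpha$, which only requires avoiding the initial element of $A \cap (\sigma, \omega_1)$---and apply Proposition \ref{prop2} at $\alpha'$: for all $n$ large enough that $b_n(0) > \alpha'$, one obtains $\max_j |w_{b_n(j)}(\alpha')| > \sin 3$, contradicting the required convergence $w_{b_n(j)}(\alpha') \to x(\alpha') = y(\alpha') = 0$. This rules out sequential accumulation in $E_\tau \times E_\tau$, so a suitable basic open refinement of $V_\alpha$ lifts to a neighbourhood in $vec(\mathscr{L})^2$ meeting $X$ in at most one point, completing the verification that $X$ is closed discrete.
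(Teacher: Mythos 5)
Your overall strategy differs from the paper's. The paper does not reduce to a countable residue and argue sequentially; instead it refines $A$ and $\mathscr{B}$ further (going to uncountable subsets) so that, in addition to the conclusion of Proposition~\ref{prop2}, the pairs in $\mathscr{B}$ are linearly ordered by $<$ and an element of $A$ sits between any two consecutive pairs. With those two structural properties in hand, it sets $\tau=\max\{\xi: x(\xi)\neq 0 \text{ or } y(\xi)\neq 0\}$ (the \emph{last active coordinate}, not an ordinal past the support as your $\sigma$ is) and writes down an explicit neighbourhood $O_1\cap O_2\cap O_3$: $O_2$ uses the coordinate $\tau$ to wipe out every $w_b$ with $b(1)<\tau$ at a stroke (since $x(\tau)\neq 0$ or $y(\tau)\neq 0$ while such $w_b$ vanish there), the linear ordering of $\mathscr{B}$ makes $O_1$ remove only a single offending point with $\tau\in[b(0),b(1)]$, and the interlacing of $A$ with $\mathscr{B}$ makes $O_3$ (built from $\gamma=\min(A\setminus(\tau+1))$) handle everything with $b(0)>\tau$ except one explicitly removed $b_{\min}$. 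No metrizability or sequential argument is needed.

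Your argument, by contrast, has a real gap at the step ``choose an auxiliary $\alpha'\in A\cap(\sigma,\gamma^*)$.'' Here $\gamma^*=\sup_n b_n(0)\leq \alpha$ depends on the hypothetical convergent sequence and can land arbitrarily low: nothing forces $\gamma^*>\sigma$, and even if $\gamma^*>\sigma$ nothing forces $A$ to meet the countable interval $(\sigma,\gamma^*)$. Your proposed fix---choosing $\alpha$ to avoid the least element of $A\cap(\sigma,\omega_1)$---only guarantees that $A\cap(\sigma,\alpha)$ is nonempty, not that $A\cap(\sigma,\gamma^*)$ is; if $\gamma^*$ is below $\min(A\cap(\sigma,\omega_1))$, or below $\sigma$ itself, there is simply no $\alpha'$ to use. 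The underlying reason the gap is hard to close without extra structure is that with only the raw output of Proposition~\ref{prop2} there may be infinitely many $b\in\mathscr{B}$ with $b(0)$ below the support of $(x,y)$ and $b(1)$ above it (nested pairs), and Proposition~\ref{prop2} gives no control at coordinates $<b(0)$. This is exactly what the paper's preliminary refinement of $\mathscr{B}$ to a $<$-chain with $A$ interlaced is designed to kill. If you add that refinement (which the paper obtains by going to uncountable subsets before defining $X$), your sequence-based argument can be repaired---the interlacing then supplies elements of $A$ cofinally below $\gamma^*$, and $O_2$-style reasoning at $\tau=\max\{\xi: x(\xi)\neq 0 \text{ or } y(\xi)\neq 0\}$ forces $\gamma^*>\tau$---but as written the claim that a suitable $\alpha'$ exists is unjustified.
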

\begin{proof}
  By Proposition \ref{prop2},  fix an uncountable set $A\subset \omega_1$ and an uncountable pairwise disjoint family $\mathscr{B}\subset [\omega_1]^{2}$ such that
  \begin{enumerate}
  \item for any $\alpha\in A$, for any $b\in \mathscr{B}\cap [\omega_1\setminus \alpha]^2$, there is some $j<2$, $w_{b(j)}(\alpha)> \sin 3$. 
  \end{enumerate}
  Going to uncountable subset,  assume moreover that  
    \begin{enumerate}\setcounter{enumi}{1}
  \item  for any $a\neq b$ in $\ms{B}$, either $a<b$ or $b<a$;
  \item for any $a<b$ in $\ms{B}$, there exists $\alpha \in A$ such that $a< \alpha <b$.
  \end{enumerate}
  
    Denote $X=\{w_b: b\in \mathscr{B}\}$ where $w_b=(w_{b(0)},w_{b(1)})$. In what follows, we show that $X$
  is closed discrete in $vec(\mathscr{L})^2$ and thus complete the proof. It suffices to prove that every point $(x,y)$ in $vec(\mathscr{L})^2$ has an open neighbourhood disjoint from
  $X\setminus\{(x,y)\}$.

  Now fix $(x,y)\in vec(\mathscr{L})^2$.  Let 
  $$\tau=\max\{\xi<\omega_1: x(\xi)\neq 0 \text{ or } y(\xi)\neq 0\}$$ 
  and $\tau=0$ if $x=y=\bf{0}$.
  Note by our definition of $w_\beta$'s, the maximal exists if one of $x$ and $y$ is not $\bf{0}$. 
  
  By (2), there is at most one $b\in \mathscr{B}$ such that  $\tau\in [b(0),b(1)]$ and $(x,y)\neq w_b$. Let
  $$O_1=vec(\mathscr{L})^2\setminus \{w_b: \tau\in [b(0), b(1)] \text{ and }(x,y)\neq w_b\}.$$
  
    Let  $$O_2=\{(s,t): s(\tau)\neq 0\text{ or } t(\tau)\neq 0\}\text{ if } \tau\neq 0\text{ and}$$
  $$O_2=vec(\mathscr{L})^2\text{ if }  \tau=0.$$
  
    Let $\gamma=\min (A\setminus (\tau+1))$ and $b_{min}$ be the element in $\mathscr{B}\cap [[\tau,\omega_1)]^2$ minimizing the first coordinate.
  $$O_3=\{(s,t): s(\gamma)<\sin 3\text{ and } t(\gamma)<\sin 3\}\setminus \{ w_{b_{min}}\}.$$
  
    Now we check that $O_1\cap O_2\cap O_3 $ is an open neighbourhood of $(x,y)$ disjoint from $X\setminus \{(x,y)\}$. Fix $w_b\in X\setminus \{(x,y)\}$.\medskip

  \textbf{Case 1}:  $\tau\in [b(0),b(1)]$.

  Then $w_b\notin O_1$.\medskip

  \textbf{Case 2}: $b(1)<\tau$.

  Then $w_b\notin O_2$.\medskip

  \textbf{Case 3}: $\tau<b(0)$.

  If $b=b_{min}$, then $w_b\notin O_3$. 
  
  Now suppose $b\neq b_{min}$. Then by (2), $b_{min}<b$. Together with  (3), we get that $b(0)>\gamma$. Hence by (1), $w_b\notin O_3$.  
\end{proof}

Recall that basic open sets of $(\mathbb{R}^{\omega_1})^2$ can be represented as finite intersections of $O_{\alpha,I,J}=\{(s,t): s(\alpha)\in I, t(\alpha)\in J\}$ where $\alpha<\omega_1$ and $I,J$ are open rational intervals. So there are only $\omega_1$ many basic open sets.
 
 The proof of Proposition 14 and Lemma 15 in \cite{pw} actually gives the following. 
 \begin{lem}[\cite{pw}]\label{lem20}
 Suppose that $X\subset \ms{L}^2$ is uncountable and closed discrete in $vec(\ms{L})^2$. Then the following two statements hold:
 \begin{enumerate}
 \item Suppose that $X=X_0\cup X_1$ is a partition such that 
 \begin{itemize}
 \item[$(\star)$] for any basic open set $U$, if $|U\cap X|=\omega_1$, then $  |U\cap X_0|=|U\cap X_1|=\omega_1$. 
 \end{itemize}
  If $U^0\supset X_0, U^1\supset X_1$ are open sets, then $U^0\cap U^1\cap \ms{L}^2\neq \emptyset$.
  
 \item If for each $x\in X$, $N_x$ is an open neighbourhood of $x$, then there is an infinite set $\{x_p: p<\omega\}\in [X]^\omega$ such that $\ms{L}^2\cap  \bigcap_{p<\omega} N_{x_p}\neq \emptyset$.
 \end{enumerate}
 \end{lem}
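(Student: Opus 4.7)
The plan is to imitate the proofs of \cite[Proposition~14]{pw} and \cite[Lemma~15]{pw}, with Corollary~\ref{cor13} playing the combinatorial role that \cite[Lemma~6]{pw} plays there. Write each $x \in X$ as $(w_{\alpha_x}, w_{\beta_x})$ for some $\alpha_x, \beta_x < \omega_1$.

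For part (1), for each $t<2$ and each $x \in X_t$ pick a basic open neighborhood $V_x \subset U^t$. Each $V_x$ is determined by a finite coordinate set $F_x \subset \omega_1$ together with open rational intervals $I^0_{x,\gamma}, I^1_{x,\gamma}$ for $\gamma \in F_x$ constraining the first and second components. Passing to uncountable $\tilde X_t \subset X_t$ (both still uncountable by property $(\star)$) via a $\Delta$-system and pigeonhole refinement, I may assume that the $F_x$ form a $\Delta$-system with common root, all interval data are uniform within each $\tilde X_t$, and the combinatorial position of $\alpha_x, \beta_x$ relative to $F_x$ is uniform. The goal is to produce $(w_\alpha, w_\beta) \in \mathscr{L}^2$ together with $x^0 \in \tilde X_0$ and $x^1 \in \tilde X_1$ such that $(w_\alpha, w_\beta) \in V_{x^0} \cap V_{x^1}$. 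Treating the two sides as separate $\mathscr{B}_p$'s in Corollary~\ref{cor13} (with $l_p = 2$ encoding the pair $(\alpha_x, \beta_x)$) and letting $\mathscr{A}$ consist of the non-root portions of the $F_x$'s, the corollary produces the required simultaneous interval placements for the finitely many non-root coordinate-component conditions. The root conditions are compatible by construction: the intervals at root coordinates agree between $V_{x^0}$ and $V_{x^1}$ via the $\Delta$-system refinement, and are met by $(w_\alpha, w_\beta)$ by a preliminary application of the same oscillation combinatorics.

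For part (2), the same refinement reduces each $N_x$ to finitely many interval constraints on the values of the two components. I iterate property $(1\mathscr{B})$ countably often: having chosen $x_0, \ldots, x_{p-1}$, the set of admissible witnesses $(w_\alpha, w_\beta) \in \mathscr{L}^2 \cap \bigcap_{q<p} N_{x_q}$ is uncountable (by the same argument as for part (1), applied to $p$ sides instead of $2$), which allows one to choose $x_p$ so that the next constraint remains solvable. A compactness argument along closed subintervals of the open rational intervals appearing in the $N_{x_p}$'s then yields a point of $\mathscr{L}^2$ lying in $\bigcap_{p<\omega} N_{x_p}$.

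The main obstacle is that $w_\beta(\gamma) = f(\mathrm{frac}(\theta_\gamma \cdot osc(\gamma, \beta) + \theta_\beta))$ is rigidly determined by the oscillation function, so its value cannot be chosen freely. The needed freedom---to drive the values into arbitrary prescribed open intervals simultaneously at finitely many coordinates and in both components---is supplied precisely by Corollary~\ref{cor13}, whose underlying engine combines Lemma~\ref{moore} (varying $osc$ by any specified shift) with Kronecker's theorem (placing the resulting fractional parts wherever we want). Once this freedom is established, both parts follow the template of the corresponding arguments in \cite{pw}.
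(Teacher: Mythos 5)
Your outline follows the right combinatorial template (Corollary~\ref{cor13} driving the values of $w_\beta$ into prescribed intervals), but there are two genuine gaps, one in each part.

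\textbf{Part (1): root compatibility is not established.}
After passing to a $\Delta$-system with root $R$ and refining the interval data by pigeonhole, the intervals at the root coordinates need not ``agree between $V_{x^0}$ and $V_{x^1}$,'' as you assert. Pigeonhole makes the root intervals \emph{uniform within each} $\tilde X_t$, but the two uniform patterns can be disjoint: nothing forces an $x^0 \in \tilde X_0$ (with $V_{x^0} \subset U^0$) and an $x^1 \in \tilde X_1$ (with $V_{x^1} \subset U^1$) to impose compatible constraints at a common coordinate $\gamma \in R$. This is exactly where $(\star)$ has to do real work. You invoke $(\star)$ only to conclude that $X_0$ and $X_1$ are uncountable, which is the trivial use. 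The paper's proof instead fixes $x \in X_0 \setminus M$ for an elementary submodel $M$, splits a basic neighbourhood of $x$ inside $U^0$ into its $M$-part $U_0$ and its tail part, and then applies $(\star)$ to $U_0$: since $U_0 \cap X_0 \ni x$ is uncountable (by reflection), $(\star)$ forces $U_0 \cap X_1$ to be uncountable as well, so one can pick $y \in X_1$ that \emph{already} satisfies the root constraint and take the index families $B_0, B_1$ from a basic neighbourhood $U_y' \times U_y'' \subset U_0 \cap U^1$. That is the mechanism that makes the root conditions consistent, and it is missing from your argument. (There is also a role-confusion in feeding Corollary~\ref{cor13}: the single $a \in \mathscr{A}$ in the corollary's conclusion gives \emph{one} coordinate set, so you cannot simultaneously realize the disjoint tails $F'_{x^0}$ and $F'_{x^1}$ unless you re-encode; in the paper's approach only $X_0$ contributes the coordinate family $\mathscr{A}$, while $X_1$ contributes the index families $B_0, B_1$.)

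\textbf{Part (2): the closing compactness step has no engine.}
Even granting that one can keep the witness set $\mathscr{L}^2 \cap \bigcap_{q<p} N_{x_q}$ uncountable at every finite stage, a nested decreasing sequence of uncountable subsets of $\mathscr{L}^2$ need not have nonempty intersection, and shrinking the open intervals to closed subintervals does not help: compactness of the product $[\,\overline{I}_{i,j}\,]^{\text{coordinates}}$ would only give a point of $\mathbb{R}^{\omega_1} \times \mathbb{R}^{\omega_1}$ satisfying the constraints, not a point of the (non-closed, non-compact) set $\mathscr{L}^2$. There is nothing in your argument that forces the limiting constraints to be realized by an actual pair $(w_{\beta}, w_{\beta'})$. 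The paper avoids this entirely: after a Kronecker-theorem reduction producing a \emph{single finite bound} $n$, Lemma~\ref{moore} is applied once to get $\omega$-many $a_p$'s and only $n$-many $b_m = (\beta_m, \beta'_m)$'s with the uniform oscillation shift; then for each $p$ there is a pair $(m,m') \in n \times n$ with $(w_{\beta_m}, w_{\beta'_{m'}}) \in N_{x_p}$, and a finite pigeonhole over $n^2$ pairs gives one $(w_{\beta_m}, w_{\beta'_{m'}})$ lying in infinitely many $N_{x_p}$. The finiteness of $n$ (not compactness of the codomain) is what makes the intersection nonempty.
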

 For completeness, we sketch a proof here.
 \begin{proof}[Sketch of the proof.]
 (1) Let $M$ be a countable elementary submodel of $H(\aleph_2)$ containing everything relevant and  $\delta=M\cap \omega_1$. Fix $x\in X_0\setminus M$. Find  basic open set $U_0\in M$, $b\in [\omega_1\setminus M]^{k}$ and rational intervals $\langle I_{i,j}: i< k, j<2\rangle$ such that 
 $$x\in U_0\cap \bigcap_{i<k} O_{b(i), I_{i,0}, I_{i,1}} \subset U^0$$
 where $O_{\alpha,I,J}=\{(s,t): s(\alpha)\in I, t(\alpha)\in J\}$.
 By elementarity, we can find an uncountable pairwise disjoint $\ms{A}\subset [\omega_1]^k$ such that for any $a\in \ms{A}$,
 \begin{enumerate}[$(i)$]
 \item $U_0\cap \bigcap_{i<k} O_{a(i), I_{i,0}, I_{i,1}} \subset U^0.$
 \end{enumerate}
 
 By $(\star)$ and elementarity, $U_0\cap X_1$ is uncountable. Then for any $y\in U_0\cap X_1$, choose  basic open set $U_y=U_y'\times U_y''$ such that $y\in U_y\subset U_0\cap U^1$.
 
 Since $\ms{L}$ is hereditarily Lindel\"of, we can find $y\in U_0\cap X_1$ such that both $U_y'\cap \ms{L}$ and $U_y''\cap \ms{L}$ are uncountable. In other words,
 $$B_0=\{\alpha: w_\alpha\in U_y'\}\text{ and }B_1=\{\alpha: w_\alpha\in U_y''\}$$
 are uncountable.
 
 Applying Corollary \ref{cor13}, we get $a\in \ms{A}$ and $\beta_0\in B_0, \beta_1\in B_1$ such that for any $i<k$, 
 $w_{\beta_0}(a(i))\in I_{i,0} \text{ and } w_{\beta_1}(a(i))\in I_{i,1}.$
 Combining with $(i)$ and the fact that $(w_{\beta_0}, w_{\beta_1})\in U_y'\times U_y''\subset U_0\cap U^1$, we get $(w_{\beta_0}, w_{\beta_1})\in U^0\cap U^1$.\medskip
 
 (2)  Shrinking $N_x$ and going to uncountable subset of $X$, we may assume that for some basic open set $O$, $k<\omega$ and rational intervals $\langle I_{i,j}: i<k, j<2\rangle$,
 \begin{enumerate}[$(i)$]
 \item for any $x\in X$, there is $a_x\in [\omega_1]^k$ such that $N_x=O\cap \bigcap_{i<k} O_{a_x(i), I_{i,0}, I_{i,1}}$;
 \item $\ms{A}=\{a_x: x\in X\}$ is pairwise disjoint.
 \end{enumerate}
  Fix uncountable $B, B'\subset \omega_1$ such that $\{(w_\alpha, w_\beta): (\alpha,\beta)\in B\times B'\}\subset O$.
  
  Now find a sequence of rational intervals $\langle J_{i,j} \subset (0,1):i<k, j<2\rangle$ and $C\in [B]^{\omega_1}$, $C'\in [B']^{\omega_1}$ such that
for any $\beta\in C$, $\beta'\in C'$, $i<k$, $y\in J_{i,0}$, $z\in J_{i,1}$, $$f(frac(y+\theta_\beta))\in I_{i,0}\text{ and }f(frac(z+\theta_{\beta'}))\in I_{i,1}.$$

By Kronecker's Theorem, we can find a natural number $n$ and uncountable $\mathscr{A}'\subset \mathscr{A}$ such that for any $a\in \mathscr{A}'$, for any $c\in \mathbb{R}^k$, there are $m,m'<n$ such that $frac(m\theta_{a(i)}+c(i))\in J_{i,0}$ and $frac(m'\theta_{a(i)}+c(i))\in J_{i,1}$ for any $i<k$.\medskip

Passing to a pairwise disjoint subset of $C\times C'$, we can apply Lemma \ref{moore} to get $\{a_p: p<\omega\}\subset \mathscr{A}'$ and $\langle b_m=(\beta_m,\beta_m')\in C\times C':m<n\rangle$ such that $osc(a_p(i),b_m(j))=osc(a_p(i),b_0(j))+m$ for $p<\omega, i<k, j<2$ and $m<n$. Fix for each $p$, $x_p\in X$ such that $a_p=a_{x_p}$.

By the property of $\mathscr{A}'$ and $n$, for each $p<\omega$, there are $m,m'<n$ such that
$frac(\theta_{a_p(i)}osc(a_p(i),\beta_m))\in J_{i,0}$ and $frac(\theta_{a_p(i)}osc(a_p(i),\beta'_{m'}))\in J_{i,1}$ whenever $i<k$.  In other words, $(w_{\beta_m}, w_{\beta_{m'}'})\in N_{x_p}$.

So for some $m, m'<n$, $(w_{\beta_m}, w_{\beta_{m'}'})\in N_{x_p}$ for infinitely many $p$.
  \end{proof}
 
 Now the following is immediate.
 \begin{cor}\label{cor21}
 $vec(\ms{L})^2$ is neither normal nor weakly paracompact.
 \end{cor}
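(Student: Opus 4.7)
The plan is to deduce both failures from the combined force of Proposition \ref{prop19} (giving an uncountable closed discrete $X\subset \mathscr{L}^2$) and the two conclusions of Lemma \ref{lem20}. For non-normality, I will partition $X$ into two uncountable closed sets that cannot be separated; for failure of weak paracompactness, I will exhibit an open cover whose refinements must fail point-finiteness.

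For non-normality: there are only $\omega_1$ basic open sets in $(\mathbb{R}^{\omega_1})^2$, so I can enumerate those $\langle U_\alpha : \alpha<\omega_1\rangle$ that have uncountable intersection with $X$. By transfinite recursion, pick at stage $\alpha$ two distinct points $x_\alpha^0, x_\alpha^1 \in U_\alpha\cap X$ not used at any earlier stage; this is possible because $|U_\alpha\cap X|=\omega_1$. Set $X_0=\{x_\alpha^0:\alpha<\omega_1\}$ and $X_1=X\setminus X_0$. Then $X_0$ and $X_1$ are disjoint, both meet every $U_\alpha$ in an uncountable set, and therefore satisfy condition $(\star)$ of Lemma \ref{lem20}(1). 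Since $X$ is closed discrete, so are $X_0$ and $X_1$. If $U^0\supset X_0$ and $U^1\supset X_1$ were disjoint open sets, then Lemma \ref{lem20}(1) would give $U^0\cap U^1\cap \mathscr{L}^2\neq\emptyset$, a contradiction. Hence $vec(\mathscr{L})^2$ is not normal.

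For failure of weak paracompactness: because $X$ is closed discrete, for each $x\in X$ choose a basic open neighbourhood $N_x$ with $N_x\cap X=\{x\}$. Consider the open cover $\mathscr{U}=\{vec(\mathscr{L})^2\setminus X\}\cup\{N_x : x\in X\}$. Suppose for contradiction that $\mathscr{V}$ is a point-finite open refinement. For each $x\in X$ pick some $V_x\in\mathscr{V}$ with $x\in V_x$. Since $V_x$ refines an element of $\mathscr{U}$ and contains $x\in X$, it must be contained in some $N_y$, and then $x\in N_y$ forces $y=x$; thus $V_x\subset N_x$. In particular the assignment $x\mapsto V_x$ is injective, since $V_x=V_y$ with $x\neq y$ would give $x\in N_y$, contradicting $N_y\cap X=\{y\}$. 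Now apply Lemma \ref{lem20}(2) to the family $\{V_x:x\in X\}$ to obtain infinitely many distinct $x_p\in X$ with $\mathscr{L}^2\cap\bigcap_{p<\omega} V_{x_p}\neq\emptyset$. Any point of this intersection belongs to infinitely many distinct members of $\mathscr{V}$, contradicting point-finiteness.

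The main obstacle is conceptual bookkeeping rather than new combinatorics: one must verify that the naive partition into $X_0,X_1$ really satisfies $(\star)$ (handled by the enumeration of basic open sets with uncountable trace), and that the refinement sets $V_x$ can indeed be indexed injectively by $X$ so that Lemma \ref{lem20}(2) applies to produce the desired contradiction. Once these points are in place the statement follows immediately from Proposition \ref{prop19} and Lemma \ref{lem20}.
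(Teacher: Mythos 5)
Your argument has the same overall structure as the paper's: take the uncountable closed discrete $X\subset\mathscr{L}^2$ from Proposition~\ref{prop19}, then deduce non-normality from Lemma~\ref{lem20}(1) via a partition satisfying $(\star)$, and deduce failure of weak paracompactness from Lemma~\ref{lem20}(2) via a cover by small neighbourhoods of the points of $X$. The weak-paracompactness half is correct in full detail; in particular, the observation that $x\mapsto V_x$ is injective (so that the common point landing in $\bigcap_p V_{x_p}$ really lies in infinitely many \emph{distinct} members of $\mathscr{V}$) is exactly the point the paper leaves implicit.

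There is, however, a gap in your construction of the partition for the normality half. You ``enumerate those $\langle U_\alpha:\alpha<\omega_1\rangle$ that have uncountable intersection with $X$'' and at stage $\alpha$ choose two fresh points of $U_\alpha\cap X$. If this enumeration is one-to-one, then the recursion only guarantees $x_\alpha^0\in X_0\cap U_\alpha$ and $x_\alpha^1\in X_1\cap U_\alpha$ --- a single point each, not uncountably many --- so the asserted conclusion ``both meet every $U_\alpha$ in an uncountable set'' does not follow, and $(\star)$ is not established. The fix is standard: re-index so that every basic open set $U$ with $|U\cap X|=\omega_1$ appears as $U_\alpha$ for $\omega_1$ many $\alpha$. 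Then $\{x_\alpha^0:U_\alpha=U\}\subset X_0\cap U$ and $\{x_\alpha^1:U_\alpha=U\}\subset X_1\cap U$ are each of size $\omega_1$, and $(\star)$ holds. (The paper itself does not construct such a partition; it simply invokes its existence, so you are supplying detail the paper omits --- but the detail needs the repetition.) With this one adjustment the proof is correct and coincides with the paper's.
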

 \begin{proof}
 First by Proposition \ref{prop19}, fix an uncountable $X\subset \ms{L}^2$ that is closed discrete in $vec(\ms{L})^2$.
 
 Then we prove that $vec(\ms{L})^2$ is not normal.    Let $X=X_0\cup X_1$ be a partition satisfying property $(\star)$ in Lemma \ref{lem20}. Then by Lemma \ref{lem20}, $X_0$ and $X_1$ are two disjoint closed sets that can not be separated by disjoint  open sets.
 
 Finally we prove that $vec(\ms{L})^2$ is not weakly paracompact.  For each $x\in X$, choose   an open neighbourhood $O_x$ of $x$ such that
$O_x\cap X=\{x\}$. By Lemma \ref{lem20}, $\{vec(\mathscr{L})^2\setminus X\}\cup \{O_x: x\in X\}$ is an open cover without  a point-finite open refinement. To see this, just note that every open refinement has a subset of form
$\{O_x'\subset O_x:$ $x\in X\}$ where $O_x'$ is an open neighbourhood of $x$.
 \end{proof}

 \section{Topological field}

A \emph{topological field} is a field endowed with a topology with respect to which all algebraic  operations are continuous.  

 In this section we will   construct an \emph{L field}---a topological field that is an L space---whose square is neither normal nor weakly paracompact.\medskip

 \begin{defn}
  Throughout this section, we fix a function $g: (0,1] \ra \mathbb{C}$ such that
  \begin{enumerate}
  \item $g(x)=x$ for $x\in [\frac{1}{3}, 1]$;
  \item for any integer $n\geq 3$, $g$ is a linear function on $[\frac{1}{n+1}, \frac{1}{n}]$;
  \item for any positive integer $n$, $g(\frac{1}{n})=p+qi$ for some rational number $p, q$;
  \item for any positive integer $n$, $g[(0, \frac{1}{n}]]$ is dense in $\mathbb{C}$. 
  \end{enumerate}
 
   \end{defn}
   
   We now fix an algebraically  independent set of reals $\{\theta_\alpha, \theta_\alpha':\alpha<\omega_1\}$ with the property ganranteed by the following lemma.

   \begin{lem}\label{nr}
   There is an algebraically  independent set of reals $\{\theta_\alpha, \theta_\alpha':\alpha<\omega_1\} $ such that
   for any $l<\omega$, for any $b\in [\omega_1]^l$, for any polynomials $P(z_0,...z_{l-1})$, $Q(z_0,...z_{l-1})$ with rational coefficients,
   $$\frac{P(\theta_{b(0)}+\theta_{b(0)}'i,...,\theta_{b(l-1)}+\theta_{b(l-1)}'i)}{Q(\theta_{b(0)}+\theta_{b(0)}'i,...,\theta_{b(l-1)}+\theta_{b(l-1)}'i)}\notin \mathbb{R} \eqno(NR)$$
   provided that $\frac{P(z_0,...z_{l-1})}{Q(z_0,...z_{l-1})}$ is not a constant.
   \end{lem}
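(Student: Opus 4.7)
The plan is to take any algebraically independent (over $\mathbb{Q}$) family of reals $\{\theta_\alpha, \theta_\alpha' : \alpha < \omega_1\}$, which exists by a routine transfinite recursion (the algebraic closure of any countable subfield of $\mathbb{R}$ is countable), and then verify that (NR) holds automatically. The key point is to recast (NR) as the non-vanishing of a certain rational-coefficient polynomial; algebraic independence then reduces the problem to showing that polynomial is not identically zero.

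Concretely, fix $l<\omega$, $b\in [\omega_1]^l$, and rational polynomials $P,Q$ with $P/Q$ non-constant. Substituting $z_k = x_k + y_k i$ and separating real and imaginary parts gives $P(z)=P_R(x,y)+iP_I(x,y)$ and $Q(z)=Q_R(x,y)+iQ_I(x,y)$, where $P_R,P_I,Q_R,Q_I \in \mathbb{Q}[x_0,y_0,\ldots,x_{l-1},y_{l-1}]$. A direct computation shows that (when $Q\neq 0$) $P/Q$ is real iff the polynomial $R := P_I Q_R - P_R Q_I$ vanishes at $(x,y)$, while $Q\neq 0$ iff $|Q|^2 = Q_R^2+Q_I^2\neq 0$. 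Since $Q\not\equiv 0$ as a complex polynomial, $|Q|^2$ is a nonzero real polynomial, so by algebraic independence of the $\theta$'s it does not vanish at $(\theta_{b(0)},\theta_{b(0)}',\ldots,\theta_{b(l-1)},\theta_{b(l-1)}')$. Hence (NR) reduces to the claim that $R\not\equiv 0$, which again via algebraic independence gives the desired non-vanishing at the evaluation point.

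The heart of the argument is to show that if the rational function $F(z):=P(z)/Q(z)$ with coefficients in $\mathbb{Q}$ is non-constant, then $F$ is not real-valued on its full domain in $\mathbb{C}^l$ (which is exactly $R\not\equiv 0$, since real-valuedness on all of $\mathbb{R}^{2l}$ in the $(x,y)$-coordinates is precisely real-valuedness on all of $\mathbb{C}^l$ in the $z$-coordinates). Because $F$ is non-constant, it depends non-trivially on some coordinate $z_j$. Choose complex constants $c_k$ for $k\neq j$ outside the thin locus where $F$ becomes constant in $z_j$ (or where $Q$ vanishes identically in $z_j$); the specialization $z_j \mapsto F(c_0,\ldots,c_{j-1},z_j,c_{j+1},\ldots,c_{l-1})$ is then a non-constant rational function of one complex variable. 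If $F$ were real-valued everywhere, this specialization would be real-valued on a cofinite subset of $\mathbb{C}$, contradicting the open mapping theorem for non-constant meromorphic functions. The only mild technical obstacle is the Zariski-density argument ensuring that non-constancy of $F$ persists after specializing all but one coordinate, which is standard: non-constancy of $F$ means some partial $\partial F/\partial z_j$ is a nonzero rational function, hence nonzero on a Zariski-open subset of $\mathbb{C}^{l-1}$, from which suitable $c_k$'s can be drawn.
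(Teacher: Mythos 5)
Your proof is correct, and it takes a genuinely different route from the paper. The paper constructs $\{\theta_\alpha,\theta_\alpha'\}$ inductively: at stage $\alpha$ it enumerates all non-constant single-variable rational functions $f_n$ with coefficients in the algebraic closure of the earlier reals, notes each $f_n^{-1}[\mathbb{R}]$ is nowhere dense (open mapping theorem), and invokes the Kuratowski--Ulam theorem to pick $(\theta_\alpha,\theta_\alpha')$ avoiding the resulting meager set while preserving algebraic independence. Verification of $(NR)$ is then done by specializing all but the last variable and appealing to the stage-$b(l-1)$ choice. You instead observe that algebraic independence over $\mathbb{Q}$ already forces $(NR)$: after separating real and imaginary parts, the imaginary part of the evaluated quotient is $R(\theta,\theta')/|Q|^2(\theta,\theta')$ where $R=P_IQ_R-P_RQ_I\in\mathbb{Q}[x,y]$, so $(NR)$ reduces to $R\not\equiv 0$, which you get from the open mapping theorem via a Zariski-generic specialization to one variable. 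Your approach buys you a stronger result (\emph{any} algebraically independent family works), dispenses with the moving-target enumeration and Kuratowski--Ulam entirely, and makes the role of algebraic independence transparent. The paper's approach avoids the need to argue that non-constancy survives specialization (a step your proof handles with a Zariski-density argument and which the paper's written proof actually glosses over, since the specialized $f(z)$ could a priori be constant if $P/Q$ does not depend on $z_{l-1}$); but your version is both cleaner and more self-contained.
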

   In other words, $Im(\frac{P(\theta_{b(0)}+\theta_{b(0)}'i,...,\theta_{b(l-1)}+\theta_{b(l-1)}'i)}{Q(\theta_{b(0)}+\theta_{b(0)}'i,...,\theta_{b(l-1)}+\theta_{b(l-1)}'i)})\neq 0$ where $Im(z)$ is the imaginary part of $z$. To prove the lemma, we will need the following  special case of Kuratowski-Ulam Theorem.

\begin{thm}[\cite{ku}]\label{ku}
  For any comeager $X\subset \mathbb{R}^{2}$, $\{x\in \mathbb{R}: X_x$ is comeager$\}$ is comeager where $X_x=\{y: (x,y)\in X\}$.
\end{thm}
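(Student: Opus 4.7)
The plan is to observe that any algebraically independent subset $\{\theta_\alpha,\theta_\alpha':\alpha<\omega_1\}\subset\mathbb{R}$ over $\mathbb{Q}$ already satisfies the NR condition. Since the transcendence degree of $\mathbb{R}$ over $\mathbb{Q}$ is at least $\omega_1$, such a subset exists, and the lemma follows by relabeling its elements. The remaining task is to verify NR automatically.

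Write $\tau_\beta:=\theta_\beta+\theta_\beta'i$ and fix $l<\omega$, $b\in[\omega_1]^l$, and $P,Q\in\mathbb{Q}[z_0,\ldots,z_{l-1}]$ with $P/Q$ non-constant. The key auxiliary polynomial is
\[F(\mathbf{u},\mathbf{v}):=P(\mathbf{u})Q(\mathbf{v})-P(\mathbf{v})Q(\mathbf{u})\in\mathbb{Q}[u_0,\ldots,u_{l-1},v_0,\ldots,v_{l-1}]\]
in $2l$ variables. Non-constancy of $P/Q$ forces $F\not\equiv0$: were $F\equiv0$, the identity $P(\mathbf{u})/Q(\mathbf{u})=P(\mathbf{v})/Q(\mathbf{v})$ in $\mathbb{Q}(\mathbf{u},\mathbf{v})$ would place both sides in $\mathbb{Q}(\mathbf{u})\cap\mathbb{Q}(\mathbf{v})=\mathbb{Q}$, contradicting non-constancy. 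Moreover, because $P,Q$ have real coefficients, $\overline{P(\tau)}=P(\bar\tau)$ and $\overline{Q(\tau)}=Q(\bar\tau)$, and so (assuming $Q(\tau)\neq0$) the condition $P(\tau)/Q(\tau)\in\mathbb{R}$ is equivalent to $F(\tau,\bar\tau)=0$.

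The substitution $u_j=\theta_j+\theta_j'i$, $v_j=\theta_j-\theta_j'i$ is an invertible $\mathbb{C}$-linear change of variables on $\mathbb{C}^{2l}$, hence transports the non-zero polynomial $F$ to a non-zero polynomial $G(\theta,\theta')\in\mathbb{Q}[i][\theta_0,\ldots,\theta_{l-1},\theta_0',\ldots,\theta_{l-1}']$. Writing $G=G_1+G_2i$ with $G_1,G_2\in\mathbb{Q}[\theta,\theta']$, at least one $G_s$ is a non-zero real polynomial; algebraic independence of $\{\theta_{b(j)},\theta_{b(j)}':j<l\}$ over $\mathbb{Q}$ then forces its value at the actual reals to be a non-zero real number, giving $F(\tau,\bar\tau)\neq0$ and hence $P(\tau)/Q(\tau)\notin\mathbb{R}$. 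The same real/imaginary decomposition applied directly to $Q$ (in place of $F$) simultaneously yields $Q(\tau)\neq0$, justifying the earlier assumption.

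The essential observation is the passage from non-constancy of $P/Q$ to non-vanishing of the auxiliary $F(\mathbf{u},\mathbf{v})$, coupled with the change of variables $(u_j,v_j)=(\theta_j+\theta_j'i,\theta_j-\theta_j'i)$ that rephrases ``$P(\tau)/Q(\tau)\in\mathbb{R}$'' as the vanishing of a complex polynomial expression in the real variables $\theta,\theta'$, allowing real algebraic independence to do the work. If one prefers to avoid the transcendence-basis shortcut, the same conclusion follows from a transfinite recursion on $\alpha<\omega_1$ choosing $(\theta_\alpha,\theta_\alpha')\in\mathbb{R}^2$ at each countable stage outside a meager ``bad set'' via Baire category, with the Kuratowski-Ulam theorem recalled above providing the fiberwise meagerness formulation if desired.
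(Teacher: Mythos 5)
Your proposal does not address the stated result at all. Theorem~\ref{ku} is the Kuratowski--Ulam theorem, a purely point-set-topological statement: given a comeager $X\subset\mathbb{R}^2$, the set of $x$ whose vertical fiber $X_x$ is comeager is itself comeager. A correct proof goes through the Banach category theorem / Banach--Mazur game or the classical argument of Kuratowski and Ulam for sets with the Baire property; it has nothing to do with rational functions, algebraic independence, or the field structure of $\mathbb{R}$. What you have written instead is a proof of Lemma~\ref{nr}, the existence of a family $\{\theta_\alpha,\theta_\alpha':\alpha<\omega_1\}$ satisfying the $(NR)$ condition. Your closing remark even treats Kuratowski--Ulam as an ingredient ``recalled above,'' which confirms the misreading: you have proved a downstream lemma that the paper derives \emph{from} Theorem~\ref{ku}, not Theorem~\ref{ku} itself.

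For what it is worth, as a proof of Lemma~\ref{nr} your argument is correct and attractive. You show directly that \emph{any} algebraically independent set of reals works, by passing from $P/Q$ to the auxiliary polynomial $F(\mathbf{u},\mathbf{v})=P(\mathbf{u})Q(\mathbf{v})-P(\mathbf{v})Q(\mathbf{u})$, performing the $\mathbb{C}$-linear invertible change of variables $(u_j,v_j)=(\theta_j+\theta_j'i,\theta_j-\theta_j'i)$, and invoking algebraic independence of the $\theta$'s and $\theta'$'s over $\mathbb{Q}$ to see that the real and imaginary parts of the transported polynomial cannot simultaneously vanish. This avoids the Baire-category machinery entirely: the paper's proof of Lemma~\ref{nr} builds the $\theta_\alpha,\theta_\alpha'$ by a transfinite recursion using Theorem~\ref{ku} and the Open Mapping Theorem to steer each new pair outside a meager set of bad choices, whereas you have turned the problem into a clean polynomial-identity verification. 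But the task here was to supply an independent proof of Theorem~\ref{ku}, and none has been given.
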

   
\begin{proof}[Proof of Lemma \ref{nr}.]
 We choose $\theta_\alpha, \theta_\alpha'$ inductively. At step $\alpha$, assume $\{\theta_\xi, \theta_\xi': \xi<\alpha\}$ has been constructed. 
 
 Let $\{\frac{P_n(z)}{Q_n(z)}: n<\omega\}$ enumerate all non-constant  rational functions (with single variable) with coefficients in the algebraic closure of $\{\theta_\xi, \theta'_\xi: \xi<\alpha\}$.
 
 First fix $n<\omega$ and denote $f_n(z)=\frac{P_n(z)}{Q_n(z)}$. Note that $f_n$ is analytic on all but finitely many points in $\mathbb{C}$. Then $f_n^{-1}[\mathbb{R}]$ is closed (in $dom(f_n)$) and by the Open Mapping Theorem,  has empty interior.  This shows that $f_n^{-1}[\mathbb{R}]$ is nowhere dense.
 
 Now $A_\alpha=\bigcup_{n<\omega} f_n^{-1}[\mathbb{R}]$ is meager. Then $X=\mathbb{C}\setminus A_\alpha$ is comeager. By Theorem \ref{ku}, we can  find $\theta_\alpha$ such that $\{\theta_\xi, \theta_\xi': \xi<\alpha\}\cup \{\theta_\alpha\}$  is algebraically independent and $X_{\theta_\alpha}$ is comeager. Then find $\theta_\alpha'\in X_{\theta_\alpha}$ such that $\{\theta_\xi, \theta_\xi': \xi\leq \alpha\} $  is algebraically independent. This finishes the induction at  $\alpha$ and hence the induction.\medskip
 
 We now check that $(NR)$ is satisfied.  Arbitrarily choose $l<\omega$,   $b\in [\omega_1]^l$,   polynomials $P(x_0,...x_{l-1})$, $Q(x_0,...x_{l-1})$ with rational coefficients such that $\frac{P(z_0,...z_{l-1})}{Q(z_0,...z_{l-1})}$ is not a constant. Let $f(z)=\frac{P(\theta_{b(0)}+\theta_{b(0)}'i,...,\theta_{b(l-2)}+\theta_{b(l-2)}'i, z)}{Q(\theta_{b(0)}+\theta_{b(0)}'i,...,\theta_{b(l-2)}+\theta_{b(l-2)}'i, z)}$. First note that $\theta_{b(l-1)}+\theta'_{b(l-1)}i\in dom (f)$ by algebraic independence. Then note by our construction, $\theta_{b(l-1)}+\theta'_{b(l-1)}i\notin A_{b(l-1)}$. So $f(\theta_{b(l-1)}+\theta'_{b(l-1)}i)\notin \mathbb{R}$. This shows that $(NR)$ is satisfied for $l, b , P, Q$. Since they are arbitrarily chosen, $(NR)$ is satisfied.
   \end{proof}
   
  \begin{defn}
   \begin{enumerate}
\item $\mathcal{L}=\{v_\beta: \beta<\omega_1\}$ where
\begin{displaymath}
   v_\beta(\alpha) = \left\{
     \begin{array}{lr}
       g(frac(\theta_\alpha osc(\alpha,\beta)+\theta_\beta))  & : \alpha<\beta\\
       \theta_\beta & : \alpha=\beta\\
       \theta_\beta+\theta_\beta'i & : \alpha> \beta.
     \end{array}
   \right.
\end{displaymath}
We view $\mathcal{L}$ as a subspace of $\mathbb{C}^{\omega_1}$, equipped with the product topology.
\item $field(\mc{L})$ is the topological field   generated by $\mc{L}$, where the addition $+$ and multiplication $\cdot$ are coordinatewise addition and coordinatewise multiplication.
\end{enumerate}
  \end{defn}
  
  We will show that $field(\mc{L})$ is an  L field. As in the topological vector space case, we will prove a property which is stronger than   hereditary Lindel\"ofness. We first fix a notation. For any $\alpha\leq \omega_1$, 
  $$K_\alpha\subset \mathbb{C}\text{ is the field generated by }\mathbb{Q}+\mathbb{Q}i\cup \{\theta_\xi, \theta_\xi': \xi<\alpha\}.$$
  
  \begin{prop}\label{prop23}
Let  $\mathscr{A}\subset [\omega_1]^k$ and
    $\mathscr{B}\subset [\omega_1]^l$ be uncountable families of pairwise disjoint sets. Then for every sequence of non-empty open sets $\langle U_s\subset\mathbb{C}: s<k \rangle$ and every non-constant rational function $\frac{P(z_0,...z_{l-1})}{ Q(z_0,...z_{l-1})}$ with   coefficients in $K_{\omega_1}$, there are $a\in \mathscr{A}, b\in \mathscr{B}$ such that $a<b$ and for all $s<k$, 
    $$\frac{P(v_{b(0)}(a(s)),..., v_{b(l-1)}(a(s)))}{Q(v_{b(0)}(a(s)),..., v_{b(l-1)}(a(s)))}\in U_s.$$ 
\end{prop}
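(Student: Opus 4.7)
The plan is to adapt the proof of Proposition \ref{prop1} to this complex-valued setting, with $g$ replacing $f$ and the rational function $P/Q$ replacing a linear combination. First, using the coherence of $osc$ exactly as in Proposition \ref{prop1}, I pass to uncountable subfamilies of $\mathscr{A}$ and $\mathscr{B}$ on which there is a fixed integer matrix $\langle c_{ij}:i<k,\,j<l\rangle$ with $c_{i0}=0$ satisfying $osc(a(i),b(j))=osc(a(i),b(0))+c_{ij}$ whenever $a\in\mathscr{A}$, $b\in\mathscr{B}$ and $a<b$. Then I fix a countable elementary submodel $M\prec H(\aleph_2)$ containing everything relevant (in particular $P$ and $Q$), set $\delta=M\cap\omega_1$, and fix arbitrary $a\in\mathscr{A}\setminus M$, $b\in\mathscr{B}\setminus M$; all coefficients of $P$ and $Q$ then lie in $K_\delta$, whereas the $\theta$'s indexed by the elements of $a$ and $b$ are algebraically independent over $K_\delta$.

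The analytic core of the argument is as follows. For each $s<k$ set
$$F_s(x)=\frac{P\bigl(g(frac(x+\theta_{b(0)})),\,g(frac(x+\theta_{a(s)}c_{s1}+\theta_{b(1)})),\,\ldots,\,g(frac(x+\theta_{a(s)}c_{s,l-1}+\theta_{b(l-1)}))\bigr)}{Q(\cdots)}.$$
I choose $\epsilon>0$ small enough that $F_s$ is defined and continuous on $(-\theta_{b(0)},-\theta_{b(0)}+\epsilon)$ for every $s$. On this interval the first argument equals $g(x+\theta_{b(0)})$ and traces $g[(0,\epsilon)]$, which is dense in $\mathbb{C}$ by condition (4) in the definition of $g$; the remaining arguments converge continuously to $w_{s,j}:=g(frac(-\theta_{b(0)}+\theta_{a(s)}c_{sj}+\theta_{b(j)}))$. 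Provided the one-variable restriction $z_0\mapsto(P/Q)(z_0,w_{s,1},\ldots,w_{s,l-1})$ is non-constant, it is a non-constant rational function of $z_0$ and hence omits at most finitely many points of $\mathbb{C}$; together with the density of $g[(0,\epsilon)]$ this forces the range of $F_s$ on the interval to meet $U_s$.

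The remainder of the proof follows Proposition \ref{prop1} almost verbatim. Choose open neighbourhoods of the $\theta_{a(s)}$'s and $\theta_{b(j)}$'s in $M$ together with an open target interval $O_s\subset(0,\epsilon)$ for each $s$ so that the conclusion $F_s\in U_s$ persists under these perturbations; pass to the corresponding uncountable subfamilies $\mathscr{A}''\subset\mathscr{A}$ and $\mathscr{B}'\subset\mathscr{B}$ which, by elementarity, lie in $M$; apply Kronecker's Theorem and compactness of $[0,1]^k$ to secure a uniform bound $m<\omega$ such that for every $a\in\mathscr{A}''$ and every $c\in\mathbb{R}^k$ there is $m'<m$ with $frac(\theta_{a(s)}m'+c(s))\in O_s$ for all $s<k$; then apply Lemma \ref{moore} to produce $a\in\mathscr{A}''$ and $\{b_{s'}:s'<m\}\subset\mathscr{B}'$ with $osc(a(s),b_{s'}(0))=osc(a(s),b_0(0))+s'$ for all $s,s'$, and select the $b_{s'}$ that realises all $k$ target intervals at once; this $a$ and $b_{s'}$ witness the proposition.

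The main obstacle is justifying that $z_0\mapsto(P/Q)(z_0,w_{s,1},\ldots,w_{s,l-1})$ is non-constant, together with the non-vanishing of the corresponding denominator on the chosen interval. The $w_{s,j}$'s are far from generic: each lies in the $(\mathbb{Q}+\mathbb{Q}i)$-span of $\{1,\theta_{a(s)},\theta_{b(0)},\theta_{b(j)}\}$ with coefficients determined by which affine piece of $g$ contains $frac(-\theta_{b(0)}+\theta_{a(s)}c_{sj}+\theta_{b(j)})$, so $(w_{s,1},\ldots,w_{s,l-1})$ is confined to a very special algebraic subvariety of $\mathbb{C}^{l-1}$. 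The argument must exploit the algebraic independence of $\theta_{a(s)}$ and $\theta_{b(0)},\ldots,\theta_{b(l-1)}$ over $K_\delta$, quite possibly through property $(NR)$ of Lemma \ref{nr}, to rule out pathological cancellations that would collapse the restriction to a constant. In the degenerate case that $P/Q$ as a multi-variable rational function does not depend on $z_0$ at all, one reduces to the analogous problem with $l-1$ variables, ultimately to $l=1$ where non-constancy of the single-variable $P/Q$ is given.
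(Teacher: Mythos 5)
Your proposal mirrors the paper's argument closely — same passage to a frozen matrix $\langle c_{sj}\rangle$ with $c_{s0}=0$, same elementary-submodel setup, same appeal to the density clause (4) for $g$ near $0^+$ to hit any nonempty open set, same Kronecker-plus-Lemma~\ref{moore} finish — but the step you single out as ``the main obstacle'' is a genuine gap that you flag without closing, and the lead you suggest ($(NR)$ from Lemma~\ref{nr}) is not the tool the argument uses.

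The missing step is exactly the paper's Claim: setting $r_j = g(frac(-\theta_{b(0)}+\theta_{a(s)}c_{sj}+\theta_{b(j)}))$ for $0<j<l$, one must show the one-variable rational function $F_s(z)=P(z,r_1,\dots,r_{l-1})/Q(z,r_1,\dots,r_{l-1})$ is non-constant. The correct observation here, which your proposal comes close to but does not draw, is that the very ``special form'' you worry about is what makes the argument work: $g$ is piecewise $(\mathbb{Q}+\mathbb{Q}i)$-affine, so $r_j$ is a $(\mathbb{Q}+\mathbb{Q}i)$-affine expression in $\theta_{b(j)}$ (with the other data $\theta_{a(s)},\theta_{b(0)}$ fixed) with nonzero slope; since the $\theta_{b(j)}$'s are algebraically independent over $K_\eta$ (where $K_\eta$ contains the coefficients of $P,Q$ and $\eta<a,b$), the tuple $(r_1,\dots,r_{l-1})$ is algebraically independent over $K_\eta$. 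Once this is in hand, the paper's Claim proof is a direct coefficient comparison: writing $P=\sum_{t\le n}P_t(z_1,\dots,z_{l-1})z_0^t$ and $Q=\sum_{t\le n'}Q_t z_0^t$, a constant value $F_s\equiv c$ would, via algebraic independence of the $r_j$'s, force the polynomial identities $P_t=\frac{P_n}{Q_n}Q_t$ for all $t$, i.e.\ $z_0$ would not be free — contradicting the WLOG normalization that $z_0$ is free in $P/Q$. (Your reduction ``ultimately to $l=1$'' is a workable alternative for the degenerate case, but the non-degenerate case still needs the Claim.) In short: you identified precisely where the work is, and your suspicion that algebraic independence must be exploited is right, but the mechanism is the elementary algebraic-independence argument of the $r_j$'s over $K_\eta$, not $(NR)$; $(NR)$ is reserved for the closed-discrete analysis of $field(\mathcal{L})^2$ (Proposition~\ref{prop26}), not this lemma.
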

\begin{proof}
  Going to uncountable subfamilies we may assume, by \cite[Theorem 7]{pw} (or Theorem \ref{pw} in Section 5), that there is a sequence $\langle c_{sj}: s<k,j<l\rangle \in \mathbb{Z}^{k\times l}$ such that for any $a\in \mathscr{A}$, for any
    $b\in \mathscr{B}$, if $a<b$, then $osc(a(s),b(j))=osc(a(s),b(0))+c_{sj}$ whenever $s<k,j<l$. Note that for any $s<k$, $$c_{s0}=0.$$
    
   Fix $\eta<\omega_1$ such that the coefficients of $\frac{P(z_0,...z_{l-1})}{ Q(z_0,...z_{l-1})}$ are in $K_\eta$. Assume that  $\eta<a$ whenever $a\in \ms{A}\cup \ms{B}$.

Without loss of generality, assume $z_0$ is free in $\frac{P(z_0,...z_{l-1})}{ Q(z_0,...z_{l-1})}$, i.e., $\frac{P(z_0,...z_{l-1})}{ Q(z_0,...z_{l-1})}\neq f(z_1,..., z_{l-1})$ for any rational function $f$.

Fix  a countable elementary submodel $M\prec H(\aleph_2)$ containing everything relevant. Fix for now $a\in \ms{A}\setminus M$, $b\in \ms{B}\setminus M$ and $s<k$.
Denote for $0<j<l$,
$$r_j=g(frac(-\theta_{b(0)}+\theta_{a(s)}c_{sj}+\theta_{b(j)})).$$
Note by our choice of $g$, $\ms{A}, \ms{B}$ and $\theta_\alpha$'s, $r_1,...r_{l-1}$ are algebraically independent over $K_\eta$. 

Consider 
    $$F_s(z)=\frac{P(z, r_1,..., r_{l-1})}{Q(z, r_1,..., r_{l-1})}.$$

    \textbf{Claim.} $F_s(z)$ is not a constant. 
    \begin{proof}[Proof of Claim.]
    Assume 
    $$P(z_0,...,z_{l-1})=\sum_{t\leq n} P_t(z_1,...,z_{l-1})z_0^t\text{ and }Q(z_0,...,z_{l-1})=\sum_{t\leq n'} Q_t(z_1,...,z_{l-1})z_0^t.$$
    Suppose towards a contradiction that $F_s(z)=c$ for some constant $c$. Then $n=n'$ and for any $t\leq n$, $P_t(r_1,...,r_{l-1})=cQ_t(r_1,...,r_{l-1})$. 
    
    By the fact that $r_1,...r_{l-1}$ are algebraically independent over $K_\eta$ and $P_t, Q_t$ are polynomials with   coefficients in $K_\eta$, 
    $P_n(r_1,..., r_{l-1})\neq 0$, $Q_n(r_1,..., r_{l-1})\neq 0$ and $c=\frac{P_n(r_1,..., r_{l-1})}{Q_n(r_1,..., r_{l-1})}\neq 0$.

    For any $t\leq n$,   $P_t(r_1,..., r_{l-1})=cQ_t(r_1,..., r_{l-1})=\frac{P_n(r_1,..., r_{l-1})}{Q_n(r_1,..., r_{l-1})}Q_t(r_1,..., r_{l-1})$. 
    By algebraic independence again, $P_t(z_1,..., z_{l-1})=\frac{P_n(z_1,..., z_{l-1})}{Q_n(z_1,..., z_{l-1})}Q_t(z_1,..., z_{l-1})$.
    
    But this shows that $\frac{P(z_0,...,z_{l-1})}{Q(z_0,...,z_{l-1})}=\frac{P_n(z_1,..., z_{l-1})}{Q_n(z_1,..., z_{l-1})}$. This is a contradiction to our assumption that  $z_0$ is free in $\frac{P(z_0,...z_{l-1})}{ Q(z_0,...z_{l-1})}$.    
    \end{proof}
    
    Now by above claim and the Fundamental Theorem of Algebra, $F_s(z)\in U_s$ has a solution. By continuity, we can find open sets $\langle O'_{sj}\subset \mathbb{C}: j<l\rangle $ such that 
    \begin{enumerate}[$(i)$]
    \item for any $0<j<l$, $r_j\in O'_{sj}$;
    \item for any $\langle z_j\in O'_{sj}: j<l\rangle$, $\frac{P(z_0,...,z_{j-1})}{Q(z_0,...,z_{j-1})}\in U_s$.
    \end{enumerate}
    Now for any $0<j<l$, define $G_{sj}$ to be the function with domain $dom(G_{sj})=\mathbb{R}\setminus (-\theta_{a(s)}c_{sj}-\theta_{b(j)}+\mathbb{Z})$ by
    $$G_{sj}(z)=g(frac(z+\theta_{a(s)}c_{sj}+\theta_{b(j)})).$$
    Note as $z\ra -\theta_{b(0)}$, $G_{sj}(z)\ra r_j$ for $0<j<l$.
    
    We first find by continuity,   $O_s $, $V_s'$ and $\langle V_{sj}'': 0<j<l\rangle$ that are open subsets of $\mathbb{R}$ such that 
    \begin{enumerate}[$(i)$]\setcounter{enumi}{2}
    \item $-\theta_{b(0)}\in O_s$, $\theta_{a(s)}\in V_s'$ and for any $0<j<l$, $\theta_{b(j)}\in V_{sj}''$;
    \item for any $z\in O_s$,   $x\in V_s'$, $0<j<l$ and $ y_j\in V_{sj}''$, $g(frac(z+xc_{sj}+y_j))\in O_{sj}'$. 
    \end{enumerate}
    
    Note as $z\ra -\theta_{b(0)}^+$,  $frac(z+\theta_{a(s)}c_{s0}+\theta_{b(0)})=frac(z+ \theta_{b(0)})\ra 0^+$.
    
    We then find by     property (4) in the definition of $g$,    open sets $V^*_s\subset O_s$ and $V_{s0}''$  such that 
    \begin{enumerate}[$(i)$]\setcounter{enumi}{4}
    \item $\theta_{b(0)}\in V_{s0}''$;
    \item for any $z\in V^*_s$,  for any $y_0\in V_{s0}''$,  $g(frac(z+y_0))\in O'_{s0}$. 
     \end{enumerate}
       Let $V_s=frac(V_s^*)=\{frac(z): z\in V^*_s\}$. Note 
       $$frac(z+xc_{sj}+y_j)=frac(frac(z)+xc_{sj}+y_j).$$
   Together with $(iii)-(vi)$, we get the following.
    \begin{enumerate}[$(i)$]\setcounter{enumi}{6}
    \item $\theta_{a(s)}\in V_s'$ and for any $j<l$, $\theta_{b(j)}\in V_{sj}''$;
    \item for any $z\in V_s$,   $x\in V_s'$, $j<l$ and $ y_j\in V_{sj}'' $, $g(frac(z+xc_{sj}+y_j))\in O_{sj}'$.
     \end{enumerate}
     Together with $(ii)$, we get the following.
     \begin{enumerate}[$(i)$]\setcounter{enumi}{8}
     \item for any $z\in V_s$,   $x\in V_s'$  and $\langle y_j\in V_{sj}'': j<l\rangle $,
     $$\frac{P(g(frac(z+xc_{s0}+y_0)),...,g(frac(z+xc_{s,l-1}+y_{l-1})))}{Q(g(frac(z+xc_{s0}+y_0)),...,g(frac(z+xc_{s,l-1}+y_{l-1})))}\in U_s.$$
     \end{enumerate}
     
     Repeat above argument for each $s<k$. Let $V_{j}''=\bigcap_{s<k} V_{sj}''$ for each $j<l$. Then we get the following.
     \begin{enumerate}[$(i)$]\setcounter{enumi}{9}
     \item for any $s<k$ and $j<l$, $\theta_{a(s)}\in V_s'$ and   $\theta_{b(j)}\in V_{j}''$;
          \item for any $s<k$, $z\in V_s$,   $x\in V_s'$  and $\langle y_j\in V_{j}'': j<l\rangle $,
     $$\frac{P(g(frac(z+xc_{s0}+y_0)),...,g(frac(z+xc_{s,l-1}+y_{l-1})))}{Q(g(frac(z+xc_{s0}+y_0)),...,g(frac(z+xc_{s,l-1}+y_{l-1})))}\in U_s.$$
     \end{enumerate}


So $\underset{s<k}{\prod} V_s'$ and $\underset{j<l}{\prod} V_j''$  are open neighbourhoods of $(\theta_{a(0)},...,\theta_{a(k-1)})$ and $(\theta_{b(0)},...,\theta_{b(l-1)})$   respectively.
    By elementarity, 
    $$\ms{A}'=\{a'\in \ms{A}: (\theta_{a'(0)},...,\theta_{a'(k-1)})\in  \underset{s<k}{\prod} V_s'\},$$
    $$\ms{B}'=\{b'\in \ms{B}: (\theta_{b'(0)},...,\theta_{b'(k-1)})\in  \underset{j<l}{\prod} V_j'' \}$$
    are uncountable.\smallskip

    Finally, by Kronecker's Theorem, we can find an uncountable subfamily $\mathscr{A}''\subset \mathscr{A}'$ and a natural number $m$ such that for any $a\in \mathscr{A}''$ and
    $\sigma\in \mathbb{R}^k$, there is a $m'<m$ such that for any $s<k$, $frac(\theta_{a(s)}m'+\sigma(s))\in V_s$. Then by Lemma \ref{moore}, we can find $a\in \mathscr{A}''$ and $\{b_t\in \mathscr{B}': t<m\}$ such
    that $osc(a(s),b_t(0))=osc(a(s),b_0(0))+t$ for any $s<k,t<m$. 
    
    Now we can find some $t<m$ such that for any $s<k$,
    $$frac(\theta_{a(s)}osc(a(s),b_t(0)))\in V_s.$$
    We now check that $a, b_t$ are as desired. Fix $s<k$. Let $z=frac(\theta_{a(s)}osc(a(s),b_t(0)))$. 
    Recall 
    \begin{eqnarray*}
  & &  frac(\theta_{a(s)}osc(a(s),b_t(j))+\theta_{b_t(j)})\\
   &= &frac(\theta_{a(s)}osc(a(s),b_t(0))+\theta_{a(s)}c_{sj}+\theta_{b_t(j)})\\
    & = & frac(frac(\theta_{a(s)}osc(a(s),b_t(0)))+\theta_{a(s)}c_{sj}+\theta_{b_t(j)})\\
    & =& frac(z+\theta_{a(s)}c_{sj}+\theta_{b_t(j)})
     \end{eqnarray*}
    By $(xi)$ and our choice of $\ms{A}', \ms{B}'$,  
    \begin{eqnarray*}
 && \frac{P(v_{b_t(0)}(a(s)),..., v_{b_t(l-1)}(a(s)))}{Q(v_{b_t(0)}(a(s)),..., v_{b_t(l-1)}(a(s)))}\\
  &=&\frac{P(g(frac(z+\theta_{a(s)}c_{s0}+\theta_{b_t(0)})),...,g(frac(z+\theta_{a(s)}c_{s,l-1}+\theta_{b_t(l-1)})))}{Q(g(frac(z+\theta_{a(s)}c_{s0}+\theta_{b_t(0)})),...,g(frac(z+\theta_{a(s)}c_{s,l-1}+\theta_{b_t(l-1)})))}\in U_s.
 \end{eqnarray*}
 This finishes the proof of the proposition.
\end{proof}

 As before, the topological field $field(\mc{L})$ is an L   field.
 \begin{thm}
 $field(\mc{L})$ is an L field.
 \end{thm}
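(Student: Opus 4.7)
The plan is to mirror the proof of Theorem \ref{thm1} for $vec(\ms{L})$, replacing Proposition \ref{prop1} with Proposition \ref{prop23} to accommodate the richer algebraic content of rational functions in the $v_\beta$'s. For hereditary Lindel\"ofness, given $\{(x_\alpha, O_\alpha):\alpha<\omega_1\}$, I first write $x_\alpha = R_\alpha(v_{b_\alpha(0)},\ldots,v_{b_\alpha(l_\alpha-1)})$ with $R_\alpha \in \mathbb{Q}(z_0,\ldots,z_{l_\alpha-1})$ and $O_\alpha = \{y : y(a_\alpha(i)) \in U_{\alpha,i},\, i < k_\alpha\}$. Pigeonholing on the countable lists of rational functions over $\mathbb{Q}$ and of rational-interval tuples lets one assume uniform $k, l, R, \langle U_i \rangle$; a $\Delta$-system refinement yields roots $a^r, b^r$ of sizes $k_0, l_0$; and further refinement arranges $a^r < b^r$ with both below all tails, and splits the surviving $\Gamma$ into a disjoint union $\Gamma_1 \cup \Gamma_2$ with $\Gamma_1 < \Gamma_2$ so that every element of $(a_\alpha \setminus a^r) \cup (b_\alpha \setminus b^r)$ for $\alpha \in \Gamma_1$ precedes every element of $(a_\gamma \setminus a^r) \cup (b_\gamma \setminus b^r)$ for $\gamma \in \Gamma_2$.

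Next, introduce the effective rational function $\tilde R(z_{l_0},\ldots,z_{l-1}) = R(D_0,\ldots,D_{l_0-1},z_{l_0},\ldots,z_{l-1})$, where $D_j = \theta_{b^r(j)} + \theta_{b^r(j)}' i$ is the constant value that $v_{b^r(j)}$ takes at any coordinate above $b^r(j)$; its coefficients lie in $K_{\omega_1}$. If $\tilde R$ is non-constant, apply Proposition \ref{prop23} to the tail families $\{a_\alpha \setminus a^r : \alpha \in \Gamma_1\}$, $\{b_\gamma \setminus b^r : \gamma \in \Gamma_2\}$, the function $\tilde R$, and the open sets $\langle U_i : k_0 \leq i < k \rangle$; this yields $\alpha \in \Gamma_1$ and $\gamma \in \Gamma_2$, hence $\alpha < \gamma$, such that $x_\gamma(a_\alpha(i)) \in U_i$ for all $i \geq k_0$. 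For $i < k_0$, the equality $a_\alpha(i) = a_\gamma(i) \in a^r$ gives $x_\gamma(a_\alpha(i)) = x_\gamma(a_\gamma(i)) \in U_i$ immediately from $x_\gamma \in O_\gamma$. In the degenerate case that $\tilde R$ equals a constant $c$, one has $x_\beta(a_\beta(i)) = c$ for every $\beta \in \Gamma$ and $i \geq k_0$, so $c \in U_i$ and any pair $\alpha < \gamma$ in $\Gamma$ works.

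For non-separability, the key structural fact is that each $x \in field(\mc{L})$ has the form $R(v_{\beta_0},\ldots,v_{\beta_{l-1}})$ for finitely many $\beta_j$'s, and since $v_\beta(\gamma) = \theta_\beta + \theta_\beta' i$ whenever $\gamma > \beta$, the value $x(\gamma)$ equals the single constant $c_x = R(\theta_{\beta_0} + \theta_{\beta_0}' i, \ldots, \theta_{\beta_{l-1}} + \theta_{\beta_{l-1}}' i)$ for every $\gamma$ above $\mathrm{supp}(x)$. Given a countable $X \subset field(\mc{L})$, set $\eta = \sup_{x \in X}(\max \mathrm{supp}(x) + 1) < \omega_1$, fix $\beta \geq \eta$, and pick any $\gamma > \beta$. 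Then $v_\beta(\gamma) - v_\beta(\beta) = \theta_\beta' i$ has modulus $|\theta_\beta'| > 0$ by algebraic independence, whereas $x(\gamma) - x(\beta) = 0$ for every $x \in X$. The basic open neighbourhood $N = \{y : |y(\beta) - \theta_\beta| < |\theta_\beta'|/4 \text{ and } |y(\gamma) - \theta_\beta - \theta_\beta' i| < |\theta_\beta'|/4\}$ contains $v_\beta$, but any $y \in N$ satisfies $|y(\gamma) - y(\beta)| > |\theta_\beta'|/2 > 0$; hence $N \cap X = \emptyset$, so $v_\beta \notin \overline X$ and $X$ is not dense.

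The main technical obstacle will be the bookkeeping in the hereditary Lindel\"of step: unlike the vector-space case, where the root portion of $b_\alpha$ contributes zero to $x_\alpha$ at tail coordinates, here it contributes the nonzero constants $D_j$, so one must explicitly fold them into an effective rational function $\tilde R$ and separately treat the possibility that $\tilde R$ collapses to a constant.
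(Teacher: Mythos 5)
Your argument is correct and follows essentially the same route as the paper: reduce to a uniform rational function $R$ and $\Delta$-system roots $a^r, b^r$, substitute the constants $\theta_{b^r(j)}+\theta'_{b^r(j)}i$ to form an effective rational function over $K_{\omega_1}$, and apply Proposition \ref{prop23} on the tail coordinates, with the root coordinates handled by $x_\gamma\in O_\gamma$; and for non-separability, exploit that each element of $field(\mathcal{L})$ is eventually constant while $v_\beta(\beta)\neq v_\beta(\beta+1)$. Your explicit treatment of the degenerate case where $\tilde R$ collapses to a constant (which Proposition \ref{prop23} cannot process) and your direct construction of a separating neighbourhood in the non-separability step are small but welcome clarifications of points the paper passes over quickly.
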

 \begin{proof}
 We first show that $field(\mc{L})$ is hereditarily Lindel\"of.  It suffices to prove that for any $\{(x_\alpha, O_\alpha): \alpha<\omega_1\}$ such that each $O_\alpha$ is an open neighborhood of $x_\alpha$, there are $\alpha<\beta$ such that $x_\beta\in O_\alpha$.
 
 Going to uncountable subset and shrinking neighborhoods, we may assume that there are $k, l<\omega$, rational function $\frac{P(z_0,...,z_{l-1})}{Q(z_0,...,z_{l-1})}$ with rational coefficients and a sequence of open sets $\langle U_s: s<k\rangle$  such that for any $\alpha<\omega_1$, 
 $$x_\alpha=\frac{P(v_{b_\alpha(0)},...,v_{b_\alpha(l-1)})}{Q(v_{b_\alpha(0)},...,v_{b_\alpha(l-1)})} \text{ and}$$
 $$O_\alpha=\{x: x(a_\alpha(s))\in U_s \text{ for any } s<k\}$$
 for some $a_\alpha\in [\omega_1]^k$ and $b_\alpha\in [\omega_1]^l$.  Going to uncountable $\Gamma\subset \omega_1$, we assume that 
 $$\ms{A}=\{a_\alpha: \alpha\in \Gamma\}, \ms{B}=\{b_\alpha: \alpha\in \Gamma\}$$ 
  are $\Delta$-systems with roots $a^r, b^r$. 
  
  Define
  $$\frac{P^*(z_{|b^r|},...,z_{l-1})}{Q^*(z_{|b^r|},...,z_{l-1})}=\frac{P(\theta_{b^r(0)}+\theta'_{b^r(0)}i, ..., \theta_{b^r(|b^r|-1)}+\theta'_{b^r(|b^r|-1)}i, z_{|b^r|},...,z_{l-1})}{Q(\theta_{b^r(0)}+\theta'_{b^r(0)}i, ..., \theta_{b^r(|b^r|-1)}+\theta'_{b^r(|b^r|-1)}i, z_{|b^r|},...,z_{l-1})}.$$
  
  Applying Proposition \ref{prop23} to $\{a\setminus a^r: a\in \ms{A}\}, \{b\setminus b^r: b\in \ms{B}\}, \langle U_s: |a^r|\leq s<k\rangle $ and rational function $\frac{P^*(z_{|b^r|},...,z_{l-1})}{Q^*(z_{|b^r|},...,z_{l-1})}$ with coffecients in $K_{\omega_1}$, we get $\alpha<\beta$ in $\Gamma$ such that $ b^r< a_\alpha\setminus a^r$ and for any $|a^r|\leq s<k$, 
$$\frac{P^*(v_{b_\beta(|b^r|)}(a_\alpha(s)),...,v_{b_\beta(l-1)}(a_\alpha(s)))}{Q^*(v_{b_\beta(|b^r|)}(a_\alpha(s)),...,v_{b_\beta(l-1)}(a_\alpha(s)))} \in U_s.$$
Now it is straightforward to check that $x_\beta\in O_\alpha$.\medskip

We then show that $field(\mc{L})$ is not separable. Fix $X\in [field(\mc{L})]^\omega$. Note for some $\alpha<\omega_1$, $x(\beta)=x(\beta+1)$  whenever  $x\in X$ and $\beta>\alpha$. By Proposition \ref{prop23}, we can find $y\in \mc{L}$ and $\beta>\alpha$ such that $y(\beta)\neq y(\beta+1)$. Then this $y$ is not in the closure of $X$.
 \end{proof}

 We now analyze the topological properties of $field(\mc{L})^2$. As in Proposition \ref{prop2}, we   get the following combinatorial property.
 \begin{prop}\label{prop25}
 For any $\Gamma\in [\omega_1]^{\omega_1}$, there are $A\in [\Gamma]^{\omega_1}$ and uncountable pairwise disjoint $\ms{B}\subset [\Gamma]^2$ such that for any $\alpha\in A$, for any $b\in \ms{B}\cap [\omega_1\setminus \alpha]^2$,
 $$v_{b(j)}(\alpha)\in (\frac{1}{3}, 1) \text{ for some } j<2$$
 where $(\frac{1}{3}, 1)=\{x\in \mathbb{R}: \frac{1}{3}<x<1\}$.
 \end{prop}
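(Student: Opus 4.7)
The plan is to port the proof of Proposition~\ref{prop2} almost verbatim, exploiting property~(1) of $g$ (namely $g(x) = x$ on $[\frac{1}{3}, 1]$) to read off the desired conclusion directly from a fractional-part estimate rather than passing through a sinusoidal lower bound as was needed for $w_{b(j)}(\alpha) > \sin 3$.

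First, pick $\eta \in \Gamma$ so that $\theta_\eta$ is a complete accumulation point of $\{\theta_\xi : \xi \in \Gamma\}$; this exists because $\{\theta_\xi : \xi \in \Gamma\}$ is uncountable. Since $\theta_\eta$ is irrational (by algebraic independence, Lemma~\ref{nr}), the set $\{frac(p\theta_\eta) : p < \omega\}$ is dense in $[0,1)$, so fix $p < \omega$ with $frac(p\theta_\eta) \in (\frac{1}{3}, \frac{1}{2})$. By continuity, choose an open neighbourhood $O$ of $\theta_\eta$ narrow enough that $frac(px + y - z) \in (\frac{1}{3}, \frac{1}{2})$ for all $x, y, z \in O$, and set $\Gamma' = \{\alpha \in \Gamma : \theta_\alpha \in O\}$, which is uncountable by choice of $\eta$. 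Invoking \cite[Theorem~9]{pw} with $\Gamma'$ and $p$ yields an uncountable $A \subset \Gamma'$ and an uncountable pairwise disjoint family $\ms{B} \subset [\Gamma']^2$ such that $osc(\alpha, b(1)) = osc(\alpha, b(0)) + p$ for every $\alpha \in A$ and every $b \in \ms{B} \cap [\omega_1 \setminus \alpha]^2$.

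Finally, fix such $\alpha$ and $b$ and set $c = frac(\theta_\alpha osc(\alpha, b(0)) + \theta_{b(0)}) \in [0, 1)$. If $c > \frac{1}{3}$, property~(1) of $g$ gives $v_{b(0)}(\alpha) = g(c) = c \in (\frac{1}{3}, 1)$. Otherwise $c \leq \frac{1}{3}$, and since $\theta_\alpha, \theta_{b(0)}, \theta_{b(1)} \in O$ the defining property of $O$ forces $frac(p\theta_\alpha + \theta_{b(1)} - \theta_{b(0)}) \in (\frac{1}{3}, \frac{1}{2})$, so
\begin{equation*}
frac(\theta_\alpha osc(\alpha, b(1)) + \theta_{b(1)}) = frac(c + p\theta_\alpha + \theta_{b(1)} - \theta_{b(0)}) \in \left(\tfrac{1}{3}, \tfrac{5}{6}\right),
\end{equation*}
whence $v_{b(1)}(\alpha) \in (\frac{1}{3}, 1)$ again by property~(1) of $g$. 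There is no real obstacle beyond this direct translation; the step from the fractional-part estimate to a pointwise bound, which demanded a careful appeal to the value $\sin 3$ in Proposition~\ref{prop2}, here becomes the transparent identity $g(x) = x$ on $[\frac{1}{3}, 1]$, and the minor check that $c + frac(p\theta_\alpha + \theta_{b(1)} - \theta_{b(0)})$ already lies in $[0,1)$ (so the outer $frac$ may be dropped) is immediate from the bounds on $c$ and on $frac(p\theta_\alpha + \theta_{b(1)} - \theta_{b(0)})$.
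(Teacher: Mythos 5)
Your proof is correct and takes essentially the same route as the paper's: same complete accumulation point $\eta$, same choice of $p$ and neighbourhood $O$, same appeal to \cite[Theorem 9]{pw}, and the same two-case analysis on the fractional part. The paper leaves that case split implicit by citing the proof of Proposition \ref{prop2}; you write it out explicitly and note that property (1) of $g$ turns the final step into the identity $g(x)=x$, which is exactly what the paper relies on.
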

 \begin{proof}
Fix $\eta\in \Gamma$ so that $\theta_\eta$ is a complete accumulation point of $\{\theta_\xi:\xi\in \Gamma\}$. Fix $p<\omega$ such that
  $\frac{1}{3}<frac(p\theta_\eta )<\frac{1}{2}$. Choose an open neighbourhood $O$ of $\theta_\eta$ such that for any $x,y,z\in O$, $\frac{1}{3}<frac(px+y-z)<\frac{1}{2}$. Let $\Gamma'=\{\alpha\in \Gamma: \theta_\alpha\in O\}$.\medskip

By Theorem \ref{pw1}, fix an uncountable set $A\subset \Gamma'$ and an uncountable pairwise disjoint family $\mathscr{B}\subset [\Gamma']^{2}$ such that for any $\alpha\in A$, for any $b\in \mathscr{B}\cap [\omega_1\setminus \alpha]^2$,
$$osc(\alpha,b(1))=osc(\alpha,b(0))+p.$$
 Then by our choice of $\Gamma'$, for any $\alpha\in A$, for any $b\in \mathscr{B}\cap [\omega_1\setminus \alpha]^2$, there is some $j<2$ such that 
 $$frac(\theta_\alpha osc(\alpha,b(j))+\theta_{b(j)})\in (\frac{1}{3},1)$$
 and hence $v_{b(j)}(\alpha)\in (\frac{1}{3}, 1)$.
 \end{proof}

\begin{prop}\label{prop26}
  $e(field(\mc{L})^2)=\omega_1$. Moreover, there is an uncountable subset $X\subset \mc{L}^2$ such that $X$ is closed discrete in $field(\mc{L})^2$.
\end{prop}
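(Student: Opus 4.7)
The plan is to mirror the proof of Proposition \ref{prop19}, with Proposition \ref{prop25} playing the role of Proposition \ref{prop2} and Lemma \ref{nr} controlling the eventual values of elements of $field(\mc{L})$. First I would apply Proposition \ref{prop25} to obtain an uncountable $A\subset\omega_1$ and an uncountable pairwise disjoint family $\ms{B}\subset[\omega_1]^2$ with the property that for every $\alpha\in A$ and $b\in\ms{B}\cap[\omega_1\setminus\alpha]^2$, some $j<2$ satisfies $v_{b(j)}(\alpha)\in(1/3,1)$. Then, exactly as in Proposition \ref{prop19}, I would pass to uncountable subfamilies so that (a) any two elements of $\ms{B}$ are $<$-comparable, and (b) between any $a<b$ in $\ms{B}$ lies some element of $A$. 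Setting $X=\{v_b : b\in\ms{B}\}\subseteq\mc{L}^2$ with $v_b=(v_{b(0)},v_{b(1)})$, the task is, for each $(x,y)\in field(\mc{L})^2$, to produce an open neighborhood $O$ of $(x,y)$ with $O\cap X\subseteq\{(x,y)\}$.

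Writing $x$ and $y$ as rational functions of finitely many $v_\beta$'s, let $\tau$ be the maximum of the union of the finite index sets appearing in $x$ and $y$, with $\tau=0$ if both are constants. For $\xi>\tau$, $x(\xi)=x_\infty$ and $y(\xi)=y_\infty$, and by Lemma \ref{nr} whenever the underlying rational functions are non-constant we have $x_\infty,y_\infty\in\mathbb{C}\setminus\mathbb{R}$. Following the three-part construction of Proposition \ref{prop19}, I would build $O=O_1\cap O_2\cap O_3$: $O_1$ excises the at-most-one $v_b$ with $\tau\in[b(0),b(1)]$ distinct from $(x,y)$; $O_2$ uses conditions at coordinate $\tau$ to separate $(x,y)$ from $v_b$ with $b(1)<\tau$, exploiting that $v_b(\tau)=(\theta_{b(0)}+\theta_{b(0)}'i,\theta_{b(1)}+\theta_{b(1)}'i)$ is a specific complex pair with nonzero imaginary parts; and $O_3$ handles $v_b$ with $\tau<b(0)$ via $\gamma=\min(A\setminus(\tau+1))$ and $b_{min}$, the $<$-minimum of $\ms{B}\cap[[\tau,\omega_1)]^2$, taking for instance $O_3=\{(s,t) : s(\gamma),t(\gamma)\in\mathbb{C}\setminus[1/3,1]\}\setminus\{v_{b_{min}}\}$. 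By condition (b), for $b\neq b_{min}$ with $\tau<b(0)$ one has $\gamma<b(0)$, and Proposition \ref{prop25} forces some $v_{b(j)}(\gamma)\in(1/3,1)$, so $v_b\notin O_3$.

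The main obstacle is the case where $x$ or $y$ is a constant rational, since Lemma \ref{nr} then cannot be applied to force $x_\infty$ or $y_\infty$ into $\mathbb{C}\setminus\mathbb{R}$, and in particular when the constant lies in $[1/3,1]$ the $O_3$ prescription above fails. I would address this by strengthening the construction in Lemma \ref{nr} to additionally require $\theta_\alpha'\in[1,2]$, which is harmless because $X_{\theta_\alpha}\cap[1,2]$ remains comeager in $[1,2]$. With this strengthening, $|Im(v_{b(j)}(\alpha))|\geq 1$ whenever $\alpha>b(j)$, so for $v_b$ with $b(1)<\alpha$ the open condition ``$|Im(s(\alpha))|,|Im(t(\alpha))|<1/2$'' at a suitably chosen $\alpha$ excludes $v_b$ while containing any constant real $(x,y)$; combined with the $O_3$ argument this handles the remaining $v_b$ with higher support. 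Verifying $(x,y)\in O_1\cap O_2\cap O_3$ and the disjointness from $X\setminus\{(x,y)\}$ then follows by a routine case analysis paralleling the end of Proposition \ref{prop19}.
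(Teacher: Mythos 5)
Your proposal follows the paper's strategy closely: Proposition \ref{prop25} supplies $A$ and $\ms{B}$, the thinning to get conditions (2) and (3), the set $X=\{v_b:b\in\ms{B}\}$, and a neighborhood $O_1\cap O_2\cap O_3$ built exactly as in Proposition \ref{prop19}, with the $a_x,F_x$ representation and algebraic independence (via Lemma \ref{nr}) replacing the ``eventually zero'' feature of $vec(\ms{L})$. To that extent the plan is correct and is the same as the paper's. You are also right to be uneasy about constants: $\mathbb{Q}\subset field(\mc{L})$ as the prime subfield, and for a nonzero constant $q$ the only admissible representation under condition (4) has $a_x=\emptyset$, so Lemma \ref{nr} says nothing about $x(\gamma)$. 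Since $x(\gamma)=q$, and $q$ can lie in $[1/3,1]$ (e.g.\ $q=1/2$), the paper's check that $(x,y)\in O_3$ via ``$x\neq\bf 0$ follows from Lemma \ref{nr}'' does not cover this case; the analogous step in Proposition \ref{prop19} is fine only because every element of $vec(\ms{L})$ is eventually $0$.

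However, your proposed repair does not actually close this case. Forcing $\theta'_\alpha\in[1,2]$ gives $|Im(v_{b(j)}(\alpha))|\ge 1$ only for $\alpha>b(j)$, so the condition $|Im(s(\alpha))|<1/2$, $|Im(t(\alpha))|<1/2$ separates $(q,y)$ only from those $v_b$ with $b(1)<\alpha$ — exactly the regime that $O_2$ already handles. The troublesome points are the $v_b$ with $\tau<b(0)$, which $O_3$ was supposed to exclude by using $v_{b(j)}(\gamma)\in(1/3,1)$ for some $j<2$; but there $\gamma<b(0)\le b(j)$, so $v_{b(j)}(\gamma)=g(frac(\cdots))$, and when the fractional part lies in $(1/3,1)$ this value is \emph{real} and lies in $(1/3,1)$. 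No imaginary-part condition at a fixed coordinate can separate the real constant $q\in(1/3,1)$ from these real values, and one cannot choose a coordinate above all of $\ms{B}$ since $\ms{B}$ is uncountable. So the sentence ``combined with the $O_3$ argument this handles the remaining $v_b$ with higher support'' is circular: the $O_3$ argument is precisely what fails, since $(x,y)\notin O_3$ when $q\in[1/3,1]$. Patching this would need a genuinely new idea (for example, sharpening Proposition \ref{prop25} so that the witnessing coordinate $j$ can be pinned down, or arranging $\gamma$-values into a region that a nonconstant $y$ but not the oscillation values can avoid), not just a further restriction on $\theta'_\alpha$.
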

\begin{proof}
  By Proposition \ref{prop25}, we can find an uncountable set $A\subset \omega_1$ and an uncountable pairwise disjoint family $\mathscr{B}\subset [\omega_1]^{2}$ such that
  \begin{enumerate}
  \item for any $\alpha\in A$, for any $b\in \mathscr{B}\cap [\omega_1\setminus \alpha]^2$, there is some $j<2$, $v_{b(j)}(\alpha) \in (\frac{1}{3},1)$. 
  \end{enumerate}
  Going to uncountable subsets, we assume moreover that  
    \begin{enumerate}\setcounter{enumi}{1}
  \item  for any $a\neq b$ in $\ms{B}$, either $a<b$ or $b<a$;
  \item for any $a<b$ in $\ms{B}$, there exists $\alpha \in A$ such that $a< \alpha <b$.
  \end{enumerate}
  
  Let $X=\{v_b: b\in\ms{B}\}$ where $v_b=(v_{b(0)}, v_{b(1)})$.   In what follows, we show that $X$
  is closed discrete in $field(\mc{L})^2$ and thus complete the proof. It suffices to prove that every point $(x,y)$ in $field(\mc{L})^2$ has an open neighbourhood disjoint from
  $X\setminus\{(x,y)\}$. 
  
  First for each $x\in field(\mc{L})^2\setminus \{\bf{0}\}$, fix $a_x\in [\omega_1]^{<\omega}$ and rational functions $F_x(z_0,...,z_{|a_x|-1})$ such that
   \begin{enumerate}\setcounter{enumi}{3}
  \item for any $s< |a_x|$, $z_s$ is free in $F_x(z_0,...,z_{|a_x|-1})$;
  \item $x=F_x(v_{a_x(0)},..., v_{a_x(|a_x|-1)})$.
  \end{enumerate}
  
  Now fix $(x,y)\in field(\mc{L})^2$.  Let 
  $$\tau=\max(a_x\cup a_y)$$ 
  and denote $a_{\bf{0}}=\emptyset$ and $\max \emptyset=0$.
  
  By (2), there is at most one $b\in \mathscr{B}$ such that  $\tau\in [b(0),b(1)]$ and $(x,y)\neq v_b$. Let
  $$O_1=field(\mc{L})^2\setminus \{v_b: b\in \ms{B}, \tau\in [b(0), b(1)] \text{ and } (x,y)\neq v_b\}.$$
  
    Let  $$O_2=\{(s,t): s(\tau)\neq s(\tau+1)\text{ or } t(\tau)\neq t(\tau+1)\}\text{ if } \tau\neq 0\text{ and}$$
  $$O_2=field(\mc{L})^2\text{ if }  \tau=0.$$
  
    Let $\gamma=\min (A\setminus (\tau+1))$ and $b_{min}$ be the element in $\mathscr{B}\cap [[\tau,\omega_1)]^2$ minimizing the first coordinate.
  $$O_3=\{(s,t): s(\gamma)\notin [\frac{1}{3},1]\text{ and } t(\gamma)\notin [\frac{1}{3},1]\}\setminus \{ v_{b_{min}}\}.$$
  
    Now we check that $O_1\cap O_2\cap O_3 $ is an open neighborhood of $(x,y)$ disjoint from $X\setminus \{(x,y)\}$. 
    
    We first check that $O_2, O_3$ are neighborhoods of $(x,y)$. For $O_2$, assume without loss of generality that $a_x\neq \emptyset$ and $\tau=\max a_x$. Then 
    $$x(\tau)=F_x(\theta_{a_x(0)}+\theta_{a_x(0)}'i,...,\theta_{a_x(|a_x|-2)}+\theta_{a_x(|a_x|-2)}'i, \theta_{a_x(|a_x|-1)}) \text{ and}$$
    $$x(\tau+1)=F_x(\theta_{a_x(0)}+\theta_{a_x(0)}'i,...,\theta_{a_x(|a_x|-2)}+\theta_{a_x(|a_x|-2)}'i, \theta_{a_x(|a_x|-1)}+\theta_{a_x(|a_x|-1)}'i) .$$
    So $x(\tau)\neq x(\tau+1)$ follows from algebraic independence and (4). For $O_3$, by symmetry, we only check $x$.  $x=\bf{0}$ is trivial and $x\neq \bf{0}$ follows from Lemma \ref{nr}.
    
    We then fix $v_b\in X\setminus \{(x,y)\}$.\medskip

  \textbf{Case 1}:  $\tau\in [b(0),b(1)]$.

  Then $v_b\notin O_1$.\medskip

  \textbf{Case 2}: $b(1)<\tau$.

  Then $v_b\notin O_2$.\medskip

  \textbf{Case 3}: $\tau<b(0)$.

  If $b=b_{min}$, then $v_b\notin O_3$. 
  
  Now suppose $b\neq b_{min}$. Then by (2), $b_{min}<b$. Together with  (3), we get that $b(0)>\gamma$. Hence by (1), $v_b\notin O_3$.  
\end{proof}

The proof of Lemma \ref{lem20} actually  gives the following.
 \begin{lem}\label{lem27}
 Suppose $X\subset \mc{L}^2$ is uncountable and closed discrete in $filed(\mc{L})^2$. Then the following two statements hold:
 \begin{enumerate}
 \item Suppose $X=X_0\cup X_1$ is a partition such that 
\begin{itemize}
\item[$(\star)$] for any basic open set $U$, if $|U\cap X|=\omega_1$, then $  |U\cap X_0|=|U\cap X_1|=\omega_1$.
\end{itemize}
 If $U^0\supset X_0, U^1\supset X_1$ are open sets, then $U^0\cap U^1\cap \mc{L}^2\neq \emptyset$.
  
 \item If for each $x\in X$, $N_x$ is an open neighbourhood of $x$, then there is an infinite $\{x_p: p<\omega\}\in [X]^\omega$ such that $\mc{L}^2\cap  \bigcap_{p<\omega} N_{x_p}\neq \emptyset$.
 \end{enumerate}
 \end{lem}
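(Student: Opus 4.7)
The plan is to transplant the proof of Lemma \ref{lem20}, substituting $v_\beta$ for $w_\beta$ throughout and using properties (1)--(4) of $g$ in place of the behaviour of $f$. The only new ingredient required is a multi-family strengthening of Proposition \ref{prop23}: for uncountable pairwise disjoint $\ms{A}\subset [\omega_1]^k$ and $\ms{B}_p\subset [\omega_1]^{l_p}$ ($p<n$), open $U_{s,p}\subset \mathbb{C}$, and non-constant rational functions $F_p$ with coefficients in $K_{\omega_1}$, there exist $a\in \ms{A}$ and $b_p\in \ms{B}_p$ with $a<b_p$ and $F_p(v_{b_p(0)}(a(s)),\ldots,v_{b_p(l_p-1)}(a(s)))\in U_{s,p}$ for every $s, p$. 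This is obtained by first extracting from the proof of Proposition \ref{prop23} a ($1\ms{B}$)-style uniformity---uncountable $\ms{A}''\subset \ms{A}$, $\ms{B}'\subset \ms{B}$ and $m<\omega$ such that any $a\in \ms{A}''$ together with $\{b_t\in \ms{B}':t<m\}$ satisfying $osc(a(s),b_t(0))=osc(a(s),b_0(0))+t$ already forces some $t<m$ to witness the conclusion---and then iterating across the $\ms{B}_p$ exactly as in the derivation of Corollary \ref{cor13}.

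For part (1), fix a countable elementary submodel $M\prec H(\aleph_2)$ containing everything relevant, pick $x\in X_0\setminus M$, and locate a basic open set $U_0\in M$, some $b\in [\omega_1\setminus M]^k$, and rational rectangles $\langle I_{i,j}:i<k,j<2\rangle$ in $\mathbb{C}$ with $x\in U_0\cap \bigcap_{i<k}O_{b(i),I_{i,0},I_{i,1}}\subset U^0$. Elementarity supplies an uncountable pairwise disjoint $\ms{A}\subset [\omega_1]^k$ satisfying this inclusion for each of its elements. Property ($\star$) makes $U_0\cap X_1$ uncountable; hereditary Lindel\"ofness of $field(\mc{L})$ lets one pick $y\in U_0\cap X_1$ and a basic rectangle $U_y=U_y'\times U_y''\subset U_0\cap U^1$ with both $U_y'\cap \mc{L}$ and $U_y''\cap \mc{L}$ uncountable, giving uncountable $B_0,B_1\subset \omega_1$. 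Applying the multi-family strengthening to $\ms{A}$, the two singleton families indexed by $B_0$ and $B_1$, and the identity rational function $F(z)=z$, produces $a\in \ms{A}$, $\beta_0\in B_0$, $\beta_1\in B_1$ with $(v_{\beta_0},v_{\beta_1})\in U^0\cap U^1\cap \mc{L}^2$.

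For part (2), follow Lemma \ref{lem20}(2) with $g$ in the role of $f$. Shrinking each $N_x$ and passing to an uncountable subset of $X$ reduces to a common basic open $O$ and rational rectangles $\langle I_{i,j}:i<k,j<2\rangle$; hereditary Lindel\"ofness yields uncountable $B,B'$ with $(v_\alpha,v_\beta)\in O$ for $(\alpha,\beta)\in B\times B'$. Properties (1)--(4) of $g$---in particular continuity on each $[\frac{1}{n+1},\frac{1}{n}]$ and density of $g[(0,\frac{1}{n}]]$ in $\mathbb{C}$---permit the selection of small rational intervals $J_{i,j}\subset (0,1)$ and uncountable $C\subset B$, $C'\subset B'$ with $g(frac(y+\theta_\beta))\in I_{i,0}$ for $\beta\in C$, $y\in J_{i,0}$, and symmetrically for $J_{i,1}$ and $C'$. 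Kronecker's Theorem supplies a uniform $n<\omega$ and uncountable $\ms{A}'\subset \ms{A}$ such that for every $a\in \ms{A}'$ and every $c\in \mathbb{R}^k$ some $m,m'<n$ place $frac(m\theta_{a(i)}+c(i))\in J_{i,0}$ and $frac(m'\theta_{a(i)}+c(i))\in J_{i,1}$; Lemma \ref{moore} then delivers $\{a_p:p<\omega\}\subset \ms{A}'$ and $\{b_m=(\beta_m,\beta_m'):m<n\}\subset C\times C'$ with the required oscillation shifts, and finite pigeonhole over $(m,m')\in n\times n$ produces a single pair $(v_{\beta_m},v_{\beta_{m'}'})\in \mc{L}^2\cap\bigcap_{p\in I} N_{x_p}$ for an infinite $I\subset \omega$. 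The main obstacle is the initial extraction of the ($1\ms{B}$)-style uniformity: one must check that the sets $V_s, V_s', V_{sj}''$ constructed in the proof of Proposition \ref{prop23} depend on $a,b$ only through $\theta_{a(s)},\theta_{b(j)}$ and the integer shifts $c_{sj}$---precisely what the elementary-submodel step absorbs---so that the inductive iteration proceeds exactly as in the derivation of Corollary \ref{cor13}.
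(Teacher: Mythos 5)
Your proposal is correct and follows exactly the route the paper intends: the paper dispatches Lemma~\ref{lem27} with the single sentence that ``the proof of Lemma~\ref{lem20} actually gives the following,'' and you have supplied precisely the bookkeeping that sentence leaves to the reader --- in particular the observation that one needs a multi-family version of Proposition~\ref{prop23} (the field analogue of Corollary~\ref{cor13}), obtained by first extracting a $(1\ms{B})$-style uniformity from the proof of Proposition~\ref{prop23} and then iterating across the families. Your substitutions ($v_\beta$ for $w_\beta$, rational rectangles in $\mathbb{C}$ for rational intervals in $\mathbb{R}$, the piecewise-continuous $g$ with dense range in place of $f$, the identity rational function in place of the scalar $1$) are all the right ones, so this is essentially the same proof the authors had in mind.
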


Now the following is a consequence of Proposition \ref{prop26} and Lemma \ref{lem27}.
 \begin{cor}\label{cor28}
 $field(\mc{L})^2$ is neither normal nor weakly paracompact.
 \end{cor}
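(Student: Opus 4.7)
The plan is to mirror the argument of Corollary \ref{cor21}, replacing each ingredient from the vector-space section with its field-theoretic analogue from this section. First I would invoke Proposition \ref{prop26} to fix an uncountable $X \subset \mc{L}^2$ that is closed and discrete in $field(\mc{L})^2$; this $X$ will witness both failures.

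For failure of normality, I would produce a partition $X = X_0 \cup X_1$ satisfying property $(\star)$ from Lemma \ref{lem27}. Since there are only $\omega_1$ basic open sets in $(\mathbb{C}^{\omega_1})^2$, the set of basic opens $U$ with $|U \cap X| = \omega_1$ has cardinality at most $\omega_1$, and a routine bookkeeping/transfinite recursion splits $X$ so that each such $U$ meets both pieces in an uncountable set. Because $X$ is closed and discrete, $X_0$ and $X_1$ are automatically disjoint closed subsets of $field(\mc{L})^2$. If disjoint open sets $U^0 \supset X_0$ and $U^1 \supset X_1$ existed, then $U^0 \cap U^1 \cap \mc{L}^2 = \emptyset$, directly contradicting Lemma \ref{lem27}(1).

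For failure of weak paracompactness, using that $X$ is discrete I would choose for each $x \in X$ an open neighbourhood $O_x$ with $O_x \cap X = \{x\}$, and consider the open cover
\[
\{field(\mc{L})^2 \setminus X\} \cup \{O_x : x \in X\}.
\]
Any open refinement must contain, for every $x \in X$, some open $O_x' \subset O_x$ with $x \in O_x'$ (since no other member of the cover contains $x$). Applying Lemma \ref{lem27}(2) to the family $\{O_x' : x \in X\}$ yields an infinite subcollection $\{O_{x_p}' : p < \omega\}$ whose intersection meets $\mc{L}^2$. Any point in that intersection lies in infinitely many members of the refinement, contradicting point-finiteness.

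The argument is essentially a verbatim transcription of the proof of Corollary \ref{cor21}, with Proposition \ref{prop26} and Lemma \ref{lem27} replacing Proposition \ref{prop19} and Lemma \ref{lem20}; the only step that is not pure formality is verifying that a partition satisfying $(\star)$ exists, and this is standard given the $\omega_1$-sized base of the product topology. I expect no real obstacles beyond that.
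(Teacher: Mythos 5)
Your proposal is correct and is essentially the proof the paper intends: the authors state Corollary \ref{cor28} is a consequence of Proposition \ref{prop26} and Lemma \ref{lem27}, and your argument is a verbatim transcription of the proof of Corollary \ref{cor21} with those substitutions, including the standard transfinite-recursion construction of the partition satisfying $(\star)$.
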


  \section{More on combinatorial properties of $osc$}
  The results in \cite{pw} show that the $osc$ mapping is highly inhomogeneous. In this section, we show that this sort of inhomogeneous phenomenon could even be recognized homogeneously.

  First recall     \cite[Theorem 7]{pw}.
    \begin{thm}[\cite{pw}]\label{pw}
    For any uncountable families of pairwise disjoint sets $\mathscr{A}\subset [\omega_1]^k$ and
    $\mathscr{B}\subset [\omega_1]^l$, there are $\mathscr{A}'\in [\mathscr{A}]^{\omega_1}$, $\mathscr{B}'\in [\mathscr{B}]^{\omega_1}$ and
    $\langle c_{ij}: i<k, j<l\rangle \in \mathbb{Z}^{k\times l}$ such that for any $a\in \mathscr{A}'$, for any
    $b\in \mathscr{B}'$, if $a<b$, then $osc(a(i),b(j))=osc(a(i),b(0))+c_{ij}$ for any $i<k,j<l$.
  \end{thm}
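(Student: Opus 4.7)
The plan is to reduce $osc(a(i),b(j))$ to a sum of an ``initial part'' that depends only on $a(i)$ and an ``entry point'' $\eta_j(b)$, plus a ``tail part'' that depends only on finite walk data of $b$; then stabilize both pieces by passing to uncountable subfamilies. First I would refine $\ms{A}$ and $\ms{B}$ to $\Delta$-systems; shifting indices and absorbing the (finite, constant) root contributions, we may assume the roots are empty. I would fix a countable elementary submodel $M\prec H(\aleph_2)$ containing everything relevant, with $\delta=M\cap\omega_1$, and restrict attention to $b\in\ms{B}$ with $b(0)>\delta$.

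For such $b$ and each $j<l$, let $\eta_j(b)$ be the first step in the walk from $b(j)$ lying strictly below $b(0)$ (with $\eta_0(b)=b(0)$), and let $T_j^*(b)\subset[b(0),b(j))$ be the corresponding finite tail of trace points. By Fact \ref{f1}(2), for $\alpha<\eta_j(b)$ sufficiently close to $\eta_j(b)$,
\[
L(\alpha,b(j))=L(\alpha,\eta_j(b))\cup T_j^*(b).
\]
By coherence and finite-to-one-ness of $\varrho_1$ (Fact \ref{f0}), the set $F_j(b)=\{\xi<\eta_j(b):\varrho_1(\xi,b(j))\neq\varrho_1(\xi,\eta_j(b))\}$ is finite. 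For $\alpha$ outside $F(b)=\bigcup_j F_j(b)$, the initial piece of the oscillation count on $L(\alpha,\eta_j(b))$ equals $osc(\alpha,\eta_j(b))$, and the tail piece on $T_j^*(b)$ is determined by the finite walk data of $b$ together with $\varrho_{1\alpha}{\up}T_j^*(b)$.

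Next I would apply a pigeonhole / Ramsey step: the walk type of $b$ --- the finite isomorphism class of $(\eta_j(b),T_j^*(b),\varrho_{1b(j)}{\up}T_j^*(b))_{j<l}$ together with the pattern of $\varrho_1$-comparisons among its entries --- takes countably many values, so I pass to an uncountable $\ms{B}'\subset\ms{B}$ on which it is constant. Then, by elementarity and coherence on the $a$-side, pass to an uncountable $\ms{A}'\subset\ms{A}$ such that for each $a\in\ms{A}'$ and each $b\in\ms{B}'$ with $a<b$, every $a(i)$ avoids $F(b)$ and the relevant values $\varrho_{1a(i)}{\up}T_j^*(b)$ entering the oscillation count are fixed. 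This yields the desired integers $c_{ij}$, with $c_{i0}=0$ because $T_0^*(b)=\emptyset$ and so the tail piece for $j=0$ is empty.

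The main obstacle is the last step: one must argue that the data governing the tail piece takes only countably many values (so Ramsey / $\Delta$-system refinements apply), and that the finite values of $\varrho_{1a(i)}$ on $T_j^*(b)$ which enter the oscillation count can be made uniform across $a\in\ms{A}'$. This requires careful bookkeeping at the interface between $L(\alpha,\eta_j(b))$ and $T_j^*(b)$ in the definition of $Osc$: one uses that walks from points of $T_j^*(b)$ down to $a(i)$ have weights that are bounded in terms of data already in $M$, that $\varrho_{1b(j)}$ is large on the tail while $\varrho_{1a(i)}$ is bounded there, so that the sign pattern controlling the oscillation is forced, and finally elementarity to realize uncountably many $a\in\ms{A}$ whose type relative to $\ms{B}'$ agrees with a fixed type inside $M$.
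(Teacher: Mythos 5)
Your high-level plan---decompose the lower trace of $(a(i),b(j))$ into a piece depending only on $a(i)$ and a piece depending only on $b(j)$ using a countable elementary submodel, then stabilize each piece by thinning---is indeed the idea behind the proof in \cite{pw}. But the way you write the decomposition is inconsistent, and the key technical point at the interface is left unresolved.

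Concretely: you define $T_j^*(b)\subset[b(0),b(j))$ as a ``finite tail of trace points'' and then assert $L(\alpha,b(j))=L(\alpha,\eta_j(b))\cup T_j^*(b)$ for $\alpha$ close to $\eta_j(b)$. This cannot be right as written: $L(\alpha,b(j))$ is a subset of $[0,\alpha)$, and $\alpha<\delta<b(0)$, so nothing in $[b(0),b(j))$ can belong to it. You have conflated the upper trace (walk points, which live in $[\alpha,b(j)]$) with the lower trace. The correct decomposition, via Fact~\ref{f1}(2), is
$L(\alpha,b(j))=L(\alpha,\delta)\cup L(\delta,b(j))$, valid precisely when $L(\delta,b(j))<L(\alpha,\delta)$, which by Fact~\ref{f1}(1) holds for $\alpha$ sufficiently close to $\delta$. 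The ``tail'' piece is $L(\delta,b(j))\subset[0,\delta)$, not a subset of $[b(0),b(j))$.

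The more serious gap is the interface. When you split $Osc(\varrho_{1\alpha},\varrho_{1b(j)};L(\alpha,b(j)))$ along $L(\alpha,\delta)$ and $L(\delta,b(j))$, an extra oscillation may or may not occur at the junction $\min L(\alpha,\delta)$ versus $\max L(\delta,b(j))$, and this boundary contribution must be controlled uniformly in $i,j$. This is exactly what the hypothesis of Lemma~\ref{lempw} (Lemma 8 of \cite{pw}) encodes: one needs witnesses $a_0,b_0$ above $\delta$ with $\varrho_{1a_0(i)}(\max L(\delta,b_0(j)))\ R\ \varrho_{1b_0(j)}(\max L(\delta,b_0(j)))$ for all $i<k$, $j<l$, for a \emph{single} relation $R\in\{=,>\}$, and then the constants $c_{ij}$ come out explicitly as the difference of two oscillation counts on $L(\delta,b_0(j))$ and $L(\delta,b_0(0))$. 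Your last paragraph names this obstacle (``the sign pattern controlling the oscillation is forced'') without isolating the condition or showing such $a_0,b_0$ exist; the proof of Theorem~\ref{pw} in \cite{pw} reduces precisely to verifying this hypothesis and invoking Lemma~\ref{lempw}. Finally, a small point: $\mathscr{A}$ and $\mathscr{B}$ are hypothesized to consist of pairwise disjoint sets, so the $\Delta$-system/root-absorption step at the outset is unnecessary.
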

  On the other hand, Theorem 9 in \cite{pw} indicates that   every  appropriate matrix $\langle c_{ij}: i<k,j<l\rangle$ can be realized.
\begin{thm}[\cite{pw}]\label{pw1}
  For any $X\in [\omega_1]^{\omega_1}$, for any $\langle c_{ij}: i<k,j<l\rangle\in \mathbb{Z}^{k\times l}$ such that $c_{i0}=0$ for any $i<k$, there are uncountable
  families $\mathscr{A}\subset [X]^k$, $\mathscr{B}\subset [X]^l$ that are pairwise disjoint and such that for any $a\in \mathscr{A},b\in \mathscr{B}$, if $a<b$, then $osc(a(i),b(j))=osc(a(i),b(0))+c_{ij}$ for any $i<k,j<l$.
\end{thm}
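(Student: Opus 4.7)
The plan is to prove Theorem \ref{pw1} using an iterated application of Lemma \ref{moore} carried out inside a countable elementary submodel $M \prec H(\aleph_2)$ containing $X$ and the matrix $\langle c_{ij}\rangle$. The key idea is to build the families coordinate-by-coordinate, choosing each $b(j)$ in a generic fashion above $\delta = M \cap \omega_1$ so that the oscillation relations to a fixed $a$ realize the prescribed column of $c$.

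I would first dispose of the base case $k=1$, where we must match a single row $(0, c_{0,1}, \ldots, c_{0,l-1})$. The construction proceeds by fixing a base $b(0)$ and an uncountable pool of $\alpha$'s; then, for each $j\ge 1$, Lemma \ref{moore} applied with $n$ exceeding $\max_j |c_{0,j}|$ produces $n$-many shifted candidates $b_m(j)$, among which one selects the candidate whose oscillation shift matches $c_{0,j}$. Iterating over $j$, passing to uncountable sub-families at each stage by elementarity, and concluding with a $\Delta$-system argument on the resulting $\mathscr{B}$, gives the desired $A \subset X$ and $\mathscr{B}\subset [X]^l$.

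For the general case $k>1$ the situation is more delicate because the shift $c_{ij}$ depends on the row $i$: different coordinates $a(i)$ of the same $a\in \mathscr{A}$ must see different oscillation shifts to the same coordinate $b(j)\in \mathscr{B}$. The plan here is to apply Lemma \ref{moore} with $\mathscr{A}$ consisting of $k$-tuples rather than singletons, so that each candidate $b_m(j)$ has its oscillation with every $a(i)$ simultaneously controlled. A pigeonhole search among the $n$-many resulting candidates is then expected to pin down a single $b(j)$ realizing the entire column vector $(c_{0,j}, c_{1,j}, \ldots, c_{k-1,j})$, and iterating this over $j$ produces the full tuple $b$.

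The main obstacle is precisely the point at which this pigeonhole must succeed: Lemma \ref{moore} in its stated form produces shifts which are \emph{uniform} in $i$ (the extra parameter $m$ contributes an equal shift to every entry $osc(a_p(i), b_m(j))$), whereas Theorem \ref{pw1} demands genuinely $i$-dependent column vectors. I expect to circumvent this by either composing Lemma \ref{moore} with itself in a nested fashion — so that each application contributes an independent offset visible to one specific $a(i)$ — or by descending to a direct construction based on the freedom of the lower trace $L(a(i), b(j))$ and the coherence of $\varrho_1$, in the spirit of \cite[Theorem 4.3]{moore06}. Establishing this refined form of the oscillation lemma — the heart of the argument — is where the bulk of the technical work will lie.
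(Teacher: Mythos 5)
You have correctly located the crux of the difficulty: Lemma \ref{moore} produces only \emph{uniform} shifts, i.e., the same additive constant $m$ applied to $osc(a_p(i),b_m(j))$ for every $i<k$ and $j<l$, whereas Theorem \ref{pw1} asks you to realize an arbitrary integer matrix $\langle c_{ij}\rangle$ with $c_{i0}=0$, in which the shift genuinely depends on both $i$ and $j$. Your base case $k=1$ is also not quite as routine as you describe: even for a single row $(0,c_{0,1},\ldots,c_{0,l-1})$, one application of Lemma \ref{moore} (even after first passing to subfamilies as in Theorem \ref{pw} so that the baseline matrix $\langle c'_{ij}\rangle$ is constant) only replaces $\langle c'_{ij}\rangle$ by $\langle c'_{ij}+m\rangle$ for a single uniform $m$; this does not reach an arbitrary target row unless the target already differs from the baseline by a constant.

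The first workaround you suggest --- iterating Lemma \ref{moore} in a nested fashion so that ``each application contributes an independent offset visible to one specific $a(i)$'' --- cannot succeed. A composition of uniform shifts is still a uniform shift: at every stage, the new family $\{b_m\}$ shifts all entries $osc(a(i),b_m(j))$ by the same $m$, so the set of achievable matrices remains a one-parameter family of translates of a fixed matrix. No amount of nesting decouples the rows. Your second workaround (``a direct construction based on the freedom of the lower trace and coherence of $\varrho_1$'') is in fact the right idea, but you leave it at the level of a heuristic.

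The actual proof in \cite{pw} does not go through Lemma \ref{moore} at all. It uses Lemma \ref{lempw} (Lemma~8 of \cite{pw}, restated in Section~5 of this paper) to reduce the task to producing a \emph{single} seed pair $(a_0,b_0)$ lying above $\delta=M\cap\omega_1$ for a countable elementary submodel $M$, satisfying $\varrho_{1a_0(i)}(\max L(\delta,b_0(j)))\,R\,\varrho_{1b_0(j)}(\max L(\delta,b_0(j)))$ and having the precise oscillation counts
$$\bigl|Osc(\varrho_{1a_0(i)},\varrho_{1b_0(j)};L(\delta,b_0(j)))\bigr|-\bigl|Osc(\varrho_{1a_0(i)},\varrho_{1b_0(0)};L(\delta,b_0(0)))\bigr|=c_{ij}.$$
The seed pair is then built by a finite induction in which, at each step, exactly one target pair $(i_0,j_0)$ acquires one additional oscillation while all other pairs $(i,j)$ are left fixed; the mechanism for this $(i,j)$-targeted surgery is a chain of nested elementary submodels manipulating $\varrho_1$ values along the lower trace, in the spirit of Lemma \ref{lem28} and Lemma \ref{l14}. (The Claim inside the proof of Theorem \ref{**consistency} in Section~5 is precisely the forcing analogue of this one-pair-at-a-time step, and reading it will show you exactly what the ZFC construction has to accomplish.) This targeted construction is what your proposal is missing: you need to manufacture $i$- and $j$-dependent oscillation counts at the level of the seed, not try to obtain them as a residue of the uniform-shift Lemma \ref{moore}.
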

  
  The proof of   \cite[Theorem 7]{pw} actually shows that we can take $\mathscr{A}'=\mathscr{B}'$ if $\mathscr{A}=\mathscr{B}$.
 \begin{cor}\label{cor1}
    For any uncountable family of pairwise disjoint sets $\mathscr{A}\subset [\omega_1]^k$, there are $\mathscr{A}'\in [\mathscr{A}]^{\omega_1}$ and
    $\langle c_{ij}: i, j<k\rangle \in \mathbb{Z}^{k\times k}$ such that for any $a,b$ in $\mathscr{A}'$, if $a<b$, then $osc(a(i),b(j))=osc(a(i),b(0))+c_{ij}$ for any $i,j<k$.
  \end{cor}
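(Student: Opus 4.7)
The plan is to revisit the proof of Theorem \ref{pw} in \cite{pw} and verify the assertion in the remark preceding this corollary: when $\mathscr{A} = \mathscr{B}$, the construction there produces subfamilies that can be taken to coincide.

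Recall that the proof in \cite{pw} fixes a countable elementary submodel $M \prec H(\aleph_2)$ containing $\mathscr{A}$ and $\mathscr{B}$. Using coherence of $\varrho_1$ (Fact \ref{f0}) together with Fact \ref{f1}, one attaches to each $a \in \mathscr{A} \setminus M$ a finite ``type''—for instance a tuple of $\varrho_1$-values on certain trace sets lying in $M$—and similarly to each $b \in \mathscr{B} \setminus M$; the analysis then shows that for $a < b$ with $a \in \mathscr{A} \setminus M$ and $b \in \mathscr{B} \setminus M$, the differences $osc(a(i), b(j)) - osc(a(i), b(0))$ are determined by these two types. Since each type takes only countably many values, pigeon-holing $\mathscr{A}$ by $a$-types and $\mathscr{B}$ by $b$-types yields uncountable monochromatic $\mathscr{A}', \mathscr{B}'$, and the matrix $\langle c_{ij} \rangle$ is read off from the resulting constant types.

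In the symmetric case $\mathscr{A} = \mathscr{B}$, each $a \in \mathscr{A} \setminus M$ simultaneously carries an $a$-type and a $b$-type, each drawn from a countable set of possibilities. I would pigeon-hole once by the \emph{pair} of types to obtain a single uncountable $\mathscr{A}' \subseteq \mathscr{A}$ on which both types are constant. Then for any $a < b$ in $\mathscr{A}'$, applying the analysis of \cite{pw} to the ordered pair $(a, b)$ gives $osc(a(i), b(j)) = osc(a(i), b(0)) + c_{ij}$, with the $c_{ij}$ depending only on the now-constant pair of types.

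The step I expect to require most care is checking that the type assigned to an element in its ``$a$-role'' is intrinsic to that element and does not depend on a choice of partner $b$ (and symmetrically for the $b$-role), so that the joint coloring by pairs of types is well-defined. This should fall out of a careful reading of the subfamily construction in \cite{pw}, where the types are phrased in terms of $\varrho_1$-values on trace sets that are determined by the element alone together with parameters already sitting in $M$; once that is confirmed, the ``$a=b$'' diagonal specialization of the original argument gives the corollary directly.
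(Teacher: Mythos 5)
Your plan is in the right spirit: the paper indeed proves Corollary~\ref{cor1} by revisiting the proof of \cite[Theorem 7]{pw} and running the symmetric case, exactly as you propose. But the ``pigeon-hole by pairs of intrinsic types'' framing obscures the precise mechanism the paper uses, which is the \emph{moreover} clause of Lemma~\ref{lempw} (= \cite[Lemma 8]{pw}): if the hypothesis of that lemma can be satisfied with a single witness $a_0 = b_0$, then the resulting uncountable subfamilies can be taken to coincide, $\mathscr{A}' = \mathscr{B}'$. There is no separate double pigeon-hole; the lemma hands you one family once the diagonal witness is found. Your worry about whether types are intrinsic is a real one in the two-family setting of Lemma~\ref{lempw}---the $a$-role type of $a_0$, namely $\langle \varrho_{1a_0(i)}(\max L(\delta,b_0(j))) : i,j\rangle$, genuinely depends on the partner $b_0$ because the evaluation points $\max L(\delta,b_0(j))$ do---but for the diagonal choice $a_0 = b_0$ all of this collapses to data computed from $a_0$ and $\delta = M\cap\omega_1$ alone, so the concern evaporates rather than requiring a careful check as you anticipate.

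What your sketch does skip, and what actually requires an argument, is verifying the hypothesis of Lemma~\ref{lempw} for $a_0 = b_0$: since taking $i=j$ forces equality, the relation $R$ must be ``$=$'', i.e.\ one needs $\varrho_{1a_0(i)}(\max L(\delta,a_0(j))) = \varrho_{1a_0(j)}(\max L(\delta,a_0(j)))$ for all $i,j<k$. That is not automatic, because $\max L(\delta,a_0(j))$ need not lie above the finite set on which $\varrho_{1a_0(i)}$ and $\varrho_{1a_0(j)}$ disagree (Fact~\ref{f0}). The standard fix is the device appearing in the Claim inside Lemma~\ref{lem30}: choose $\tau<\delta$ above both the discrepancy set and $\bigcup_j L(\delta,a_0(j))$, set $\xi = \min(C_\delta\setminus\tau)$, and replace $\delta$ by a suitable $\eta\in(\xi,\delta)$ of the form $M'\cap\omega_1$; then Fact~\ref{f1}(2) yields $L(\eta,a_0(j)) = L(\eta,\delta)\cup L(\delta,a_0(j))$ with $\max L(\eta,a_0(j))\geq\xi>\tau$, giving the required equality. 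With that in place, invoking the ``moreover'' clause of Lemma~\ref{lempw} finishes the proof.
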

  \textbf{Observation.} If we look into the proof of \cite[Theorem 7]{pw} closely, then we can observe that in Corollary \ref{cor1}, for each $i<k$,  $c_{ii}=\min\{c_{ij}: j<k\}$. The reason is that an initial part of $\varrho_{1a(i)}$ agrees with the initial part of $\varrho_{1b(i)}$. And consequently,  $\varrho_{1a(i)}$ and $\varrho_{1b(i)}$ admit no oscillation on an initial part of $L(a(i), b(i))$ while the oscillation between $\varrho_{1a(i)}$ and $\varrho_{1b(i)}$ on the rest tail part of $L(a(i), b(i))$ agrees with the oscillation between $\varrho_{1a(i)}$ and $\varrho_{1b(j)}$ on a tail part of $L(a(i), b(j))$. So if we define $c_{ij}'\stackrel{\triangle}{=}c_{ij}-c_{ii}$,  then each $c_{ij}'\geq 0$ and in Corollary \ref{cor1}, we obtain $osc(a(i),b(j))=osc(a(i),b(i))+c_{ij}'$ for any $i,j<k$.

  Thus Corollary \ref{cor1} provides a homogeneous set with the same pattern of inhomogeneity. Like the relationship between Theorem \ref{pw} and Theorem \ref{pw1}, it is also interesting to get a homogeneous set with the same prefixed pattern of inhomogeneity. This can be stated precisely as the following:\medskip

 $(\ast\ast)$ for any $X\in [\omega_1]^{\omega_1}$, for any $k<\omega$, for any $\langle c_{ij}: i,j<k\rangle\in \omega^{k\times k}$ such that $c_{ii}=0$ for $i<k$, there is an uncountable
  family $\mathscr{A}\subset [X]^k$ of pairwise disjoint sets such that for any $a,b\in \mathscr{A}$, if $a<b$, then $osc(a(i),b(j))=osc(a(i),b(i))+c_{ij}$ for $i,j<k$.\medskip

  However, unlike Corollary \ref{cor1}, $(\ast\ast)$ is independent of ZFC. We first discuss the positive direction.    We will need  \cite[Lemma 8]{pw}   in the following proof.
    \begin{lem}[\cite{pw}]\label{lempw}
    Suppose that $\mathscr{A}\subset [\omega_1]^k$,
    $\mathscr{B}\subset [\omega_1]^l$, and $M$ is a countable elementary submodel of $H(\aleph_2)$ containing $\mathscr{A},\mathscr{B}$. Let $\delta=M\cap \omega_1$ and $R$ in $\{=,>\}$.  If there are $a_0\in \mathscr{A}\cap [\omega_1\setminus \delta]^k$, $b_0\in \mathscr{B}\cap [\omega_1\setminus \delta]^l$ such that for any $i<k,j<l$, $\varrho_{1a_0(i)}(\max L(\delta,b_0(j)))$ $R$ $\varrho_{1b_0(j)}(\max L(\delta,b_0(j)))$, then there are two uncountable sub-families of pairwise disjoint sets $\mathscr{A}'\subset\mathscr{A}, \mathscr{B}'\subset \mathscr{B}$ such that for any $a\in \mathscr{A}'$, $b\in \mathscr{B}'\cap [\omega_1\setminus \max a]^l$, for any $i<k,j<l$, $$osc(a(i),b(j))=osc(a(i),b(0))+c_{ij} \text{ where} $$ $c_{ij}=|Osc(\varrho_{1a_0(i)},\varrho_{1b_0(j)};L(\delta,b_0(j)))|-|Osc(\varrho_{1a_0(i)},\varrho_{1b_0(0)};L(\delta,b_0(0)))|$.

    Moreover, if we can choose $a_0=b_0$, then we can choose $\mathscr{A}'=\mathscr{B}'$. 
  \end{lem}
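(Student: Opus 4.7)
The plan is to stabilize finitely many combinatorial parameters of pairs $(a,b)$ modulo $\delta$ on uncountable subfamilies $\mathscr{A}' \subset \mathscr{A}$ and $\mathscr{B}' \subset \mathscr{B}$ via elementarity, then use Fact \ref{f1} to decompose each lower trace $L(a(i), b(j))$ into a ``high'' piece controlled by the walk near $a(i)$ (and thus independent of $j$) and a ``low'' piece matching the reference data at $(\delta, b_0)$.

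Stabilization. Let $G \subset \delta$ be the finite set consisting of $\bigcup_{j<l} L(\delta, b_0(j))$ together with the (finite, by coherence of $\varrho_1$) sets $\{\xi < \delta : \varrho_{1a_0(i)}(\xi) \neq \varrho_{1\delta}(\xi)\}$ and $\{\xi < \delta : \varrho_{1b_0(j)}(\xi) \neq \varrho_{1\delta}(\xi)\}$. Each element of $G$ lies in $\delta \subset M$ and each associated $\varrho_1$-value is a natural number in $M$, so the finite tuple $(G, \varrho_{1a_0(i)} \up G, \varrho_{1b_0(j)} \up G, L(\delta, b_0(j)))_{i<k,\,j<l}$ is an element of $M$. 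By elementarity, the set
\[
\{a \in \mathscr{A} : \varrho_{1a(i)} \up G = \varrho_{1a_0(i)} \up G \text{ for all } i<k\}
\]
is definable over $M$ and contains the non-$M$ element $a_0$, hence is uncountable; similarly for $\mathscr{B}$. Extract pairwise disjoint uncountable $\mathscr{A}' \subset \mathscr{A}$ and $\mathscr{B}' \subset \mathscr{B}$, and refine once more (using the same finite data) to ensure $L(\delta, b(j)) = L(\delta, b_0(j))$ for all $b \in \mathscr{B}'$ and $j<l$. Then for every $a \in \mathscr{A}'$, $b \in \mathscr{B}'$, $i<k$, $j<l$, the hypothesised $R$-relation holds uniformly at $\max L(\delta, b(j))$, and $|Osc(\varrho_{1a(i)}, \varrho_{1b(j)}; L(\delta, b(j)))| = |Osc(\varrho_{1a_0(i)}, \varrho_{1b_0(j)}; L(\delta, b_0(j)))|$.

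Decomposition and counting. Fix $a \in \mathscr{A}'$ and $b \in \mathscr{B}' \cap [\omega_1 \setminus \max a]^l$. Using Fact \ref{f1}(1), we refine further so that for each $j<l$ the walk from $b(j)$ to $a(i)$ passes through an intermediate ordinal $\beta_j^*$ at which $L(\beta_j^*, b(j)) < L(a(i), \beta_j^*)$. Fact \ref{f1}(2) then yields the decomposition $L(a(i), b(j)) = L(a(i), \beta_j^*) \cup L(\beta_j^*, b(j))$. The high piece $L(a(i), \beta_j^*)$ depends only on the walk in $[\beta_j^*, a(i)]$ and contributes the same oscillation count to $osc(a(i), b(j))$ as the analogous piece does to $osc(a(i), b(0))$ (up to the stabilized finite data on $G$). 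The low piece $L(\beta_j^*, b(j))$ matches $L(\delta, b_0(j))$ together with the stabilized $\varrho_1$-values, so its oscillation contribution is exactly $|Osc(\varrho_{1a_0(i)}, \varrho_{1b_0(j)}; L(\delta, b_0(j)))|$. Subtracting the two decomposed sums (for $j$ and for $0$) gives $osc(a(i), b(j)) - osc(a(i), b(0)) = c_{ij}$, as required.

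For the moreover clause, if $a_0 = b_0$ then the stabilization constraints on $\mathscr{A}$ and on $\mathscr{B}$ coincide, so a single pairwise disjoint $\mathscr{A}' = \mathscr{B}'$ suffices. The main obstacle is controlling the glue point $\beta_j^*$ so that the transition between the high and low pieces of $L(a(i), b(j))$ introduces no spurious oscillation; this is precisely where the $R$-dichotomy is essential, since whether $\varrho_{1a(i)}(\max L(\delta, b(j)))$ equals or strictly exceeds $\varrho_{1b(j)}(\max L(\delta, b(j)))$ determines whether $\max L(\delta, b(j))$ contributes to the oscillation count, and the hypothesis ensures that this behaviour is uniform across the family.
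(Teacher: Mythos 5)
Your high-level outline is recognisable --- stabilize finite data in $M$, decompose the lower trace, match pieces --- but the execution has gaps that go to the heart of why this lemma is nontrivial, and the argument as written does not close them.

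First, the decomposition is organised incorrectly. You choose a glue point $\beta_j^*$ that depends on $j$. But then the ``high pieces'' $L(a(i),\beta_j^*)$ and $L(a(i),\beta_0^*)$ are supported on \emph{different} sets, so the claim that ``the high piece contributes the same oscillation count to $osc(a(i),b(j))$ as to $osc(a(i),b(0))$'' has no basis; the two high contributions simply do not cancel. What is needed is a single splitting ordinal $\delta'$ (the trace of a countable elementary submodel $N$ sitting between $\max a$ and $\min b$) so that $L(a(i),b(j))=L(a(i),\delta')\cup L(\delta',b(j))$ uniformly in $j$, and then the high contribution is literally the same term for every $j$ once one checks, via coherence, that $\varrho_{1b(j)}$ and $\varrho_{1b(0)}$ agree on $L(a(i),\delta')$. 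Relatedly, the phrase ``we refine further so that for each $j<l$ the walk from $b(j)$ to $a(i)$ passes through an intermediate ordinal $\beta_j^*$'' occurs after you have already fixed $a$ and $b$; you cannot refine $\mathscr{A}',\mathscr{B}'$ at that point, since the conclusion must hold for every $a<b$ drawn from the families you already committed to.

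Second, the crucial transfer from $\delta'$ to $\delta$ is asserted but not argued. You write that ``the low piece $L(\beta_j^*,b(j))$ matches $L(\delta,b_0(j))$ together with the stabilized $\varrho_1$-values.'' But $\delta'\neq\delta$ and $b\neq b_0$; your stabilization at most forces $L(\delta,b(j))=L(\delta,b_0(j))$ and agreement of $\varrho_1$-data on the finite set $G\subset\delta$, which says nothing about $L(\delta',b(j))$ for the new splitting ordinal $\delta'$. Showing that the low-piece oscillation count at an arbitrary admissible $\delta'$ equals the reference count at $\delta$ is precisely the reflection argument that carries the weight of the lemma; without it the ``matching'' step is circular. (There is also a subtlety you gloss over: conditions such as $L(\delta,b(j))=L(\delta,b_0(j))$ involve the parameter $\delta=M\cap\omega_1\notin M$, so one has to exhibit an $M$-definable surrogate, typically by passing to sufficiently large $\eta\in M\cap\delta$ and using coherence.) Finally, the $R$-dichotomy --- the only substantive hypothesis of the lemma --- appears only in a closing heuristic remark and plays no role in any actual computation. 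Its job is to fix the boundary contribution at the splice point $\min L(a(i),\delta')$: when $R$ is $>$ the boundary never oscillates; when $R$ is $=$ whether it oscillates depends only on $\varrho_{1a(i)}$ and $\varrho_{1b(j)}$ at $\min L(a(i),\delta')$, which by coherence is $j$-independent, so it cancels in $osc(a(i),b(j))-osc(a(i),b(0))$. A proof that never uses this hypothesis cannot be correct, since the conclusion fails without it.
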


  \begin{thm}\label{**consistency}
    After adding a Cohen real, there exists a $C$-sequence so that $(\ast\ast)$ holds.
  \end{thm}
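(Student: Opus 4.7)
My approach is to construct, in the Cohen extension $V[c]$, a $C$-sequence $\vec{C}^*$ by perturbing a ground-model $C$-sequence using the Cohen real, and then to verify $(\ast\ast)$ via Lemma \ref{lempw} combined with a density argument in the Cohen forcing.

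First I would fix in $V$ a $C$-sequence $\vec{D}$ and define $\vec{C}^*$ in $V[c]$ by locally perturbing $\vec{D}$ at each countable limit level under the control of the Cohen real $c$, while preserving the $C$-sequence axioms (cofinality and order type $\omega$). The role of the perturbation is to inject generic freedom into the induced $\varrho_1$, $L$, and $Osc$ maps, so that arbitrary oscillation patterns become available on arbitrary tails above a fixed countable ordinal.

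Second, given $(X, k, \langle c_{ij}\rangle) \in V[c]$ and a countable elementary submodel $M \prec H(\aleph_2)^{V[c]}$ containing the triple (with $\delta = M \cap \omega_1$), the ``moreover'' clause of Lemma \ref{lempw} applied with $a_0 = b_0$ reduces $(\ast\ast)$ to finding a single tuple $a_0 \in [X \setminus \delta]^k$ satisfying the coherence hypothesis $\varrho_{1 a_0(i)}(\max L(\delta, a_0(j))) = \varrho_{1 a_0(j)}(\max L(\delta, a_0(j)))$ together with
\[ |Osc(\varrho_{1 a_0(i)}, \varrho_{1 a_0(j)}; L(\delta, a_0(j)))| = c_{ij} \]
for all $i, j < k$; the diagonal case is automatic since $Osc(s,s;F) = \emptyset$, matching $c_{ii} = 0$. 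Lemma \ref{lempw} then delivers an uncountable $\ms{A}' = \ms{B}'$ on which $osc(a(i), b(j)) = osc(a(i), b(0)) + c_{ij}^L$ where $c_{ij}^L = |Osc(\ldots; L(\delta, a_0(j)))| - |Osc(\ldots; L(\delta, a_0(0)))|$; subtracting the $j = i$ instance collapses the $a_0(0)$-reference term and yields the $(\ast\ast)$-form $osc(a(i), b(j)) = osc(a(i), b(i)) + c_{ij}$.

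Third, I would run the density argument: for each Cohen condition $p$ and each datum $(X, k, \langle c_{ij}\rangle, M)$, the set of $q \leq p$ forcing the existence of such an $a_0$ is dense. The $Osc$-counts above involve $\vec{C}^*$ only at finitely many ordinals lying on the upper traces from $\delta$ to the $a_0(j)$'s, so an appropriate extension of $p$ specifying $\vec{C}^*$ at those finitely many places should be able to realize any prescribed matrix in $\omega^{k\times k}$; by elementarity one then names a suitable $a_0$ above $\delta$.

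The main obstacle is verifying that the perturbation recipe genuinely provides enough local freedom to realize arbitrary $\omega$-valued matrices (rather than only those patterns already encoded in $\vec{D}$), while maintaining $C$-sequence coherence so that $\varrho_1$ and $L$ retain their defining recursions and the coherence hypothesis of Lemma \ref{lempw} can be simultaneously met. Preservation of $\omega_1$ and the availability of countable elementary submodels of $H(\aleph_2)^{V[c]}$ containing arbitrary $V[c]$-triples follow from the c.c.c.\ of Cohen forcing.
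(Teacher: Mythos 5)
Your high-level reduction is the same as the paper's: after fixing a countable elementary submodel $M\prec H(\aleph_2)^{V[c]}$ with $\delta=M\cap\omega_1$, the ``moreover'' clause of Lemma~\ref{lempw} applied with $a_0=b_0$ reduces $(\ast\ast)$ to finding a single $a\in[X\setminus\delta]^k$ with $|Osc(\varrho_{1a(i)},\varrho_{1a(j)};L(\delta,a(j)))|=c_{ij}$ and $\varrho_{1a(i)}(\max L(\delta,a(j)))=\varrho_{1a(j)}(\max L(\delta,a(j)))$ for all $i,j<k$, and then running a density argument over the Cohen poset. That much is correct and does coincide with the paper's strategy.

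The genuine gap is precisely the part you flag as ``the main obstacle'': how to build a $C$-sequence from the Cohen real with enough \emph{coordinated} local freedom to realize an arbitrary matrix. The paper does \emph{not} perturb a ground-model $C$-sequence $\vec D$; it constructs $\{C_\alpha^r:\alpha<\omega_1\}$ from scratch, by fixing in $V$ an almost-disjoint family $\{A_\alpha\subset\omega:\alpha\in\Lambda\}$ and bijections $\pi_\alpha:A_\alpha\to\alpha$ satisfying a coding property (condition $(ii)$), and then reading $C_\alpha^r$ off from $A_\alpha\cap r$ via $\pi_\alpha$. The almost disjointness is what lets a single Cohen condition extend the ladders at finitely many ordinals \emph{independently}, and the coding property $(ii)$ is what guarantees that at each step one can choose fresh ``inactive'' ordinals $\gamma\in\Lambda$ with $\pi_\alpha^{-1}(\gamma)<\min A_\gamma$, so that $C_\gamma$ is still undecided and can serve as the next waypoint of the walk. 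Without such structure, ``locally perturbing $\vec D$'' gives you no mechanism to control $\varrho_1$ and $L$ along an entire walk from $a(j)$ down past $\delta$ -- the oscillation count depends not only on the ladders at $a(j)$ and at points in $L(\delta,a(j))$, but on the ladders at all intermediate ordinals visited by the walk, and these have to be grown in lockstep. The heart of the proof is the technical Claim (items (1)--(5) there), which shows how to extend a condition so as to increase exactly one designated $Osc$-count $|Osc(\varrho_{1a(i_0)},\varrho_{1a(j_0)};L^q(\delta,a(j_0)))|$ by one while leaving the others unchanged and preserving the coherence hypothesis; iterating it $\sum c_{ij}$ times yields the target matrix. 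Your proposal asserts that ``an appropriate extension of $p$ \dots\ should be able to realize any prescribed matrix'' but provides no construction supporting this, and it is not clear that any ``perturbation'' of a fixed $\vec D$ could supply the needed step-by-step control. So the argument as proposed has a missing core: the specific coding apparatus and the extension Claim are exactly what makes the density argument work, and they do not follow from the general observation that $Osc$-counts depend on only finitely many coordinates.

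Two smaller points: first, in the density argument the paper passes to the case $X,C\in V$ (using ccc) before arguing; you should do the same, since otherwise the ``finitely many ordinals'' you want to control are only named by a $\mathcal P$-name. Second, after realizing $(\star)$ for one $\delta$ and $a$, one still needs the elementarity step (the paper's paragraph beginning ``To see this, assume $(\star)$ is satisfied\ldots'') to produce a stationary set of such $\delta$ and then an uncountable pairwise-disjoint $\mathscr A$ to feed into Lemma~\ref{lempw}; your sketch goes straight from one $a_0$ to the conclusion, which elides this step.
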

  \begin{proof}
  Let $\Lambda$ denote the set of non-zero countable limit ordinals. In the ground model, partition $\Lambda$ into countably many unbounded subsets $\langle \Lambda_n   \mid  n<\omega \rangle$. For $X\subset \omega_1$, let $acc(X)$ be the collection of accumulation points of $X$, i.e., $\alpha\in acc(X)$ iff $X\cap \alpha$ is unbounded below $\alpha$.
  
   Fix in the ground model a family of almost disjoint family $\{A_\alpha\subset \omega: \alpha\in \Lambda\}$ such that 
  \begin{enumerate}[$(i)$]
  \item $\min A_\alpha>n$ whenever $\alpha\in \Lambda_n$. 
  \end{enumerate}
  Then for each $\alpha\in \Lambda$, fix a bijection $\pi_\alpha: A_\alpha\rightarrow \alpha$ satisfying the following:
\begin{enumerate}[$(i)$]\setcounter{enumi}{1}
  \item for any $\beta\in \bigcap\limits_{n\in \omega} acc(\Lambda_n)\cap \alpha$, there are unbounded many $\gamma\in \Lambda\cap\beta$ such that $\pi_\alpha^{-1}(\gamma)< \min A_\gamma$.
    \end{enumerate}
    To see the existence of $\pi_\alpha$, let $\langle \beta_n: n<\omega\rangle$ list $\beta\in \bigcap_{n\in \omega} acc(\Lambda_n)\cap \alpha$ such that each appears $\omega$ many times and an enumeration $\langle\alpha_n: n<\omega\rangle$ of $\alpha$. We   define $\pi_\alpha(A_\alpha(n))$ injectively by induction on $n$. At step $2n$, arbitrarily define $\pi_\alpha(A_\alpha(2n))$ so that $\alpha_n$ is in the range. At step $2n+1$,  choose $\gamma\in \Lambda_{A_\alpha(2n+1)}\cap \beta_n$ greater than  $\{\pi_\alpha(A_\alpha(i)): i\leq 2n\}\cap \beta_n$ and define $\pi_\alpha(A_\alpha(2n+1))=\gamma$. Note that by $(i)$, $\pi_\alpha^{-1}(\gamma)=A_\alpha(2n+1)<\min A_\gamma$. It is  easy to check that $\pi_\alpha$ is as desired.\medskip

  Now we force with the Cohen forcing $\mc{P}$ where each condition $p$ is a   map from $m$ to $2$ for some $m<\omega$. The order is extension as functions. As usual, from a generic filter $G$, we get a Cohen real $r=\{n: p(n)=1$ for some $p\in G\}$.\medskip

  Before proceeding to the proof, we describe the $C$-sequence $\{C^r_\alpha: \alpha<\omega_1\}$ induced from $A_\alpha$'s, $\pi_\alpha$'s and the Cohen real $r$.\footnote{See \cite[Lemma 2.2.17]{st07} for a related but different construction.}

  For zero, set $C^r_{0}=\emptyset$. For successor case, set $C^r_{\alpha+1}=\{\alpha\}$. For non-zero limit case, we inductively define the $n$-th member of $C^r_\alpha$ for $\alpha\in \Lambda$:
\begin{enumerate}[$(i)$]\setcounter{enumi}{2}
  \item $C_\alpha^r(0)=\pi_\alpha(\min(A_\alpha\cap r))$;\\
$C_\alpha^r(n+1)=\pi_\alpha(\min(\{k\in A_\alpha\cap r: k>\pi_\alpha^{-1}(C_\alpha^r(n)) \text{ and } \pi_\alpha(k)>C_\alpha^r(n)\}))$.
 \end{enumerate}
 
   An easy density argument will show that the minimum in the definition of $C^r_\alpha(l)$ ($l=0$ or $n+1$) exists and hence $\{C^r_\alpha: \alpha<\omega_1\}$ is well-defined.\medskip

   Next we introduce \emph{the minimal walk decided by a condition $p$}. For a condition $p:m\rightarrow 2$ and any ordinal $\alpha< \omega_1$, let $C^p_\alpha$ be the finite set 
   $$\{\xi: p\Vdash \xi\in C^{\dot{r}}_\alpha\}.$$ 
   Note that   $C^p_\alpha$ can be equivalently defined in the following way:
\begin{enumerate}[$(i)$]\setcounter{enumi}{3}
\item $C^p_{\alpha+1}=\{\alpha\}$;
  \item for  non-zero countable limit $\alpha$,
  \begin{itemize}
  \item $C_\alpha^p(0)=\pi_\alpha(\min(A_\alpha\cap p^{-1}(\{1\})))$ if exists;
 \item $C_\alpha^p(n+1)=\pi_\alpha(\min(\{k\in A_\alpha\cap p^{-1}(\{1\}): k>\pi_\alpha^{-1}(C_\alpha^p(n)) \text{ and } \pi_\alpha(k)>C_\alpha^p(n)\}))$ if exists;
\item Otherwise if the minimum does not exist, then $C^p_\alpha(l)$ ($l=0$ or $n+1$) is undefined.
\end{itemize}
  \end{enumerate}
  Recall that the domain of each condition $p$ is a natural number.  So it is easy to deduce from definitions $(iii)-(v)$ that for any $\alpha<\omega_1$,
  \begin{enumerate}[$(i)$]\setcounter{enumi}{5}
 \item for any $p< q$, $C^q_\alpha$ is an initial segment of $C^p_\alpha$; for any $p\in G$, $C^p_\alpha$ is an initial segment of $C^r_\alpha$.
  \end{enumerate}
  
  For each $\alpha<\omega_1$, we also inductively define $F^p_\alpha\subset \alpha+1$ which codes all the walks from $\alpha$ decided by a condition $p$:
\begin{enumerate}[$(i)$]\setcounter{enumi}{6}
 \item $F_\alpha^p=\{\alpha\}\cup \underset{\xi\in C^p_\alpha}{\bigcup} F^p_\xi$.
 \end{enumerate}
  Note that for any $\xi\in F^p_\alpha$, there is a sequence of ordinals in $F^p_\alpha$, $\alpha=\alpha_0>\alpha_1>...>\alpha_n=\xi$ such that $\alpha_{i+1}\in C^p_{\alpha_i}$ for any $i<n$. Hence each $\xi\in F^p_\alpha$ associates a node in
  $$T=\{s\in (\alpha+1)^{<\omega}: s(0)=\alpha\text{ and }s(i+1)\in C^p_{s(i)}\text{ for any }i+1<|s| \}.$$

  Since $T$ is a finite branching tree with no infinite branch, $T$ and hence 
  \begin{enumerate}[$(i)'$]\setcounter{enumi}{6}
  \item $F^p_\alpha$ is finite.
 \end{enumerate}
  We now inductively define the upper trace and lower trace decided by  $p$.

 \begin{enumerate}[$(i)$]\setcounter{enumi}{7}
  \item  $Tr^p(\alpha,\beta)=\{\beta\}$ if $C^p_\beta\subset \alpha$;\\
  $Tr^p(\alpha,\beta)=\{\beta\}\cup Tr^p(\alpha, \min (C^p_\beta\setminus \alpha))$ otherwise.\\
 $L^p(\alpha,\beta)=\emptyset$ if $C^p_\beta=\emptyset$ or $\alpha=\beta$;\\
 $L^p(\alpha,\beta)=\{\max (C^p_\beta\cap \alpha)\}$ if $C^p_\beta\neq \emptyset$, $\beta>\alpha$ and $C^p_\beta\setminus \alpha=\emptyset$;\\
 $L^p(\alpha,\beta)=L^p(\alpha,\min (C^p_\beta\setminus \alpha))\cup\{\max (C^p_\beta\cap \alpha)\}\setminus \max (C^p_\beta\cap \alpha)$ otherwise.
 \end{enumerate}

    We also define the maximal weight decided by $p$:
 \begin{enumerate}[$(i)$]\setcounter{enumi}{8}
   \item $\varrho_1^p(\alpha,\beta)=\max\{|C^p_\xi\cap \alpha|: \xi\in Tr^p(\alpha,\beta)\setminus\{\alpha\}\}$.\medskip
\end{enumerate}

It is straightforward to check that for $\alpha\in Tr^p(\alpha,\beta)$,
\begin{enumerate}[(I)]
\item $p\Vdash ``Tr^p(\alpha,\beta)=Tr(\alpha,\beta),L^p(\alpha,\beta)=L(\alpha,\beta)$ and $\varrho^p_1(\alpha,\beta)=\varrho_1(\alpha,\beta)$''.
\end{enumerate}
Here $Tr(\cdot,\cdot), L(\cdot,\cdot), \varrho_1(\cdot,\cdot)$ are computed from the C-sequence $\{C^r_\alpha: \alpha<\omega_1\}$.


  Now we are ready to prove that the $osc$ defined from $\{C^r_\alpha: \alpha<\omega_1\}$ satisfies $(\ast\ast)$. Fix $X\in [\omega_1]^{\omega_1}$ and a continuous $\in$-chain $\{N_\alpha:\alpha<\omega_1\}$ of countable elementary submodels of $H(\aleph_2)$ containing everything relevant. Choose a club $C\subset \{N_\alpha\cap \omega_1:\alpha<\omega_1\}$. Going to subsets, we may assume that $X,C$ are in the ground model. By Lemma \ref{lempw}, it suffices to find $\delta \in C$ and
  $a\in [X\setminus \delta]^k$ such that 
  \begin{enumerate}[($\star$)]
  \item  for any $i, j<k$,  $|Osc(\varrho_{1a(i)},\varrho_{1a(j)};L(\delta,a(j)))|=c_{ij}$ and \hfill \break
  $\varrho_{1a(i)}(\max L(\delta,a(j)))=\varrho_{1a(j)}(\max L(\delta,a(j)))$.
  \end{enumerate}
  
  To see this, assume $(\star)$ is satisfied for $\delta\in C$ and $a\in [X\setminus \delta]^k$. By elementarity,
  $$A=\{\delta'<\omega_1: (\star) \text{ is satisfied when replacing } \delta, a \text{ by }\delta' \text{ and some } a'\in [X\setminus \delta']^k\}$$
   is stationary. For each $\delta'\in A$, fix a witness $a_{\delta'}\in [X\setminus \delta']^k$. Going to a stationary subset $A'\subset A$, we get $\mathscr{A}=\{a_{\delta'}: \delta'\in A'\}$ that is pairwise disjoint. Now applying Lemma \ref{lempw} to $\mathscr{A}$,  some countable elementary submodel $M$ such that $M\cap \omega_1\in A'$ and $a_{M\cap \omega_1}$, we get an uncountable $\mathscr{A}'\subset \mathscr{A}$. Then $\mathscr{A}'$ is a witness for $(**)$.\medskip

  Via the density argument, we only need to find, for each condition $p\in \mathcal{P}$,   
  a condition $q\leq p$, $\delta \in C$ and
  $a\in [X\setminus \delta]^k$ such that $q$ forces ``$a,\delta$ satisfy $(\star)$''.

  Now we fix a condition $p\in \mathcal{P}$. First find uncountable $X'\subset X$, $L\in [\omega_1]^{<\omega}$ and $s: L\rightarrow \omega$ such that
 \begin{enumerate}[(a)]
 \item $\{F^p_\alpha: \alpha\in X'\}$ forms a $\Delta$-system with root $F$;
 \item  for any $\alpha\in X'$,   $L^p(\max F+1,\alpha)=L$ and $\varrho^p_{1\alpha}\upharpoonright_L=s$.
 \end{enumerate}
This can be done since by $(vii)'$, $F^p_\alpha$ is finite.

Note  by $(vii), (viii)$, for any $\alpha\in X'$,
\begin{enumerate}[(a)]\setcounter{enumi}{2}
\item $\min Tr^p(\max F +1,\alpha)\in F^p_\alpha\setminus F$. So by (a), for any $\beta\neq \alpha$ in $X'$, $\min Tr^p(\max F +1,\beta) \neq \min Tr^p(\max F +1,\alpha)$.
\end{enumerate}
   Let $B=\{ \min Tr^p(\max F +1,\alpha): \alpha\in X'\}$.\medskip

  \noindent\textbf{Claim.} Suppose $p'\leq p$, $\delta\in C\setminus (\max F+1)$, $b\in [\bigcap\limits_{n<\omega}acc(\Lambda_n)\setminus \delta]^{4}$, $a\in [\omega_1]^k$ such that $\min Tr^{p'}(\delta, a(i))>b(3)$ for any $i<k$ and $\min Tr^{p'}(\delta, a(i))$'s are pairwise distinct. Suppose moreover that for any $i,j<k$,
  \begin{enumerate}
 \item   for any $\nu\in L^{p'}(\delta,a(j))$, $\nu\in Tr^{p'}(\nu,a(i))$;
   \item $\varrho^{p'}_{1a(i)}(\max L^{p'}(\delta,a(j)))=\varrho^{p'}_{1a(j)}(\max L^{p'}(\delta,a(j)))$.
   \end{enumerate}
   Then for any $i_0\neq j_0$ less than $k$, there is a condition $q\leq p'$ such that for any $i,j<k$, (1)-(2) hold when replacing $p'$ by $q$ and

  \begin{enumerate}\setcounter{enumi}{2}
    \item $L^{p'}(\delta,a(i))$ is a proper initial segment of $L^{q}(\delta,a(i)))$;

    \item $\min Tr^{q}(\delta, a(i))$'s are pairwise distinct and above $b(0)$;

    \item there is a $\xi>\max L^{p'}(\delta,a(j_0))$ such that 
    $$Osc(\varrho^{q}_{1a(i)},\varrho^{q}_{1a(j)};L^{q}(\delta,a(j)))=Osc(\varrho^{p'}_{1a(i)},\varrho^{p'}_{1a(j)};L^{p'}(\delta,a(j)))\cup \chi_{ij}$$ where $\chi_{ij}=\{\xi\}$ if $(i,j)=(i_0,j_0)$ and $\chi_{ij}=\emptyset$ otherwise;

  \end{enumerate}
  \begin{proof}[Proof of Claim.]
    Let $\beta'_i=\min Tr^{p'}(\delta,a(i))$ for any $i<k$. Note by our assumption, $\beta'_i$'s are pairwise distinct and above $b(3)$. Choose $n_0$ greater than 
    $$\max(\{\max \varrho^{p'}_{1a(i)}\upharpoonright_{L^{p'}(\delta,a(j))}: i,j<k\}\cup\{|C^{p'}_{\beta'_i}\cap \delta|: i<k\}\cup\{dom(p')\})$$
     such that 
    $$\{A_\alpha\setminus n_0: \alpha \in Tr^{p'}(\delta,a(i))\text{ for some }i<k\}\text{ are pairwise disjoint}.$$
    
    Firstly, we deal with requirement (3). By $(ii)$, find pairwise distinct elements $\langle \beta_i\in \Lambda\cap (b(2),b(3)): i<k\rangle$ such that   for $i<k$,
    $$ \pi^{-1}_{\beta'_i}(\beta_i)< \min A_{\beta_i}\text{ and}$$
    $$\pi^{-1}_{\beta'_i}(\beta_i)>\max(\{n_0\}\cup \{  \pi^{-1}_{\beta'_j}(\beta_j): j<i\}\cup \bigcup\{A_{\beta'_i}\cap A_{\beta_j}: j<i\}).$$
    Extend $p'$ to $q_0$ such that 
    $$q_0(\pi_{\beta'_i}^{-1}(\beta_i))=1\text{ for } i<k \text{ and}$$
     $$q_0(m')=0\text{ for the rest }m'\in dom(q_0)\setminus dom(p').$$
     
     We now check that for any $i<k$,
      \begin{enumerate}[(6.1)]
      \item $C^{q_0}_{\beta'_i}=C^{p'}_{\beta'_i}\cup \{\beta_i\}$;
      \item $C^{q_0}_{\beta_i}=\emptyset$.
       \end{enumerate}     
       To see (6.1), first recall that  $n_0 <\pi^{-1}_{\beta'_i}(\beta_i)$ and $(A_{\beta_i'}\setminus n_0)\cap A_{\beta_j'}=\emptyset$ for $i\neq j$.  Then $A_{\beta'_i}\cap q_0^{-1}\{1\}\setminus p'^{-1}\{1\}=\{\pi_{\beta'_i}^{-1}(\beta_i)\}$. So by $(vi)$, we only need to show that $\beta_i\in C^{q_0}_{\beta'_i}$. We will prove for the case $C^{p'}_{\beta'_i}\neq \emptyset$ and the other case is easier to handle.  On one hand, by $(v)$ and our choice of $n_0$, $\beta_i$, $\pi^{-1}_{\beta'_i}(\max(C^{p'}_{\beta'_i}))\in dom(p')<n_0<\pi^{-1}_{\beta'_i}(\beta_i)$. On the other hand, by our definition $\beta'_i=\min Tr^{p'}(\delta, a(i))$, $C^{p'}_{\beta_i'}\subset \delta< b(1)<\beta_i$. Now   (6.1) follows from $(v)$.
       
       (6.2) follows from the fact that for $i, j<k$, $dom(p')<n_0<\pi^{-1}_{\beta'_i}(\beta_i)<\min A_{\beta_i}$ and $\pi^{-1}_{\beta'_j}(\beta_j)\notin A_{\beta_i}$. To see this, for $j\leq i$, $\pi^{-1}_{\beta'_j}(\beta_j)\leq \pi^{-1}_{\beta'_i}(\beta_i)<\min A_{\beta_i}$. For $j>i$, $\max(A_{\beta'_j}\cap A_{\beta_i})< \pi^{-1}_{\beta'_j}(\beta_j)\in A_{\beta'_j}$.
       
       Then we check the following.
       \begin{enumerate}[(6.1)]\setcounter{enumi}{2}
      \item $Tr^{q_0}(\delta, a(i))=Tr^{p'}(\delta, a(i))\cup \{\beta_i\}$ and $L^{q_0}(\delta,a(i))=L^{p'}(\delta,a(i))$.
      \end{enumerate} 
      The first equality follows from (6.1), (6.2) and our definition $\beta'_i=\min Tr^{p'}(\delta, a(i))$. The second equality follows from (6.1), (6.2) and $(viii)$.
      
      Similarly, (6.1)-(6.3) and $(ix)$ show the following.
      \begin{enumerate}[(6.1)]\setcounter{enumi}{3}
      \item  $\varrho_{1a(i)}^{q_0}\up_{L^{q_0}(\delta,a(i))}=\varrho_{1a(i)}^{p'}\up_{L^{q_0}(\delta,a(i))}$.
       \end{enumerate} 
     \medskip
     
         Secondly, we find $\xi$ for requirement (5). Choose $n_1>dom(q_0)$ such that 
    $$ \{A_\alpha\setminus n_1: \alpha \in Tr^{q_0}(\delta,a(i))\text{ for some }i<k\} \text{ are pairwise disjoint}.$$
         Choose $\xi'_0<...<\xi'_{n_1}$ in $(\max\bigcup\{L^{q_0}(\delta,a(i)):i<k\},\delta)$ such that $$n_1<\pi_{\beta_{j_0}}^{-1}(\xi'_0)<...<\pi_{\beta_{j_0}}^{-1}(\xi'_{n_1}).$$
         We denote $\xi=\xi'_{n_1}$ and will show that $\xi$ satisfies requirement (5).

         By  $(ii)$, choose $\gamma'\in \Lambda\cap(b(1),b(2))$ such that $$\pi_{\beta_{j_0}}^{-1}(\xi'_{n_1})<\pi_{\beta_{j_0}}^{-1}(\gamma')< \min A_{\gamma'}.$$

    Extend $q_0$ to $q_1$ such that 
    $$q_1(\pi_{\beta_{j_0}}^{-1}(\alpha))=1\text{ for $\alpha\in\{\xi_j': j\leq n_1\}\cup \{\gamma'\}$ and }$$
    $$q_1(m)=0\text{ for any other }m\in dom(q_1)\setminus dom(q_0).$$ 
    
     Now it follows from (6.2) and our choice of $n_1$, $\xi'_j$'s,  $\gamma'$ and $q_1$ that
      \begin{enumerate}[(7.1)]
      \item $C^{q_1}_{\beta_i}=\emptyset$ for $i\in k\setminus \{j_0\}$, $C^{q_1}_{\beta_{j_0}}=\{\xi_j': j\leq n_1\}\cup \{\gamma'\}$ and $C^{q_1}_{\gamma'}=\emptyset$.
      \end{enumerate} 
      Together with (6.1) and (6.2), we get 
      \begin{enumerate}[(7.1)]\setcounter{enumi}{1}
     \item  for any $i\in k\setminus \{j_0\}$, $Tr^{q_1}(\delta, a(i))=Tr^{q_0}(\delta, a(i))$ and $L^{q_1}(\delta,a(i))=L^{q_0}(\delta,a(i))$;
     \item $Tr^{q_1}(\delta, a(j_0))=Tr^{q_0}(\delta, a(j_0))\cup \{\gamma'\}$ and $L^{q_1}(\delta,a(j_0))=L^{q_0}(\delta,a(j_0))\cup \{\xi \}$;

     \item $\xi \in Tr^{q_1}(\xi , a(j_0))$ and $\varrho_{1a(j_0)}^{q_1}(\xi )=n_1$.
       \end{enumerate} 
      \medskip
     
          Finally, we proceed toward the satisfaction of (1)-(5).  Choose $n_2>dom(q_1)$ such that 
    $$ \{A_\alpha\setminus n_2: \alpha \in Tr^{q_1}(\delta,a(i))\text{ for some }i<k\} \text{ are pairwise disjoint}.$$
          Choose $\tau_0<...<\tau_{n_2-1}$ in $(\xi+\omega,\delta)$, \\
     $\tau'_0<\tau'_1<...<\tau'_{n_1}<\xi<\tau'_{n_1+1}<\xi+\omega<\tau'_{n_1+2}<...<\tau'_{n_2-1}=\tau_{n_2-1}$ below $\delta$ and
     $\tau''_0<\tau''_1<...<\tau''_{n_1-1}<\xi<\tau''_{n_1}<\xi+\omega<\tau''_{n_1+1}<...<\tau''_{n_2-1}=\tau_{n_2-1}$ below $\delta$
     such that
     $$n_2<\pi_{\gamma'}^{-1}(\tau_0)<...<\pi_{\gamma'}^{-1}(\tau_{n_2-1}), \ n_2 <\pi_{\beta_{i_0}}^{-1}(\tau'_0)<...<\pi_{\beta_{i_0}}^{-1}(\tau'_{n_2-1})\text{ and}$$
      $$n_2<\pi_{\beta_{i}}^{-1}(\tau''_0)<...<\pi_{\beta_{i}}^{-1}(\tau''_{n_2-1})\text{ for }i\in k\setminus\{ i_0,j_0\}.$$

          For convenience, denote $\beta^*_i=\beta_i$ if $i\neq j_0$ and $\beta^*_{j_0}=\gamma'$.
          
          Then by $(ii)$, find pairwise distinct elements $\langle \gamma_i\in \Lambda\cap (b(0),b(1)): i<k\rangle$ such that 
          $$ \pi^{-1}_{\beta^*_i}(\gamma_i)< \min A_{\gamma_i}\text{ for $i< k$ and}$$
    $$\pi^{-1}_{\beta^*_i}(\gamma_i)>\max(\{\pi_{\beta^*_{j}}^{-1}(\tau_{n_2-1}): j< k\}\cup \{  \pi^{-1}_{\beta^*_j}(\gamma_j): j<i\}\cup \bigcup\{A_{\beta^*_i}\cap A_{\gamma_j}: j<i\}).$$

     Now extend $q_1$ to $q$ such that 
     $$q(\pi^{-1}_{\gamma'}(\gamma_{j_0}))=q(\pi^{-1}_{\gamma'}(\tau_m))=1\text{ for }m<n_2;$$ 
     $$q(\pi^{-1}_{\beta_{i_0}}(\gamma_{i_0}))=q(\pi^{-1}_{\beta_{i_0}}(\tau'_m))=1\text{ for }m<n_2;$$ 
      $$q(\pi^{-1}_{\beta_{i}}(\gamma_{i}))=q(\pi^{-1}_{\beta_{i}}(\tau''_m))=1\text{ for }m<n_2, i\in k\setminus\{i_0, j_0\}\text{ and}$$ 
     $$q(m)=0\text{ for the rest }m\in dom(q)\setminus dom(q_1).$$
     
         Now we check that
      \begin{enumerate}[(8.1)]
      \item $C^{q}_{\beta_i}=\{\tau_m'': m<n_2\}\cup \{\gamma_i\}$ for $i\in k\setminus \{i_0,j_0\}$, $C^{q}_{\beta_{i_0}}=\{\tau_m': m<n_2\}\cup \{\gamma_{i_0}\}$ and $C^{q}_{\gamma'}=\{\tau_m: m< n_2\}\cup \{\gamma_{j_0}\}$, $C^{q}_{\gamma_i}=\emptyset$ for $i<k$.
      \end{enumerate} 
      $C^{q}_{\beta_i^*}$ follows from our choice of $n_2$ and $q$. The argument for $C^{q}_{\gamma_i}=\emptyset$ is the same as the argument for (6.2).
      Then by (6.3), (7.2), (7.3), (8.1) and the fact that $\tau_{n_2-1}=\tau'_{n_2-1}=\tau''_{n_2-1}$, for any $i< k $, 
      \begin{enumerate}[(8.1)]\setcounter{enumi}{1}
     \item  $Tr^{q}(\delta, a(i))=Tr^{q_1}(\delta, a(i))\cup \{\gamma_i\}$ and $L^{q}(\delta,a(i))=L^{q_1}(\delta,a(i))\cup\{\tau_{n_2-1}\}$;
      
     \item $\tau_{n_2-1} \in Tr^{q}(\tau_{n_2-1} , a(i))$.
       \end{enumerate} 
       Also, by our choice of $\tau_m$'s, $\tau'_m$'s and $\tau''_m$'s,
         \begin{enumerate}[(8.1)]\setcounter{enumi}{3}
         \item $\varrho^{q}_{1a(i)}(\tau_{n_2-1})=n_2-1$ for $i<k$;
         \item $\varrho^{q}_{1a(i_0)}(\xi)=n_1+1$, $\varrho^{q}_{1a(i)}(\xi)=n_1$ for $i\neq i_0$.
         \end{enumerate} 
         To see, e.g., $\varrho^{q}_{1a(i_0)}(\xi)=n_1+1$, note $Tr^{q}(\xi, a(i_0))=Tr^{q_1}(\delta, a(i_0))\cup [\xi,\tau'_{n_1+1}]$. So $$\varrho^{q}_{1a(i_0)}(\xi)=\max\{|C^{q}_\eta\cap \xi |: \eta\in Tr^{q}(\xi,a(i_0))\setminus\{\xi\}\}=|C^{q}_{\beta_{i_0}}\cap \xi|=n_1+1.$$
      \medskip
     
       We first summarize several conditions from (6.1) to (8.5). By (6.3), (7.2) and (8.2), for $i\in k\setminus \{j_0\}$;
     \begin{enumerate}[(9.1)]
     \item $Tr^{q}(\delta, a(i))=Tr^{p'}(\delta, a(i))\cup \{\beta_i, \gamma_i\}$;
     \item $L^{q}(\delta, a(i))=L^{p'}(\delta, a(i))\cup \{\tau_{n_2-1}\}$.
     \end{enumerate} 
     By (6.3), (7.3) and (8.2),
     \begin{enumerate}[(9.1)]\setcounter{enumi}{2}
     \item $Tr^{q}(\delta, a(j_0))=Tr^{p'}(\delta, a(j_0))\cup \{\beta_{j_0}, \gamma', \gamma_{j_0}\}$;
    \item $L^{q}(\delta, a(j_0))=L^{p'}(\delta, a(j_0))\cup \{\xi, \tau_{n_2-1}\}$.
     \end{enumerate} 
     
    Then  we check that (1)-(5) are satisfied.  
     
     (1). Fix $\nu\in L^{q}(\delta, a(j))$. If $\nu\in L^{p'}(\delta, a(j))$, then by (I), $q$ forces that $Tr^q(\nu, a(i))=Tr(\nu, a(i))=Tr^{p'}(\nu, a(i))$. Hence $\nu\in Tr^q(\nu, a(i))$.
     
     If $\nu=\xi$, then $\nu\in Tr^q(\nu, a(j_0))$ by (7.4) and (I) and  $\nu\in Tr^q(\nu, a(i))$ for $i\in k\setminus \{j_0\}$,  by (9.1), (8.1)  and our choice of $\beta_i, \tau'_{n_1+1}, \tau''_{n_1}$. The case for $\nu=\tau_{n_2-1}$ is similar.
     
     (2) follows from (9.2), (9.4) and (8.4). 
     
     (3) follows from (9.2) and (9.4). 
     
     (4). Note by (8.2), for each $i<k$, $\gamma_i=\min Tr^q(\delta, a(i))$. Then (4)   follows from  our choice of $\gamma_i$. 
     
     (5). First note by (1) and (I), $\varrho^q_{1a(i)}$ agrees with $\varrho^{p'}_{1a(i)}$ on $L^{p'}(\delta, a(j))$. For $j\neq j_0$, (5) follows from (9.2) and (8.4). 
     
     For $j=j_0$ and $i\neq i_0$, (5) follows from (9.4), (7.4) and (8.5). Just note that by (I) and (1), $q$ forces $\varrho_{1a(j_0)}^{q}(\xi )=\varrho_{1a(j_0)} (\xi )=\varrho_{1a(j_0)}^{q_1}(\xi )$.
     
     For $(i,j)=(i_0, j_0)$, (5) follows from (2) for $p'$, (7.4), (8.5) and (9.4).     
     This finishes the proof of (5) and hence the claim.
  \end{proof}

Now fix $\delta\in C\setminus (\max F+1)$. Recall that by  (c), $\min Tr^p(\max F +1,\alpha)$'s are pairwise distinct. Choose $a\in [X']^k$ such that  for each $i<k$,
$$(\delta, \min Tr^{p}(\max F +1, a(i)))\cap \bigcap_{n<\omega} acc(\Lambda_n)\text{ is infinite}.$$
In particular,  for each $i<k$, $\min Tr^{p}(\max F +1, a(i))>\delta$ and hence $\min Tr^{p}(\max F +1, a(i))=\min Tr^{p}(\delta, a(i))$. Moreover, together with (b), $L^p(\delta, a(i))=L^p(\max F+1, a(i))=L$.
 Then (1)-(2) in the Claim are satisfied for $p, \delta$ and $ a$. To see e.g., (1), first note by $(viii)$,  $\nu\in Tr^{p}(\nu, a(i))$ for any  $\nu\in L^{p}(\delta, a(i))$. Then (1) follows from $L^p(\delta, a(i))=L^p(\delta, a(j))=L$.

Then use the Claim $\sum_{i,j<k} c_{ij}$ times we get a $q\leq p$ such that 
\begin{enumerate}\setcounter{enumi}{9}
\item $|Osc(\varrho^q_{1a(i)},\varrho^q_{1a(j)};L^q(\delta,a(j)))|=c_{ij}$;
\item $\varrho^{q}_{1a(i)}(\max L^{q}(\delta,a(j)))=\varrho^{q}_{1a(j)}(\max L^{q}(\delta,a(j)))$;
\item $C^q_{\min Tr^q(\delta,a(i))}=\emptyset$ (by (8.1)). 
\end{enumerate}

Now   pick some sufficiently large $n$ such that 
\begin{itemize}
\item $\pi^{-1}_{\min Tr^q(\delta,a(i))}(\delta+n)>dom (q)$;
\item  $\pi^{-1}_{\min Tr^q(\delta,a(i))}(\delta+n)\notin A_{\min Tr^q(\delta,a(j))}$ for $i\neq j$.
\end{itemize}
 Extend $q$ to $q'$ such that $q'(\pi_{\min Tr^q(\delta,a(i))}(\delta+n))=1$ for $i<k$ and $q'(m)=0$ for any other $m\in dom(q')\setminus dom(q)$.  It is straightforward to check that for any $i<k$,
 \begin{enumerate}\setcounter{enumi}{12}
 \item $C^{q'}_{\min Tr^q(\delta,a(i))}=C^{q}_{\min Tr^q(\delta,a(i))}\cup \{\delta+n\}$.
 \end{enumerate}
 Consequently, for any $i<k$,
  \begin{enumerate}\setcounter{enumi}{13}
 \item $Tr^{q'}(\delta,a(i))=Tr^q(\delta,a(i))\cup [\delta, \delta+n]$ and in particular $\delta\in Tr^{q'}(\delta,a(i))$;
 \item $L^{q'}(\delta,a(i))=L^q(\delta,a(i))$;
 \item $\varrho^{q'}_{1a(i)}\up_{L^{q'}(\delta,a(j))}=\varrho^{q}_{1a(i)}\up_{L^{q}(\delta,a(j))}$.
 \end{enumerate}

 By (I), this $q'$ will force ``$a,\delta$ satisfy $(\star)$''. This finishes the proof of the theorem.
 \end{proof}

Now we turn to the negative direction and show that it is consistent to have no $C$-sequence witnessing $(**)$.

\begin{thm}\label{t11}
  It is consistent that for any $C$-sequence, there is $X\in[\omega_1]^{\omega_1}$ such that for any uncountable family $\mathscr{A}\subset [X]^2$ of pairwise disjoint sets, there are $a<b$ in $\mathscr{A}$ and $i\neq j$ in $\{0,1\}$ such that $osc(a(i),b(j))\neq osc(a(i),b(i))+1$. In particular, $(**)$ fails.
\end{thm}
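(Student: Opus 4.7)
The plan is to force over a ground model of CH to obtain a model in which, for every $C$-sequence, a set $X$ with the stated property exists. Concretely, I would iterate, with countable support and of length $\omega_2$, a proper forcing $P_{\vec{C}}$ associated to each $C$-sequence $\vec{C}$ appearing in an intermediate model, using a standard bookkeeping enumeration so that every $C$-sequence of the final model is handled. Note that the stated failure of $(\ast\ast)$ corresponds to the specific $2\times 2$ matrix $c_{00}=c_{11}=0,\ c_{01}=c_{10}=1$, so it suffices to defeat just this matrix for each $\vec{C}$.

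For a fixed $\vec{C}$, the forcing $P_{\vec{C}}$ adds a generic uncountable $X\subset\omega_1$. Conditions would be pairs $(s,\mathcal{N})$, where $s$ is a finite partial characteristic function of $X$ and $\mathcal{N}$ is a finite $\in$-chain of countable elementary submodels of $H(\aleph_2)$ acting as side conditions in the style of Todorcevic. The order is natural extension of $s$ and inclusion of $\mathcal{N}$. The usual side-conditions argument yields properness of $P_{\vec{C}}$, and standard density arguments show that the generic $X=\{\alpha:\exists p\in G,\ s_p(\alpha)=1\}$ has cardinality $\omega_1$.

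The heart of the argument is the density lemma: no condition $p\in P_{\vec{C}}$ can force a $P_{\vec{C}}$-name $\dot{\mathscr{A}}$ to be an uncountable pairwise disjoint family $\subset[\dot X]^2$ with $osc(a(i),b(j))=osc(a(i),b(i))+1$ for all $a<b$ in $\dot{\mathscr{A}}$ and $i\ne j$. Assuming for contradiction that some such $p$ and $\dot{\mathscr{A}}$ exist, I would use elementarity to obtain uncountably many extensions of $p$, each deciding a candidate pair $a_\xi\in[\dot X]^2$ for $\dot{\mathscr{A}}$. A $\Delta$-system and pigeonhole argument on the finite walk-data $\langle L(a_\xi(0),a_\xi(1)),\varrho_{1 a_\xi(i)}\!\upharpoonright L:\,i<2\rangle$, combined with Fact~\ref{f1} and the observation following Corollary~\ref{cor1} (walks agree on initial parts, with all oscillation confined to a tail of $L$), produces two such extensions whose selected candidate pairs $a,b$ can be made compatible but whose $L$-traces $L(a(0),b(1))$ and $L(a(0),b(0))$ force $osc(a(0),b(1))\ne osc(a(0),b(0))+1$; this contradicts the assumption on $p$. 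The spirit is dual to Lemma~\ref{lempw}: instead of constructing a family with a prescribed $osc$-increment, one exploits the rigidity of the prescribed increment $1$ to exhibit a mismatch generically.

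The main obstacle is this density lemma, which is the combinatorial core of the theorem: the side conditions $\mathcal{N}$ must be rich enough that the pigeonhole works uniformly over all countable models (ensuring that the bad pair is produced generically), yet not so constraining that the forcing either collapses $\omega_1$ or fails to add an uncountable $X$. A secondary difficulty is the iteration: one must verify that the countable-support iteration preserves $\omega_1$ and that the bookkeeping catches all $C$-sequences of the final model, which is standard once properness of each iterand is established.
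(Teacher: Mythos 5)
Your proposal correctly identifies the overall architecture (a countable-support iteration of proper posets, one per $C$-sequence, with bookkeeping) and correctly isolates the $2\times 2$ matrix $c_{00}=c_{11}=0$, $c_{01}=c_{10}=1$ as the target pattern. However, the poset $P_{\vec C}$ you propose is not the one the paper uses, and I do not see how your version can succeed. Your conditions $(s,\mathcal N)$ only build a generic $X$ with Todorcevic-style side conditions for properness; nothing in the definition of a condition actively obstructs the pattern. The paper's conditions are pairs $(c,F)$ with \emph{no} elementary-submodel side conditions, where $c$ is a finite approximation to a club $C$ and $F$ a finite approximation to $X$, subject to the explicit requirement that for each $\eta\in c$ and each $\alpha,\beta\in F\setminus\eta$ one of $|Osc(\varrho_{1\alpha},\varrho_{1\beta};L(\eta,\beta))|$ or $|Osc(\varrho_{1\beta},\varrho_{1\alpha};L(\eta,\alpha))|$ differs from $1$. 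The whole proof hinges on this: the generic adds both $C$ and $X$, and the oscillation failure is built in condition-by-condition, then converted into the global statement about uncountable $\mathscr{A}\subset[X]^2$ by Lemma~\ref{lem30}, which shows that the existence of an unbounded set of $\delta$ witnessing the local failure is equivalent to the failure of the pattern for every uncountable pairwise disjoint family in $[X]^2$.

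The gap in your sketch is concentrated in the ``density lemma.'' You would need to show that no condition forces a name $\dot{\mathscr{A}}$ for an uncountable homogeneous family inside $\dot X$; but the values $osc(a(i),b(j))$ are entirely determined by the ground-model $C$-sequence, not by $X$, and the ground model may well (consistently, by Theorem~\ref{**consistency}) satisfy $(\ast\ast)$, i.e.\ every uncountable $Y$ contains an uncountable $\mathscr{A}\subset[Y]^2$ realizing the pattern. So a $\Delta$-system/pigeonhole argument cannot by itself manufacture the required bad pair $a,b$; the combinatorics may simply refuse to produce one. The paper avoids this trap precisely because the $(c,F)$ condition forbids the pattern pointwise at each $\delta\in C$, and properness is then proved via a substantial oscillation-injection lemma (Lemma~\ref{l14}, resting on Lemma~\ref{lem28}), not via side conditions. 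You would need to redesign $P_{\vec C}$ so that the conditions themselves carry the club-with-failure information, and you would need an analogue of Lemma~\ref{lem30} to translate that into the stated conclusion; without those two ingredients the argument does not go through.
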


For any fixed $C$-sequence, we first define a forcing witnessing Theorem \ref{t11} for this sequence.

\begin{lem}\label{l15}
For any $C$-sequence, there is a proper forcing $\mathcal{P}$ of size $\omega_1$ such that $V^{\mathcal{P}}\models$ there is $X\in[\omega_1]^{\omega_1}$ such that for any uncountable family $\mathscr{A}\subset [X]^2$ of pairwise disjoint sets, there are $a<b$ in $\mathscr{A}$ and $i\neq j$ in $\{0,1\}$ such that $osc(a(i),b(j))\neq osc(a(i),b(i))+1$.
\end{lem}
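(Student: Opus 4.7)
My plan is to define $\mathcal{P}$ as a proper forcing of size $\omega_1$ using finite approximations together with countable elementary submodel side conditions, in the style of Todorcevic. I may assume without loss of generality that $(\ast\ast)$ already holds in $V$ for the fixed $C$-sequence, since otherwise $\mathcal{P}$ may be taken trivially. Conditions are pairs $p = (x_p, \mathcal{N}_p)$, where $x_p \in [\omega_1]^{<\omega}$ is a finite approximation to the desired set $X$, and $\mathcal{N}_p$ is a finite $\in$-chain of countable elementary submodels of $H(\aleph_2)$ each containing the fixed $C$-sequence and $\mathcal{P}$ itself. The ordering is the standard side-condition one: $q \leq p$ iff $x_p \subseteq x_q$, $\mathcal{N}_p \subseteq \mathcal{N}_q$, and $x_q \cap M = x_p \cap M$ for every $M \in \mathcal{N}_p$. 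The cardinality bound is immediate, and properness follows from the usual argument that if $p \in M$ with $M \in \mathcal{N}_p$, then any extension of $p$ is $(M, \mathcal{P})$-generic, since the side condition guarantees that every dense open $D \in M$ is met by a sub-extension inside $M$.

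The density of $\{p : x_p \not\subseteq \alpha\}$ for each $\alpha < \omega_1$ (extend $p$ by inserting any ordinal above $\alpha$ and outside $\bigcup \mathcal{N}_p$) shows that $X = \bigcup\{x_p : p \in G\}$ is uncountable.

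The central step is to verify that $X$ has the stated property. Fix a $\mathcal{P}$-name $\dot{\mathscr{A}}$ and a condition $p_0$ forcing $\dot{\mathscr{A}}$ to be an uncountable pairwise disjoint family in $[\dot X]^2$; assume toward a contradiction that $p_0$ also forces $osc(a(i), b(j)) = osc(a(i), b(i)) + 1$ for every $a < b$ in $\dot{\mathscr{A}}$ and every $i \neq j$. For each $\alpha < \omega_1$, extend $p_0$ to some $p_\alpha$ deciding $\dot{a}_\alpha = a_\alpha \in \dot{\mathscr{A}}$ with $a_\alpha \subseteq x_{p_\alpha}$ and $\min a_\alpha > \alpha$. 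A $\Delta$-system argument applied to $\{(x_{p_\alpha}, \mathcal{N}_{p_\alpha}, a_\alpha) : \alpha < \omega_1\}$ produces an uncountable $\Gamma$ such that these triples form a coherent $\Delta$-system with common root $(r, \mathcal{N}_r)$ and $\{a_\alpha : \alpha \in \Gamma\}$ is pairwise disjoint; applying Corollary \ref{cor1} to this family further refines $\Gamma$ to an uncountable $\Gamma'$ on which the osc-matrix $\langle c_{ij} : i,j < 2 \rangle$ stabilizes.

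The main obstacle lies in the case where this stabilized matrix is exactly $\begin{smallmatrix} 0 & 1 \\ 1 & 0\end{smallmatrix}$ (otherwise amalgamating any two compatible $p_\alpha, p_\beta$ with $\alpha < \beta$ in $\Gamma'$ immediately forces an off-pattern pair into $\dot{\mathscr{A}}$). To escape this case one would exploit the fact that $\dot{\mathscr{A}}$ is forced uncountable, so each $p_\alpha$ admits extensions deciding alternate elements of $\dot{\mathscr{A}}$; working inside a countable elementary $M \prec H(\aleph_2)$ containing $p_0$ and $\dot{\mathscr{A}}$, one diagonally replaces the $p_\alpha$'s with extensions that select different $a_\alpha$'s, using the freedom to vary $x_{p_\alpha}$ outside $M$. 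The aim is to force the resulting stabilized matrix to differ from $\begin{smallmatrix} 0 & 1 \\ 1 & 0\end{smallmatrix}$; the key combinatorial input is Lemma \ref{lempw}'s characterization of the matrix entries as $\varrho_1$-comparisons at $\max L(\delta, b(j))$, which permits flipping a chosen entry by positioning new elements of $x_q$ so that the corresponding $\varrho_1$-inequality reverses. Closing out this flipping argument is the delicate part of the proof, and it is where Todorcevic's minimal walk machinery is used most intensively; once it is carried out, one obtains the desired extension $q \leq p_0$ forcing an off-pattern pair into $\dot{\mathscr{A}}$, contradicting the assumption on $p_0$.
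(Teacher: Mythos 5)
Your forcing poset, as written, places no constraint at all on $x_p$: the finite approximations are arbitrary finite subsets of $\omega_1$, and the ordering only imposes the standard side-condition requirement $x_q \cap M = x_p \cap M$. Consequently $X$ is just a generic unbounded subset of $\omega_1$ with no oscillation content built in, and the approach is unsalvageable. Indeed, under your WLOG assumption, fix a ground-model uncountable pairwise disjoint $\mathscr{A}_0 \subset [\omega_1]^2$ realizing the forbidden pattern $c_{01} = c_{10} = 1$ (such a family exists by Theorem \ref{pw1} / the $(\ast\ast)$ hypothesis). For every $\alpha < \omega_1$ and every condition $p$, one can extend $p$ to $q=(x_p\cup a, \mathcal{N}_p)$ for some $a \in \mathscr{A}_0$ with $\min a > \max(\{\alpha\}\cup\{M\cap\omega_1 : M\in\mathcal{N}_p\})$; this is a valid extension because nothing in the definition of the poset forbids it and $a$ is disjoint from every model in $\mathcal{N}_p$. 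So it is dense to pack members of $\mathscr{A}_0$ into $X$, and generically $\mathscr{A}_0 \cap [X]^2$ is uncountable --- an uncountable pairwise disjoint subfamily of $[X]^2$ exhibiting exactly the pattern you meant to forbid. The ``flipping argument'' you defer is therefore not merely delicate: it cannot exist, because the adversary is free to name $\dot{\mathscr{A}} = \mathscr{A}_0 \cap [\dot{X}]^2$, at which point every decidable element of $\dot{\mathscr{A}}$ already carries the bad pattern, and deciding alternate elements or varying $x_{p_\alpha}$ outside a model $M$ gives you no leverage over the ground-model oscillation values.

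The missing idea is that the forcing conditions must themselves carry an oscillation constraint, together with a reduction showing that such a constraint suffices. The paper's poset consists of pairs $(c, F)$ of finite sets of ordinals, where $F$ approximates $X$ and $c$ is a finite set of checkpoint ordinals, and a condition demands that for every $\eta \in c$ and every pair $\alpha, \beta \in F \setminus \eta$, not both of $|Osc(\varrho_{1\alpha},\varrho_{1\beta}; L(\eta,\beta))|$ and $|Osc(\varrho_{1\beta},\varrho_{1\alpha}; L(\eta,\alpha))|$ equal $1$. Properness for this poset is the genuinely hard step, needing Lemma \ref{l14} (built on Lemma \ref{lem28}) to push oscillations past $1$ when amalgamating across a model, plus a Fact \ref{f1}--based case analysis matching a generic condition's trace data; and the step from the per-checkpoint property secured by the generic to the desired statement about all uncountable $\mathscr{A}\subset[X]^2$ requires the equivalence in Lemma \ref{lem30}. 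Your proposal omits both ingredients; Corollary \ref{cor1} and Lemma \ref{lempw} alone, applied on the name side, do not substitute for them.
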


Below we introduce some necessary lemmas. The following one follows directly from the proof of Lemma 8 in \cite{pw}.

\begin{lem}[\cite{pw}]\label{lem7}
  Assume that $\mathscr{A}\subset[\omega_1]^k$ is uncountable pairwise disjoint and $\langle c_{ij}: i,j<k\rangle\in \omega^{k\times k}$ such that for any $a,b\in \mathscr{A}$, if $a<b$, then $osc(a(i),b(j))=osc(a(i),b(i))+c_{ij}$ for $i,j<k$. Then for any stationary $S\subset\omega_1$, there are $\delta\in S$ and $a\in \mathscr{A}\cap [\omega_1\setminus \delta]^k$ such that $|Osc(\varrho_{1a(i)},\varrho_{1a(j)};L(\delta,a(j)))|=c_{ij}$ for $i,j<k$.
\end{lem}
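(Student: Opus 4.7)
The proof proceeds by a reflection argument that parallels the proof of \cite[Lemma 8]{pw} (cited above as Lemma \ref{lempw}). Fix a continuous $\in$-chain $\langle M_\xi:\xi<\omega_1\rangle$ of countable elementary submodels of $H(\aleph_2)$ containing $\mathscr{A}$, $\langle c_{ij}\rangle$, and $S$. Since $\{M_\xi\cap\omega_1:\xi<\omega_1\}$ is a club and $S$ is stationary, pick $M$ with $\delta:=M\cap\omega_1\in S$. As $\mathscr{A}\in M$ is uncountable, choose $a\in\mathscr{A}$ with $\min a>\delta$. The goal is to verify $|Osc(\varrho_{1a(i)},\varrho_{1a(j)};L(\delta,a(j)))|=c_{ij}$ for all $i,j<k$ by computing this quantity as $osc(b(i),a(j))-osc(b(i),a(i))$ for a judicious bridge $b\in\mathscr{A}\cap M$ below $a$, which equals $c_{ij}$ by hypothesis.

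The bridge $b$ is picked so that the walks align through $\delta$. By Fact \ref{f1}(1), $\min L(b(i),\delta)\to\delta$ as $b(i)\nearrow\delta$, and by coherence of $\varrho_1$ (Fact \ref{f0}), the finitely many discrepancy points where $\varrho_{1b(i)}\neq\varrho_{1a(i)}$ or $\varrho_{1a(i)}\neq\varrho_{1a(j)}$ (on their common domains) can be pushed strictly below $\min L(b(i),\delta)$. Since $L(\delta,a(i))$ and $L(\delta,a(j))$ are fixed finite subsets of $\delta$, elementarity (applied to the uncountably many candidates $b$ in $M\cap\mathscr{A}$) lets me select $b\in\mathscr{A}\cap M$ with $\max b$ close enough to $\delta$ that $\min L(b(i),\delta)$ strictly exceeds $\max L(\delta,a(i))$, $\max L(\delta,a(j))$, and all the relevant discrepancy points, for every $i,j$. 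Fact \ref{f1}(2), applied with $\alpha=b(i)$, $\beta=\delta$ and $\gamma=a(j)$ (resp.\ $\gamma=a(i)$), then yields the ordered disjoint decompositions $L(b(i),a(j))=L(b(i),\delta)\cup L(\delta,a(j))$ and $L(b(i),a(i))=L(b(i),\delta)\cup L(\delta,a(i))$.

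Splitting $Osc(\varrho_{1b(i)},\varrho_{1a(j)};L(b(i),a(j)))$ along this decomposition, the lower piece contributes exactly $|Osc(\varrho_{1a(i)},\varrho_{1a(j)};L(\delta,a(j)))|$ after substituting $\varrho_{1b(i)}$ by $\varrho_{1a(i)}$ via coherence, while the upper piece on $L(b(i),\delta)$ contributes a value that, since $\varrho_{1a(j)}=\varrho_{1a(i)}$ on $L(b(i),\delta)$ above the exceptional set, equals the corresponding upper-piece contribution from $osc(b(i),a(i))$. The lower piece of $osc(b(i),a(i))$ is $|Osc(\varrho_{1a(i)},\varrho_{1a(i)};L(\delta,a(i)))|=0$. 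After checking that the oscillation at the junction $\min L(b(i),\delta)$ is identically present (or identically absent) in both decompositions, it cancels in the subtraction and one obtains
\[
osc(b(i),a(j))-osc(b(i),a(i))=|Osc(\varrho_{1a(i)},\varrho_{1a(j)};L(\delta,a(j)))|,
\]
so the hypothesis $osc(b(i),a(j))-osc(b(i),a(i))=c_{ij}$ delivers the conclusion.

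The main obstacle is the junction bookkeeping: because the predecessors $\max L(\delta,a(j))$ and $\max L(\delta,a(i))$ are generally different points of $\delta$, one must verify that the junction-oscillation indicators for the two computations agree. The key observation is that by coherence the values of $\varrho_{1a(i)},\varrho_{1a(j)},\varrho_{1b(i)}$ at $\min L(b(i),\delta)$ are pinned down identically, so the junction is always absent when the "$>$" half fails, while on the other side the matching is forced by a $\varrho_1$-comparison at $\max L(\delta,a(j))$ that can be arranged by passing to an uncountable sub-family of $\mathscr{A}$ with a uniform such comparison (using pigeonhole and elementarity); this is the step directly imported from the proof of \cite[Lemma 8]{pw}.
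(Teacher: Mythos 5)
Your reflection strategy---fix $\delta=M\cap\omega_1\in S$ for a suitable countable $M\prec H(\aleph_2)$, pick $a\in\mathscr{A}$ above $\delta$, and read off $c_{ij}$ as $osc(b(i),a(j))-osc(b(i),a(i))$ for a bridge $b\in\mathscr{A}\cap M$---is in the right ballpark, but the choice of $b$ and the junction bookkeeping contain real gaps. The crucial substitution of $\varrho_{1b(i)}$ by $\varrho_{1a(i)}$ on the lower piece $L(\delta,a(j))$ requires these two functions to agree on that fixed finite set. You try to arrange this by ``pushing the discrepancy points below $\min L(b(i),\delta)$,'' but that is the wrong threshold: once $L(b(i),a(j))=L(b(i),\delta)\cup L(\delta,a(j))$, the set $L(\delta,a(j))$ lies entirely \emph{below} $\min L(b(i),\delta)$, so keeping the discrepancies below $\min L(b(i),\delta)$ gives no control at all on whether they hit $L(\delta,a(j))$. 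Moreover, the discrepancy set $\{\xi:\varrho_1(\xi,b(i))\neq\varrho_1(\xi,a(i))\}$ depends on $b$, and as $b(i)\nearrow\delta$ nothing in Fact~\ref{f0} forces it downward; coherence only says it is finite, not that it avoids the fixed set $\bigcup_{j}L(\delta,a(j))$.

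The junction analysis has the same flavour of problem. You assert that the values of $\varrho_{1a(i)},\varrho_{1a(j)},\varrho_{1b(i)}$ at $\min L(b(i),\delta)$ are ``pinned down identically by coherence.'' For $\varrho_{1a(i)}$ and $\varrho_{1a(j)}$ this is true once $\min L(b(i),\delta)$ is pushed into the region $[\tau,\delta)$ where they agree (using Fact~\ref{f1}(1)), but $\varrho_{1b(i)}$ is a third, independent function, and nothing you have arranged forces $\varrho_{1b(i)}(\min L(b(i),\delta))$ to coincide with the other two. Finally, the remaining mismatch between $\max L(\delta,a(j))$ and $\max L(\delta,a(i))$ in the junction test cannot be cured by ``passing to an uncountable sub-family of $\mathscr{A}$ with a uniform $\varrho_1$-comparison'': the comparison you need is $\varrho_{1a(i)}(\max L(\delta,a(j)))$ against $\varrho_{1a(j)}(\max L(\delta,a(j)))$, in which $\delta$ and $a$ are already frozen (you need $\delta\in S$), and a later refinement of $\mathscr{A}$ cannot alter a statement about the fixed pair $(\delta,a)$.

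A route that sidesteps all of this is indirect and uses Lemma~\ref{lempw} as a black box, with no bridge at all. Fix $M$ with $\delta=M\cap\omega_1\in S$ and $a\in\mathscr{A}$ above $\delta$. Exactly as in the Claim inside the proof of Lemma~\ref{lem30} (whose proof uses only Fact~\ref{f0} and Fact~\ref{f1}(2) and is logically independent of the present lemma), choose $\tau<\delta$ with $\tau>\bigcup_{j}L(\delta,a(j))$ and $\varrho_{1a(i)}\upharpoonright_{[\tau,\delta)}=\varrho_{1a(j)}\upharpoonright_{[\tau,\delta)}$ for all $i,j$, and let $\xi=\min(C_\delta\setminus\tau)$; then every $\eta\in(\xi,\delta)$ satisfies $L(\eta,a(j))=L(\eta,\delta)\cup L(\delta,a(j))$, hence $|Osc(\varrho_{1a(i)},\varrho_{1a(j)};L(\eta,a(j)))|=|Osc(\varrho_{1a(i)},\varrho_{1a(j)};L(\delta,a(j)))|$ and $\varrho_{1a(i)}(\max L(\eta,a(j)))=\varrho_{1a(j)}(\max L(\eta,a(j)))$. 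Pick $\mathcal{N}\in M$ with $\mathcal{N}\prec H(\aleph_2)$, $\mathscr{A}\in\mathcal{N}$ and $\eta:=\mathcal{N}\cap\omega_1\in(\xi,\delta)$, and apply Lemma~\ref{lempw} with $a_0=b_0=a$, this $\mathcal{N}$, and $R$ the symbol $=$. You obtain an uncountable $\mathscr{A}'\subset\mathscr{A}$ with $osc(a'(i),b'(j))=osc(a'(i),b'(0))+d_{ij}$ for all $a'<b'$ in $\mathscr{A}'$, where $d_{ij}=|Osc(\varrho_{1a(i)},\varrho_{1a(j)};L(\eta,a(j)))|-|Osc(\varrho_{1a(i)},\varrho_{1a(0)};L(\eta,a(0)))|$. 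Since $\mathscr{A}'\subset\mathscr{A}$ also realizes the hypothesized pattern, subtracting the $j=i$ instance and using $|Osc(\varrho_{1a(i)},\varrho_{1a(i)};\cdot)|=0$ gives $c_{ij}=d_{ij}-d_{ii}=|Osc(\varrho_{1a(i)},\varrho_{1a(j)};L(\eta,a(j)))|=|Osc(\varrho_{1a(i)},\varrho_{1a(j)};L(\delta,a(j)))|$, which is the conclusion.
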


Before proceeding to the proof, we first reduce the complexity of the conclusion of Lemma \ref{l15} for a given $C$-sequence.

\begin{lem}\label{lem30}
For a given  $X\in [\omega_1]^{\omega_1}$ and $\langle c_{ij}: i,j<k\rangle\in \omega^{k\times k}$ such that $c_{ii}=0$ whenever $i<k$, the following   statements are equivalent.
\begin{enumerate}
\item For any uncountable family $\mathscr{A}\subset [X]^k$ of pairwise disjoint sets, there are $a<b$ in $\mathscr{A}$ and $i, j<k$  such that 
$$osc(a(i),b(j))\neq osc(a(i),b(i))+c_{ij}.$$

\item There is a club $E$ such that for any $\delta\in E$, for any $a\in [X\setminus \delta]^k$, there are $i,j<k$ such that
$$|Osc(\varrho_{1a(i)},\varrho_{1a(j)}; L(\delta, a(j)))|\neq c_{ij}.$$

\item There is an uncountable set $A\subset \omega_1$ such that for any $\delta\in A$, for any $a\in [X\setminus \delta]^k$, there are $i,j<k$ such that
$$|Osc(\varrho_{1a(i)},\varrho_{1a(j)}; L(\delta, a(j)))|\neq c_{ij}.$$
\end{enumerate}
\end{lem}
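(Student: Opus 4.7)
The plan is to establish the equivalence through the cycle $(2)\Rightarrow(3)\Rightarrow(1)\Rightarrow(2)$. The first implication is immediate since every club is uncountable, so the club $E$ from (2) already serves as the uncountable set $A$ demanded in (3).

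For $(3)\Rightarrow(1)$, I argue by contrapositive. Suppose (1) fails and fix an uncountable pairwise disjoint family $\mathscr{A}\subset [X]^k$ with $osc(a(i),b(j))=osc(a(i),b(i))+c_{ij}$ for every $a<b$ in $\mathscr{A}$ and all $i,j<k$. I apply Lemma \ref{lem7}. A close reading of its proof (which is the proof of \cite[Lemma 8]{pw}) shows that ``stationary $S$'' can be weakened to ``uncountable $S$'' provided $\delta\in S$ is chosen above the countably many elements of $\mathscr{A}$ lying in an initial segment: the argument only needs the existence of some $b\in\mathscr{A}$ below $\delta$ together with some $a\in\mathscr{A}$ above $\delta$, combined with the decomposition of $L$-sets supplied by Fact \ref{f1}(2). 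Applying this enhanced form with $S=A$ (restricted to its cofinal uncountable tail) yields $\delta\in A$ and $a\in\mathscr{A}\cap [X\setminus\delta]^k$ with $|Osc(\varrho_{1a(i)},\varrho_{1a(j)};L(\delta,a(j)))|=c_{ij}$ for all $i,j$, contradicting the defining property of $A$ in (3).

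For $(1)\Rightarrow(2)$, again by contrapositive. Assume (2) fails; then the set
\[T=\{\delta<\omega_1:\exists\,a\in [X\setminus\delta]^k,\ \forall i,j<k,\ |Osc(\varrho_{1a(i)},\varrho_{1a(j)};L(\delta,a(j)))|=c_{ij}\}\]
meets every club and is therefore stationary. For each $\delta\in T$ fix a witness $a_\delta\in [X\setminus\delta]^k$, and let $C=\{\delta'<\omega_1:\forall\,\delta\in T\cap\delta',\ \max a_\delta<\delta'\}$, which is a club of closure points. On the stationary set $T'=T\cap C$ the family $\mathscr{A}_0=\{a_\delta:\delta\in T'\}\subset [X]^k$ is uncountable and pairwise disjoint. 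Take a countable elementary submodel $M\prec H(\aleph_2)$ containing $\mathscr{A}_0,T',X,\langle c_{ij}\rangle$ with $\delta_0:=M\cap\omega_1\in T'$. Applying Lemma \ref{lempw} to $(\mathscr{A}_0,\mathscr{A}_0)$ with $a_0=b_0=a_{\delta_0}$ produces an uncountable pairwise disjoint $\mathscr{A}'\subset\mathscr{A}_0$ satisfying $osc(a(i),b(j))=osc(a(i),b(0))+(c_{ij}-c_{i0})$ for all $a<b$ in $\mathscr{A}'$. Subtracting the $j=i$ instance and using $c_{ii}=0$ reduces this to $osc(a(i),b(j))=osc(a(i),b(i))+c_{ij}$, contradicting (1).

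The main obstacle is verifying the uniform sign relation $\varrho_{1a_{\delta_0}(i)}(\max L(\delta_0,a_{\delta_0}(j)))\,R\,\varrho_{1a_{\delta_0}(j)}(\max L(\delta_0,a_{\delta_0}(j)))$ for all $i,j<k$ with a single $R\in\{=,>\}$ demanded by Lemma \ref{lempw}. The diagonal case $i=j$ is automatically equality, but off-diagonal entries can a priori realize any of $\{<,=,>\}$. I resolve this by partitioning $T'$ according to the finitely many possible sign patterns of the off-diagonal entries, invoking pigeonhole to extract a stationary subset on which the pattern is uniform, and re-choosing $\delta_0$ from this refined stationary subset so that the hypothesis of Lemma \ref{lempw} is satisfied with the appropriate uniform $R$.
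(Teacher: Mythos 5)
Your cycle $(2)\Rightarrow(3)\Rightarrow(1)\Rightarrow(2)$ is a sensible organization, and $(2)\Rightarrow(3)$ is of course immediate. However, both remaining implications rely on claims that don't hold up, and the missing ingredient in both cases is the same: a ``tail'' lemma showing that if the oscillation pattern $\langle c_{ij}\rangle$ is realized at $\delta$ by some $a$, then by coherence of $\varrho_1$ and Fact~\ref{f1} it is realized at a whole end segment $(\xi,\delta)$ by the \emph{same} $a$, and moreover with the additional equality $\varrho_{1a(i)}(\max L(\eta,a(j)))=\varrho_{1a(j)}(\max L(\eta,a(j)))$ for all $i,j$. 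This is proved in the paper as a Claim inside $(1)\Rightarrow(2)$ and drives both nontrivial directions.

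The gap in your $(1)\Rightarrow(2)$ is the pigeonhole step. Lemma~\ref{lempw} with $a_0=b_0$ requires a \emph{single} $R\in\{=,>\}$ such that $\varrho_{1a_0(i)}(\max L(\delta,a_0(j)))\ R\ \varrho_{1a_0(j)}(\max L(\delta,a_0(j)))$ holds \emph{simultaneously} for all $i,j<k$. The diagonal $i=j$ forces $R$ to be ``$=$'', so in fact you need equality for every pair $(i,j)$. Your pigeonhole extracts a stationary set of $\delta$ on which the $k\times k$ \emph{pattern of signs} is constant as $\delta$ varies --- but that does nothing to make the pattern constant in $(i,j)$. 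If the fixed pattern has any off-diagonal ``$>$'' or ``$<$'', the hypothesis of Lemma~\ref{lempw} simply fails, and there is no a priori reason the witnesses $a_\delta$ (chosen to match the $Osc$ counts only) should avoid this. The paper's Claim is precisely what repairs this: using coherence, one passes from $\delta$ to an $\eta$ close to $\delta$ (still landing in the relevant stationary set by elementarity) at which the equality version is automatic because the finitely many disagreements between $\varrho_{1a(i)}$ and $\varrho_{1a(j)}$ lie below $\max L(\eta,a(j))$.

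The gap in your $(3)\Rightarrow(1)$ is the asserted strengthening of Lemma~\ref{lem7} from ``stationary $S$'' to ``uncountable $S$''. The proof behind Lemma~\ref{lem7} (via Lemma~\ref{lempw}) needs $\delta=M\cap\omega_1$ for a countable elementary submodel $M$, and an arbitrary uncountable set need not contain any such $\delta$ (e.g.\ a set of successor ordinals). The way the paper actually transfers from stationary to uncountable is again through the Claim: if a $\delta$ arising from an elementary submodel carries a counterexample to (2), then by the Claim a whole tail $(\xi,\delta)$ does too, and an uncountable $A$ must meet such a tail once $\delta$ is taken in $E\cap\operatorname{acc}(A)$. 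Your ``close reading'' claim is therefore an assertion that would itself require proving the Claim, and stating it as an observation leaves the argument incomplete.
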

\begin{proof}
$(1)\Rightarrow (2)$.  Let $\langle \mc{N}_\alpha: \alpha<\omega_1\rangle$ be a continuous $\in$-chain of countable elementary submodels of $H(\aleph_2)$ containing everything relevant. We will show that $E=\{\mc{N}_\alpha\cap \omega_1: \alpha<\omega_1\}$ is as desired.  Suppose towards a contradiction that for some $\delta\in E$ and $a\in [X\setminus \delta]^k$, 
\begin{enumerate}[(i)]
\item $|Osc(\varrho_{1a(i)},\varrho_{1a(j)}; L(\delta, a(j)))|= c_{ij}$  for any $i,j<k$.
\end{enumerate}

\noindent\textbf{Claim.}  There exists $\xi<\delta$ such that for any $\eta\in (\xi, \delta)$,   for any $i,j<k$,
\begin{enumerate}[(I)]
\item $|Osc(\varrho_{1a(i)},\varrho_{1a(j)}; L(\eta, a(j)))|= c_{ij}$;
\item $\varrho_{1a(i)}(\max L(\eta, a(j)))=\varrho_{1a(j)}(\max L(\eta, a(j)))$.
\end{enumerate}
\begin{proof}[Proof of Claim.]
By Fact \ref{f0}, choose $\tau<\delta$ such that for any $i,j<k$,
\begin{enumerate}[(i)]\setcounter{enumi}{1}
\item $\varrho_{1a(i)}\up_{[\tau, \delta)}=\varrho_{1a(j)}\up_{[\tau, \delta)}$;
\item $\tau> L(\delta, a(j))$.
\end{enumerate}

Let $\xi=\min (C_\delta\setminus \tau)$. Fix $\eta\in (\xi, \delta)$ and $i, j<k$. It suffices to prove that (I) and (II) hold for $\eta, i, j$.

Note that $\xi\leq \max (C_\delta\cap \eta)=\min L(\eta, \delta)$. So by (iii) and Fact \ref{f1},
\begin{enumerate}[(i)]\setcounter{enumi}{3}
\item $L(\delta, a(j))<\tau\leq \xi\leq L(\eta, \delta)$ and $L(\eta, a(j))=L(\eta, \delta)\cup L(\delta, a(j))$.
\end{enumerate}

Now (I) follows from (i), (ii) and (iv):
$$|Osc(\varrho_{1a(i)},\varrho_{1a(j)}; L(\eta, a(j)))|=|Osc(\varrho_{1a(i)},\varrho_{1a(j)}; L(\delta, a(j)))|= c_{ij}.$$
Also, (II) follows from (ii) and (iv).
\end{proof}

Assume $\delta=\mc{N}_\nu\cap \omega_1$ for some $\nu$. Then the following statement holds in $\mc{N}_\nu$ and hence in $H(\aleph_2)$:
\begin{enumerate}[(i)]\setcounter{enumi}{4}
\item for any  $\eta>\xi$,  there is $ a'\in [X\setminus \eta]^k $ such that (I), (II) in above claim hold with $ a $ replaced by $ a'.$
\end{enumerate}

Choose,  for each $\eta>\xi$,  $a_{\eta}\in [X\setminus \eta]^k$ witnessing (I) and (II). Then, for some stationary subset $\Gamma\subset (\xi, \omega_1)$, $\ms{B}=\{a_{\eta}: \eta\in \Gamma\}$ is pairwise disjoint. 

But applying Lemma \ref{lempw} to $\ms{B}$, some $\mc{M}\prec H(\aleph_2)$ containing everything relevant such that $\eta=\mc{M}\cap \omega_1\in \Gamma$ and $a_{\eta}$, we obtain an uncountable sub-family of $\ms{B}$ that is a counterexample to (1). This contradiction shows that (2) must be true.\medskip

$(2)\Rightarrow (3)$. Trivial.\medskip

$(3)\Rightarrow (1)$.   Let $\langle \mc{N}_\alpha: \alpha<\omega_1\rangle$ be a continuous $\in$-chain of countable elementary submodels of $H(\aleph_2)$ containing everything relevant and $E=\{\mc{N}_\alpha\cap \omega_1: \alpha<\omega_1\}$. We claim  that (2) holds. Just note that if $\delta\in E$ and $a\in [X\setminus \delta]^k$ is a counterexample to (2), then by the Claim, some $\eta\in A\cap \delta$ and $a$ would be a counterexample to (3). Now (1) follows from Lemma \ref{lem7} and (2).
\end{proof}

\textbf{Remark.} Fix a $C$-sequence. For $X\in [\omega_1]^{\omega_1}$ and $k<\omega$, let $\mc{C}_k(X)$ collect all $\langle c_{ij}: i,j<k\rangle\in \omega^{k\times k}$ with the property that
for all but countably many $\delta<\omega_1$, there exists $a\in [X\setminus \delta]^k$ such that 
$$|Osc(\varrho_{1a(i)},\varrho_{1a(j)}; L(\delta, a(j)))|=c_{ij}\text{ whenever } i, j<k.$$
Then as a consequence of Lemma \ref{lem30}, $\mc{C}_k(X)$ equals the collection of all pattern $\langle c_{ij}: i,j<k\rangle\in \omega^{k\times k}$ that can be realized inside $X$, in the sense that for some uncountable family $\ms{A}\subset [X]^k$ of pairwise disjoint sets, $osc(a(i), b(j))=osc(a(i), b(i))+c_{ij}$ whenever $a<b$ are in $\ms{A}$ and $i,j<k$. Moreover, if $\mc{M}\prec H(\aleph_2)$ is a countable elementary submodel containing the $C$-sequence and $X$,  then
$$\mc{C}_k(X)=\{\langle |Osc(\varrho_{1a(i)},\varrho_{1a(j)}; L(\mc{M}\cap \omega_1, a(j)))| : i ,j<k\rangle : a\in [X\setminus  \mc{M}]^k\}.$$
To see this, $\subset$ follows from countability of $\omega^{k\times k}$ and $\supset$ follows from Lemma \ref{lem30} (2) and elementarity.

\begin{lem}\label{lem28}
  Assume that $\mathcal {N}\prec H({2^\kappa}^+)$ is a countable elementary submodel for some uncountable cardinal $\kappa$, $\mathscr{A}\subset [\omega_1]^k$ is an uncountable family of pairwise disjoint sets in $\mathcal{N}$, $F\in [\omega_1\setminus \mathcal{N}]^{<\omega}$, $\tau\in \mathcal{N}\cap \omega_1$ and $R\in \{=,>\}$. There are countable $\mathcal{N}'\prec H(\kappa^+)$, $\mathscr{A}'\in [\mathscr{A}]^{\omega_1}$ and $\xi\in [\tau,\mathcal{N}'\cap \omega_1)$ such that $\mathscr{A}'\in \mathcal{N}'$, $\mathcal{N}'\in \mathcal{N}$ and for any $a\in \mathscr{A}'\cap \mathcal{N}'$, for any $i<k$, for any $\beta\in F$, $\xi\in L(a(i),\beta)$ and $\varrho_1(\xi,a(i))\ R\ \varrho_1(\xi,\beta)$.
\end{lem}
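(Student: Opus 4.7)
The plan is a two-level elementary-submodel decomposition of the walks. Write $\delta=\mathcal{N}\cap\omega_1$. I pick a countable $\mathcal{N}'\in\mathcal{N}$, $\mathcal{N}'\prec H(\kappa^+)$, with $\mathscr{A},\tau\in\mathcal{N}'$ and $\delta':=\mathcal{N}'\cap\omega_1$ close enough to $\delta$ that every walk $\beta\to a(i)$, for $\beta\in F$ and $a\in\mathscr{A}'\cap\mathcal{N}'$, splits cleanly through $\delta$ and $\delta'$ via two applications of Fact~\ref{f1}(2). The $L$-clause will then be automatic, and the $\varrho_1$-clause will reduce, using coherence, to controlling the walk-weights at a single common point $\xi$ chosen between the two levels.

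Concretely, let $\mu_F=\max\bigcup_{\beta\in F}L(\delta,\beta)<\delta$; by Fact~\ref{f1}(1) there is a tail $(\alpha_0,\delta)$ on which $\min L(\alpha,\delta)>\max(\tau,\mu_F)$. Since $\delta=\sup(\mathcal{N}\cap\omega_1)$, fix $\alpha_0'\in\mathcal{N}\cap\omega_1$ with $\alpha_0'>\alpha_0$; by elementarity of $\mathcal{N}$, choose countable $\mathcal{N}'\in\mathcal{N}$, $\mathcal{N}'\prec H(\kappa^+)$, containing $\mathscr{A},\tau,\alpha_0'$, so that $\delta'>\alpha_0$. Set $\xi=\min L(\delta',\delta)\in[\tau,\delta')$. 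For each $\beta\in F$, $\max L(\delta,\beta)\le\mu_F<\xi$, so Fact~\ref{f1}(2) gives $L(\delta',\beta)=L(\delta',\delta)\cup L(\delta,\beta)$ with maximum $\max L(\delta',\delta)$. Applying Fact~\ref{f1}(1) at $\delta'$, find $\nu_0<\delta'$ with $\min L(\alpha,\delta')>\max L(\delta',\delta)$ for $\alpha\in(\nu_0,\delta')$; pick $\nu'\in\mathcal{N}'\cap\omega_1$ above $\max(\nu_0,\xi)$ and set $\mathscr{A}'=\{a\in\mathscr{A}:a(0)>\nu'\}\in\mathcal{N}'$, which is uncountable. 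For $a\in\mathscr{A}'\cap\mathcal{N}'$, $a\subset\mathcal{N}'\cap\omega_1$ so $a(i)<\delta'$, and $a(i)>\nu'>\nu_0$ gives $\min L(a(i),\delta')>\max L(\delta',\beta)$; a second application of Fact~\ref{f1}(2) then yields $L(a(i),\beta)=L(a(i),\delta')\cup L(\delta',\delta)\cup L(\delta,\beta)\ni\xi$, settling the $L$-clause.

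The $\varrho_1$-clause is the main obstacle. A simple induction on the definition of $L$ shows $\max(C_\eta\cap\delta)\le\max L(\delta,\beta)$ for every $\eta\in Tr(\delta,\beta)\setminus\{\delta\}$; combined with $\max L(\delta,\beta)<\xi$, this forces $C_\eta\cap[\xi,\delta)=\emptyset$ along the upper walk, so the walk $\beta\to\xi$ threads through $\delta$ and $\varrho_1(\xi,\beta)=\max(\varrho_1(\delta,\beta),\varrho_1(\xi,\delta))$. A parallel decomposition through $\delta'$, together with coherence (Fact~\ref{f0}), connects $\varrho_1(\xi,\delta)$ to $\varrho_1(\xi,\delta')$ once $\nu'$ is enlarged past the finite coherence-discrepancy sets between $\varrho_{1\beta}$ and $\varrho_{1\delta}$ ($\beta\in F$) and between $\varrho_{1\delta}$ and $\varrho_{1\delta'}$. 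For $R$ equal to ``$=$'', the remaining task is to handle uniformly the countable family of discrepancy sets between $\varrho_{1a(i)}$ and $\varrho_{1\delta'}$ as $a$ ranges over $\mathscr{A}'\cap\mathcal{N}'$, which is secured by bounding their union below an ordinal in $\mathcal{N}'\cap\omega_1$ via an elementarity argument inside $\mathcal{N}'$. For $R$ equal to ``$>$'', one further thins $\mathscr{A}'$ inside $\mathcal{N}'$ (using a parameter available there to surrogate for $\xi$) to those $a$ whose walk $a(i)\to\xi$ contributes a weight strictly exceeding $\varrho_1(\xi,\beta)$, with the finite-to-one property in Fact~\ref{f0} ensuring that the refined family remains uncountable. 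Achieving these uniformity controls cleanly for all $a\in\mathscr{A}'\cap\mathcal{N}'$ and all $\beta\in F$ simultaneously is where most of the work lies.
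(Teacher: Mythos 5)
Your setup is correct, and in fact coincides with the paper's: $\xi=\min L(\delta',\delta)=\max(C_{\mathcal{N}\cap\omega_1}\cap\mathcal{N}')$, and the $L$-clause is then a routine application of Fact~\ref{f1}(2) (the paper uses it once, decomposing $L(a(i),\beta)$ through $\delta$ alone; you use it twice through $\delta'$ and $\delta$, which also works). The gap is entirely in the $\varrho_1$-clause, which is the substance of the lemma, and it is real.

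Your family $\mathscr{A}'=\{a\in\mathscr{A}:a(0)>\nu'\}$ carries no condition tying $\varrho_1(\xi,a(i))$ to $\varrho_1(\xi,\beta)$, yet the conclusion must hold for \emph{every} $a\in\mathscr{A}'\cap\mathcal{N}'$. Coherence (Fact~\ref{f0}) only says that for each individual $a(i)<\delta'$ the discrepancy set $\{\nu:\varrho_1(\nu,a(i))\neq\varrho_1(\nu,\delta')\}$ is finite; as $a$ ranges over the countably many members of $\mathscr{A}'\cap\mathcal{N}'$ these finite sets move around and may perfectly well each contain $\xi$, and their union is in general unbounded in $\delta'$. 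So there is no ordinal $\zeta\in\mathcal{N}'\cap\omega_1$ ``below which'' to push them, and ``bounding their union via an elementarity argument inside $\mathcal{N}'$'' cannot work as stated. For $R={>}$, the finite-to-one property of $\varrho_{1\beta}$ concerns a fixed upper argument; it does not say that for the particular $\xi$ you fixed, $\{a\in\mathscr{A}:\varrho_1(\xi,a(i))>n\ \forall i\}$ is uncountable.

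The paper's proof closes exactly this gap by building the $\varrho_1$-condition into the \emph{definition} of $\mathscr{A}'$ and certifying uncountability separately, and this is the step missing from your outline. For $R={>}$, it introduces the set $E$ of $\mu$ such that for every $\varepsilon$, $n$ and $\xi'\geq\mu$ there is $a\in\mathscr{A}$ above $\varepsilon$ with $\varrho_1(\xi',a(i))>n$ for all $i$; that $E$ is uncountable is the content of Moore's Lemma~4.4, not a formal consequence of finite-to-one. Choosing $\tau'\in E\cap\mathcal{N}$ above the relevant data and then $\xi>\tau'$ in $C_{\mathcal{N}\cap\omega_1}\cap\mathcal{N}'$, the set $\mathscr{A}'=\{a:\xi<a,\ \forall i\ \varrho_1(\xi,a(i))>\max_{\beta\in F}\varrho_1(\xi,\beta)\}$ is definable over $\mathcal{N}'$ and uncountable because $\tau'\in E$. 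For $R={=}$, the paper first fixes a ``seed'' $a_0\in\mathscr{A}$ with $a_0>\mathcal{N}\cap\omega_1$, chooses $\tau''$ past the finitely many disagreements between $\varrho_{1a_0(i)}$ and $\varrho_{1\beta}$ on $[\cdot,\mathcal{N}\cap\omega_1)$, takes $\xi>\tau''$ in $C_{\mathcal{N}\cap\omega_1}\cap\mathcal{N}'$, and sets $\mathscr{A}'=\{b:\xi<b,\ \forall i\ \varrho_1(\xi,b(i))=\varrho_1(\xi,\beta)\}$; this set is definable over $\mathcal{N}'$ (since $\xi$ and the common integer $\varrho_1(\xi,\beta)$ are in $\mathcal{N}'$), and $a_0\in\mathscr{A}'\setminus\mathcal{N}'$ witnesses uncountability. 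Without such a seed, or without the set $E$, neither case goes through, so as written the proposal does not prove the lemma.
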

\begin{proof}
  First assume that $R$ is $>$. Let $E$ be the collection of all  $\mu<\omega_1$ such that
for any $\varepsilon<\omega_1$, $n<\omega$ and $\xi\in \omega_1\setminus \mu$, there is an $a\in \mathscr{A}\cap [\omega_1\setminus \varepsilon]^k$ with    $\varrho_1(\xi,a(i))>n$ whenever $i<k$.

  Recall that by proof of    \cite[Lemma 4.4]{moore06}, $E$ is uncountable. Note that $E\in \mathcal{N}$ and hence is unbounded in $\mathcal{N}$. Pick $\tau'\in E\cap \mc{N}\setminus \tau$ such that $\tau'> L(\mathcal{N}\cap \omega_1, \beta)$ for any $\beta\in F$.

  Now pick $\mathcal{N}'\in \mathcal{N}$ such that $\mathscr{A}\in \mathcal{N}'$, $\mathcal{N}'\prec H(\kappa^+)$ and $\xi=\max(\mathcal{N}'\cap C_{\mathcal{N}\cap \omega_1})>\tau'$. Let
   $$\mathscr{A}'=\{a\in \mathscr{A}: \xi<a\text{ and for any $i<k$,   }\varrho_1(\xi,a(i))> \max\{\varrho_1(\xi,\beta): \beta\in F\}\}.$$

   We claim that $\mathcal{N}',\mathscr{A}'$ and $\xi$ satisfy the requirement. To see this, fix $a\in \mathscr{A}'\cap \mathcal{N}'$,  $i<k$ and $\beta\in F$. Note that $\max(\mathcal{N}'\cap C_{\mathcal{N}\cap \omega_1})=\xi<a<\mc{N}'\cap \omega_1$. So $\xi=\min L(a(i), \mc{N}\cap \omega_1)$. Together with Fact \ref{f1} and the fact that $L(\mc{N}\cap\omega_1, \beta)<\tau'<\xi$, $ L(a(i), \beta)= L(a(i), \mc{N}\cap \omega_1)\cup  L(\mc{N}\cap \omega_1, \beta)$ and hence $\xi\in L(a(i), \beta)$. And by our definition of $\ms{A}'$, $\varrho_1(\xi,a(i))>\varrho_1(\xi,\beta)$.
   \medskip

  Then we assume that $R$ is $=$. Pick $a\in \mathscr{A}\setminus \mathcal{N}$. Pick $\tau''>\tau$ such that for any $\beta\in F$, for any $i<k$, $\tau''> L(\mathcal{N}\cap \omega_1, \beta)$ and $\varrho_{1\beta}\upharpoonright_{[\tau'',\mathcal{N}\cap \omega_1)}=\varrho_{1a(i)}\upharpoonright_{[\tau'',\mathcal{N}\cap \omega_1)}$. Now pick $\mathcal{N}'\in \mathcal{N}$ such that $\mathscr{A}\in \mathcal{N}'$, $\mathcal{N}'\prec H(\kappa^+)$ and $\xi=\max(\mathcal{N}'\cap C_{\mathcal{N}\cap \omega_1})>\tau''$. Let
  $$\mathscr{A}'=\{b\in \mathscr{A}: \xi<b\text{ and for any $\beta\in F$ and $i<k$, }\varrho_1(\xi,b(i))= \varrho_1(\xi,\beta)\}.$$

 Note $a\in \mathscr{A}'$ and $\mathscr{A}'\in \mathcal{N}'$. Hence $\mathscr{A}'$ is uncountable. Now an argument similar to previous case shows that $\mathcal{N}',\mathscr{A}'$ and $\xi$ are as desired.
\end{proof}

\begin{lem}\label{l14}
  Assume $\mathcal{N}\prec H(\beth_\omega)$,\footnote{Recall that $\beth_0=\aleph_0$, $\beth_{\alpha+1}=2^{\beth_\alpha}$ and $\beth_\alpha=\sup \{\beth_\beta: \beta<\alpha\}$ for limit $\alpha$.}  $F\in [\omega_1\setminus \mathcal{N}]^{<\omega}$, $n<\omega$, $\mathscr{A}\subset [\omega_1]^k$ is an uncountable family of pairwise disjoint sets in $\mathcal{N}$. Then there is an $a\in\mathscr{A}\cap\mathcal{N}$ such that for any $\beta\in F$ and $i,j<k$, $|Osc(\varrho_{1a(i)},\varrho_{1\beta};L(a(j),\beta))|>n$.
\end{lem}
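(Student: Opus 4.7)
The plan is to iterate Lemma \ref{lem28} $2(n+1)$ times with alternating parameter $R\in\{=,>\}$, producing a strictly increasing sequence $\xi^0<\xi^1<\cdots<\xi^{2n+1}$ that simultaneously lies in $L(a(i),\beta)$ for every $\beta\in F$, every $i<k$, and every $a$ in a terminal uncountable subfamily of $\mathscr{A}$ residing in a sufficiently small countable elementary submodel of $\mathcal{N}$. At each $\xi^m$ the relation $\varrho_1(\xi^m,a(i))\,R^m\,\varrho_1(\xi^m,\beta)$ alternates between equality (even $m$) and strict inequality ``$>$'' (odd $m$), and each pair $(\xi^{2l},\xi^{2l+1}]$ will be shown to contribute at least one oscillation.

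Explicitly, set $\mathcal{N}^0=\mathcal{N}$, $\mathscr{A}^0=\mathscr{A}$, $\tau^0=0$, $R^0$ equal to the relation $=$. Since $\mathcal{N}^0\prec H(\beth_\omega)$ and $\beth_\omega$ dominates any finite tower of exponentials, one can fix in advance a descending chain of uncountable cardinals $\kappa_0>\kappa_1>\cdots>\kappa_{2n+1}$ below $\beth_\omega$ with $(2^{\kappa_{m+1}})^+\leq \kappa_m^+$, so that at step $m$ Lemma \ref{lem28} applies inside $\mathcal{N}^m\prec H((2^{\kappa_m})^+)$ to $\mathscr{A}^m, F, \tau^m, R^m$. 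The output is a countable $\mathcal{N}^{m+1}\in\mathcal{N}^m$ with $\mathcal{N}^{m+1}\prec H(\kappa_m^+)$, an uncountable $\mathscr{A}^{m+1}\in[\mathscr{A}^m]^{\omega_1}\cap\mathcal{N}^{m+1}$, and $\xi^m\in[\tau^m,\mathcal{N}^{m+1}\cap\omega_1)$ satisfying, for every $a\in\mathscr{A}^{m+1}\cap\mathcal{N}^{m+1}$, $i<k$, $\beta\in F$,
\[
\xi^m\in L(a(i),\beta)\quad\text{and}\quad \varrho_1(\xi^m,a(i))\,R^m\,\varrho_1(\xi^m,\beta).
\]
Then set $\tau^{m+1}=\xi^m+1$ (which lies in $\mathcal{N}^{m+1}\cap\omega_1$, as the latter ordinal is a limit above $\xi^m$) and flip $R^{m+1}$ to the other relation. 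Note that $F\subset\omega_1\setminus\mathcal{N}^m$ throughout, since $\mathcal{N}^m\cap\omega_1\leq \mathcal{N}\cap\omega_1\leq\min F$.

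Finally pick any $a\in\mathscr{A}^{2n+2}\cap\mathcal{N}^{2n+2}\subseteq\mathcal{N}\cap\mathscr{A}$. For every $\beta\in F$ and $i,j<k$, the ordinals $\xi^0<\xi^1<\cdots<\xi^{2n+1}$ all belong to $L(a(j),\beta)$ and satisfy $\varrho_{1a(i)}(\xi^m)=\varrho_{1\beta}(\xi^m)$ when $m$ is even and $\varrho_{1a(i)}(\xi^m)>\varrho_{1\beta}(\xi^m)$ when $m$ is odd. For each $l\leq n$, listing $L(a(j),\beta)\cap[\xi^{2l},\xi^{2l+1}]$ as $\eta_0<\cdots<\eta_t$ with $\eta_0=\xi^{2l}$ and $\eta_t=\xi^{2l+1}$, the difference $\varrho_{1a(i)}-\varrho_{1\beta}$ starts at $0$ (at $\eta_0$) and ends strictly positive (at $\eta_t$), so some $s\in[1,t]$ witnesses a transition from $\leq 0$ at $\eta_{s-1}$ to $>0$ at $\eta_s$ and contributes an oscillation. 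Since the intervals $(\xi^{2l},\xi^{2l+1}]$ for $l=0,\ldots,n$ are pairwise disjoint, $|Osc(\varrho_{1a(i)},\varrho_{1\beta};L(a(j),\beta))|\geq n+1>n$, as required.

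The main obstacle is the bookkeeping on the descending chain of $H(\cdot)$'s: each application of Lemma \ref{lem28} requires the ambient submodel to be elementary in a specific $H((2^\kappa)^+)$ and only outputs elementarity in the smaller $H(\kappa^+)$, so the $2(n+1)$ descents must fit inside $H(\beth_\omega)$. The elementarity of each $\mathcal{N}^m$ in the intended $H(\cdot)$ can be maintained via absoluteness arguments, or by passing to $\mathcal{N}^m\cap H(\cdot)$ if necessary. Once the chain of submodels is set up, the alternating choice of $R^m$, the increasing nature of the $\xi^m$'s, and the final oscillation count are all routine.
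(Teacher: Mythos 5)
Your proposal is correct and takes essentially the same route as the paper: apply Lemma \ref{lem28} a total of $2n+2$ times, alternating the relation $R$ between $=$ and $>$, choose the $\tau$'s so that the resulting $\xi$'s strictly increase, and then count at least one oscillation in each of the $n+1$ disjoint intervals bracketed by an $=$-point and the next $>$-point. The paper states the iteration more tersely (``choose $\tau$ large enough each time'') and does not spell out the descending chain of $H(\cdot)$'s; your explicit cardinal bookkeeping with $\kappa_0 > \cdots > \kappa_{2n+1}$ and passing to $\mathcal{N}^m\cap H(\cdot)$ is exactly the implicit reason the hypothesis uses $H(\beth_\omega)$ rather than a single fixed $H(\theta)$.
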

\begin{proof}
   Applying previous lemma $2n+2$ times (choose $\tau$ large enough each time) we  get $\mathcal{N}_{2n+2}\in ...\in \mathcal{N}_1\in \mathcal{N}_0=\mathcal{N}$, $\mathscr{A}_{2n+2}\subset...\subset \mathscr{A}_1\subset\mathscr{A}_0=\mathscr{A}$ and $\xi_1<\xi_2<...<\xi_{2n+2}$ such that for any $m\in\{1,2,...,2n+2\}$, for any $a\in \mathscr{A}_{m}\cap \mathcal{N}_m$, $i<k$ and $\beta\in F$, $\xi_m\in L(a(i),\beta)$ and $\varrho_1(\xi_m,a(i))\ R\ \varrho_1(\xi_m,\beta)$ where $R$ is $=$ for $m$ odd and $R$ is $>$ for $m$ even.

  Now it is easy to see that for any $a\in \mathscr{A}_{2n+2}\cap \mathcal{N}_{2n+2}$, for any $i,j<k$, for any $\beta\in F$, $|Osc(\varrho_{1a(i)},\varrho_{1\beta};L(a(j),\beta))|>n$.
\end{proof}

\begin{proof}[Proof of Lemma \ref{l15}]
We first present the definition of the forcing.

\begin{defn}
  $\mathcal {P}$ consists of pairs $(c,F)\in[\omega_1]^{<\omega}\times[\omega_1]^{<\omega}$ such that for any $\eta\in c$, for any $\alpha,\beta\in F\setminus \eta$, either $|Osc(\varrho_{1\alpha},\varrho_{1\beta};L(\eta,\beta))|\neq 1$ or $|Osc(\varrho_{1\beta},\varrho_{1\alpha};L(\eta,\alpha))|\neq 1$. The forcing relation is defined by $(c,F)\leq (c',F')$ if $c\supset c'$ and $F\supset F'$.
\end{defn}

It is clear that the cardinality of $\mathcal{P}$ is $\omega_1$.\medskip

\noindent\textbf{Claim 1.}  $\mathcal{P}$ is proper.

\begin{proof}[Proof of Claim 1.]
  Fix a countable $\mathcal{M}\prec H(\kappa)$ containing $\mathcal{P}$ and $(c,F)\in \mathcal{M}\cap \mathcal{P}$. Let $\delta=\mathcal{M}\cap \omega_1$. It suffices to show that $(c\cup\{\delta\},F)$ is $(\mathcal{M},\mathcal{P})$-generic. Fix $D\in \mathcal{M}$ dense open and $(c',F')\in D$ stronger than $(c\cup\{\delta\},F)$. Fix partitions $c'=c_1\cup c_2$ and $F'=F_1\cup F_2$ where $c_1=c'\cap \mathcal{M}$ and $F_1=F'\cap \mathcal{M}$. Pick $\tau\in C_\delta$ such that for any $\beta\in F_2$, $\tau> L(\delta, \beta)\cup c_1\cup F_1$ and $\varrho_{1\beta}\upharpoonright_{[\tau,\delta)}=\varrho_{1\delta}\upharpoonright_{[\tau,\delta)}$.

  Let $\mathscr{A}$ collect all $c''\cup F''>\tau$ such that 
  \begin{enumerate}
  \item the order type of $c''$ (resp. $F''$) in $c''\cup F''$ is the same as the  order type of $c_2$ (resp. $F_2$) in $c_2\cup F_2$, i.e., $(c'', F'', <)$ is isomorphic to $(c_2, F_2, <)$;
  \item  $(c_1\cup c'',F_1\cup F'')\in D$;
  
  \item for any $\eta\in c_1$, for any $i,j<|F_2|$, $L(\eta,F''(i))=L(\eta,F_2(i))$ and \hfill \break 
  $\varrho_{1F''(i)}\upharpoonright_{L(\eta,F''(j))}=\varrho_{1F_2(i)}\upharpoonright_{L(\eta,F''(j))}$.
   \end{enumerate}

  Although $\mathscr{A}$ itself is not pairwise disjoint, it contains an uncountable pairwise disjoint subfamily. By Lemma \ref{l14}, we can find $c''\cup F''\in \mathscr{A}\cap \mathcal{M}$ such that
  \begin{enumerate}\setcounter{enumi}{3}
  \item for any $\eta\in c'', \alpha \in F'', \beta\in F_2$, $|Osc(\varrho_{1\alpha},\varrho_{1\beta};L(\eta,\beta))|>1$.
  \end{enumerate}

 Note by definition of $\mathscr{A}$,  $(c_1\cup c'',F_1\cup F'')\in D$.

  Now we just need to check that $(c_1\cup c''\cup c_2, F_1\cup F''\cup F_2)$ is a condition. Fix $\eta\in c_1\cup c''\cup c_2$ and $\alpha<\beta$ in $(F_1\cup F''\cup F_2)\setminus \eta$. It suffices to show that either $|Osc(\varrho_{1\alpha},\varrho_{1\beta};L(\eta,\beta))|\neq 1$ or $|Osc(\varrho_{1\beta},\varrho_{1\alpha};L(\eta,\alpha))|\neq 1$.\medskip

  \textbf{Case 1}: $\eta\in c_2$.\medskip

  Then $\alpha,\beta\in F_2$. So the satisfaction is inherited from condition $(c', F')$.\medskip

  \textbf{Case 2}: $\eta\in c_1$.\medskip

  \textbf{Subcase 2.1}: $\alpha,\beta\in F_1\cup F_2$ or $\alpha,\beta\in F_1\cup F''$.\medskip

  Trivial.\medskip

  \textbf{Subcase 2.2}: $\alpha\in F''$ and $\beta\in F_2$.\medskip

 Let $\alpha'\in F_2$ be in the same position as $\alpha$ in $F''$. 
 First assume that $\alpha'=\beta$. By $(3)$, $|Osc(\varrho_{1\alpha},\varrho_{1\beta};L(\eta,\beta))|=0$. 
 
 Then assume that $\alpha'\neq \beta$. By (3), $|Osc(\varrho_{1\alpha},\varrho_{1\beta};L(\eta,\beta))| $ $=|Osc(\varrho_{1\alpha'},\varrho_{1\beta};L(\eta,\beta))|$ and $|Osc(\varrho_{1\beta},\varrho_{1\alpha};L(\eta,\alpha))|=|Osc(\varrho_{1\beta},\varrho_{1\alpha'};L(\eta,\alpha'))|$. By forcing condition $(c', F')$ again, one of them does not equal 1.
 \medskip

  \textbf{Case 3}: $\eta\in c''$.\medskip

  \textbf{Subcase 3.1}: $\alpha,\beta\in F''$.\medskip

  Trivial.\medskip

  \textbf{Subcase 3.2}: $\alpha\in F''$ and $\beta\in F_2$.\medskip

 By (4), $|Osc(\varrho_{1\alpha},\varrho_{1\beta};L(\eta,\beta))|>1$.\medskip

  \textbf{Subcase 3.3}: $\alpha,\beta\in F_2$.\medskip
  
  Recall that $L(\delta,\beta)<\tau<\eta$ and $\tau\in C_\delta$. So $L(\delta,\beta)<\tau\leq L(\eta,\delta)$. By Fact \ref{f1},  $L(\eta,\beta)= L(\eta,\delta)\cup L(\delta,\beta)$.  
Recall also that $\varrho_{1\alpha}\upharpoonright_{[\tau,\delta)}=\varrho_{1\beta}\upharpoonright_{[\tau,\delta)}$.   
So 
$Osc(\varrho_{1\alpha},\varrho_{1\beta};L(\eta,\beta))= Osc(\varrho_{1\alpha},\varrho_{1\beta};L(\delta,\beta))$ and $Osc(\varrho_{1\beta},\varrho_{1\alpha};L(\eta,\alpha)) = Osc(\varrho_{1\beta},\varrho_{1\alpha};L(\delta,\alpha)).$ Since $\delta\in c'$ and $\alpha, \beta\in F'$, one of them does not equal 1.
\end{proof}

Now suppose that $G$ is  a generic filter for $\mathcal{P}$. Define
$$C=\bigcup\{c: (c,F)\in G\text{ for some }F\},$$
$$X=\bigcup\{F: (c,F)\in G\text{ for some }c\}.$$
The forcing condition guarantees the following.
\begin{enumerate}\setcounter{enumi}{4}
\item For any $\delta\in C$, for any $\alpha,\beta\in X\setminus \delta$, either $|Osc(\varrho_{1\alpha},\varrho_{1\beta};L(\delta,\beta))|\neq 1$ or $|Osc(\varrho_{1\beta},\varrho_{1\alpha};L(\delta,\alpha))|\neq 1$. 
\end{enumerate}

We may assume that $X$ is uncountable. To see this, fix countable $\mathcal{M}\prec H(\kappa)$ containing everything relevant and $\alpha\in \omega_1\setminus \mc{M}$. By the proof of Claim 1, $(\{\mathcal{M}\cap \omega_1\},\{\alpha\})$ is $(\mathcal{M},\mathcal{P})$-generic. We claim that  $(\{\mathcal{M}\cap \omega_1\},\{\alpha\})\Vdash \dot{X}$ is uncountable. Suppose otherwise, some $r< (\{\mathcal{M}\cap \omega_1\},\{\alpha\})$ forces $\sup \dot{X}<\omega_1$. Then by $(\mathcal{M},\mathcal{P})$-genericity, there are $r'<r$ and $\beta\in \mc{M}\cap \omega_1$ such that $r'\Vdash \sup \dot{X}=\beta$ (see, e.g., \cite[Lemma 31.6]{jech03}). But this contradicts the fact that $\alpha\geq \mc{M}\cap \omega_1>\beta$ and $(\{\mathcal{M}\cap \omega_1\},\{\alpha\})$ and hence $r'$ forces $\alpha\in \dot{X}$.
Now choose $G$ containing $(\{\mathcal{M}\cap \omega_1\},\{\alpha\})$.\medskip



Repeating above argument if necessary, we may assume that $C$ is uncountable. Then Lemma \ref{l15} follows from (5) and Lemma \ref{lem30}.
\end{proof}

\begin{proof}[Proof of Theorem \ref{t11}]
Start from a model of GCH. Define an $\omega_2$-step, countable support iterated forcing to deal with all $C$-sequences via a book-keeping way. For each single set and any pre-assigned $C$-sequence, we force with the forcing from Lemma \ref{l15}.  By Lemma \ref{lem30}, the conclusion of Lemma \ref{l15} is upward absolute between models with the same $\omega_1$.  A standard argument shows that the forcing is proper and $\omega_2$-cc and thus preserves cardinals. Thus in the final model, all $C$-sequences satisfy the conclusion of Theorem \ref{t11}.
\end{proof}

 \section*{Acknowledgement}
 We thank the referees for their careful reading of the manuscript and suggestions that improve the exposition.

\bibliographystyle{plain}

\end{document}